\newcommand{\N}{{\mathbb N}}
\newcommand{\R}{{\mathbb R}}
\newcommand{\g}{\mathfrak{g}}
\newcommand{\m}{\mathfrak{m}}
\newcommand{\f}{\mathfrak{f}}
\newcommand{\h}{\mathfrak{h}}
\newcommand{\rr}{\la r \ra}
\newcommand{\ttt}[1]{\tilde{#1}}
\newcommand{\bam}{\beta_{\approx m}}
\newcommand{\ang}{{\not\negmedspace\nabla}}
\newcommand{\la}{\langle}
\newcommand{\ra}{\rangle}
\newcommand\reallywidehat[1]{%
	\savestack{\tmpbox}{\stretchto{%
			\scaleto{%
				\scalerel*[\widthof{\ensuremath{#1}}]{\kern-.6pt\bigwedge\kern-.6pt}%
				{\rule[-\textheight/2]{1ex}{\textheight}}
			}{\textheight}%
		}{0.5ex}}%
	\stackon[1pt]{#1}{\tmpbox}%
}
\newcommand{\boxg}{\Box_\mathfrak{g}}
\newcommand{\parder}[2]{\frac{\partial #1}{\partial #2}}
\newcommand{\dd}{\hbox{ }d}
\newcommand{\LE}{{\mathcal {LE}}}
\newcommand{\LEs}{{\mathcal {LE}^*}}
\newcommand{\ls}[1]{\ell^1S(r^{#1})}
\newcommand{\sr}[1]{S_{rad}(r^{#1})}
\newcommand{\albe}{{\alpha\beta}}
\newcommand{\gade}{{\gamma\delta}}
\newcommand{\LEnorm}[1]{\sup_{m}\|\la r \ra^{-\frac{1}{2}}#1\|_{L^2(\R_+ \times A_m)}}
\newcommand{\LEonorm}[1]{ \|\partial #1\|_{LE} + \|\la r \ra^{-1}#1\|_{LE}}
\newcommand{\LEsnorm}[1]{ \sum_m \|\la r \ra^{\frac{1}{2}}#1\|_{L^2(\R_+ \times A_m)}}
\newcommand{\lesn}[1]{_{\mathcal{LE}^{*,#1}}}
\newcommand{\letn}[1]{_{\mathcal{LE}_\tau^{#1}}}
\newcommand{\len}[1]{_{\mathcal{LE}^{#1}}}
\newcommand{\lrsupo}{_{L_r^2L_\omega^\infty(A_m)}}
\newcommand{\lrlo}{_{L_r^2L_\omega^2(A_m)}}
\newcommand{\ltwoam}{_{L^2(A_m)}}
\newcommand{\ltwor}{_{L^2(\R^3)}}
\newcommand{\les}{_{\mathcal{LE}^*}}
\newcommand{\vijk}{v_{ijk}}
\newcommand{\vleijk}{v_{\le i \le j \le k}}
\newcommand{\vmlow}{v_m^{low}}
\newcommand{\vmhigh}{v_m^{high}}
\newcommand{\vf}{T^i\Omega^jS_r^k}
\newcommand{\supn}[1]{\sup_{i+j+k \le #1}}
\newcommand{\enitr}{e^{-i\tau\rr}}
\newcommand{\drfracr}{(\partial_r +\frac{1}{r})}
\newcommand{\bm}{\beta_{\approx m}}
\newcommand{\rx}{|x|}
\newcommand{\ry}{|y|}
\newcommand{\znl}[2]{_{Z^{#1, #2}}}
\renewcommand{\l}{\lambda}
\newcommand{\el}{E_\l^{\nu}}
\newcommand{\gl}{g_\l^{\nu}}
\newcommand{\eln}[2]{E_{#1}^{#2}}
\newcommand{\gln}[2]{g_{#1}^{#2}}
\newcommand{\zln}[2]{\zeta_{#1}^{#2}}
\newcommand{\tvk}{\tilde{\varphi}_{\kappa+1}}
\renewcommand{\epsilon}{\varepsilon}
\newtheorem{theorem}{Theorem}[section]
\newtheorem{lemma}[theorem]{Lemma}
\newtheorem{proposition}[theorem]{Proposition}
\newtheorem{definition}[theorem]{Definition}
\begin{document}

\title[Local Decay in the Asymptotically Flat Stationary Setting]{The effect of metric behavior at spatial infinity on pointwise wave decay in the asymptotically flat stationary setting.}
\author{Katrina Morgan}
\address{Northwestern University, Evanston, IL}
\email{katrina.morgan@northwestern.edu}
\thanks{The author was supported in part by the NSF under Grant No. 1440140 while the author was in residence at the Mathematical Sciences Research Institute in Berkeley, California}

\begin{abstract}
The current work considers solutions to the wave equation on asymptotically flat, stationary, Lorentzian spacetimes in (1+3) dimensions. We investigate the relationship between the rate at which the geometry tends to flat and the pointwise decay rate of solutions. The case where the spacetime tends toward flat at a rate of $|x|^{-1}$ was studied in \cite{tat2013}, where a $t^{-3}$ pointwise decay rate was established. Here we extend the result to geometries tending toward flat at a rate of $|x|^{-\kappa}$ and establish a pointwise decay rate of $t^{-\kappa-2}$ for $\kappa \in \N$ with $\kappa \ge 2$. We assume a weak local energy decay estimate holds, which restricts the geodesic trapping allowed on the underlying geometry. We use the resolvent to connect the time Fourier Transform of a solution to the Cauchy data. Ultimately the rate of pointwise wave decay depends on the low frequency behavior of the resolvent, which is sensitive to the rate at which the background geometry tends to flat.
\end{abstract}

\maketitle


\begin{section}{Introduction}
This work examines the effect of the  far away metric behavior on pointwise wave decay on asymptotically flat, stationary backgrounds in $(1+3)$ dimensions. A spacetime geometry is asymptotically flat if the metric coefficients tend toward the flat Minkowski metric ($\mathfrak{m} = \hbox{diag}(-1,1,1,1)$ in $(t,x)$ coordinates) as $r \to \infty$. Here and throughout the paper we take $r := |x|$. A geometry is said to be stationary if the metric coefficients are time independent. The main result of this paper quantifies the relationship between the rate at which the background geometry tends to flat and the rate of pointwise wave decay.

The pointwise decay rates established in this work interpolate between two known cases: the flat Minkowski spacetime and asymptotically flat spacetimes tending toward flat at a rate of $r^{-1}$. Table 1 summarizes these results, which hold for compactly supported initial data. In the case of \cite{tat2013} and the current work, the assumptions on the initial data can be weakened.
\begin{table}
	 		\begin{center}
		\begin{tabular}{|c|c|c|}
			\hline
			 & \textsc{Metric Behavior} & \textsc{Pointwise Wave Decay} \\
			\hline
			\cite{tat2013}: & $\mathfrak{g} = flat + \mathcal{O}(r^{-1})$ & $ |u(t,x)| \lesssim_x t^{-3}$ \\ 
			\hline
			  \textbf{Current Work}: & $\mathfrak{g} = flat + \mathcal{O}(r^{-\kappa})$ & $|u(t,x)| \lesssim_x t^{-\kappa-2}$  \\
			 \hline
			  Sharp Huygens': & $\mathfrak{g} = flat$ & $|u(t,x)| \lesssim_x t^{-\infty}$ \\
			 \hline
		\end{tabular}
		\end{center}
	\caption{Summary of Decay Rates}
	\label{table:1}
	\end{table}

Since we are working in three spatial dimensions, sharp Huygens' principle says that solutions to the wave equation on the flat Minkowski spacetime decay all the way to 0 in finite time at each point in space. In Table \ref{table:1} we make a weaker statement that solutions to the wave equation have arbitrarily fast polynomial time decay. 

The results in \cite{tat2013} and the current work rely on an assumption that a dispersive estimate called \textit{weak local energy decay} holds. Heuristically, this estimate imposes sufficient restrictions on the behavior of the geometry within compact regions so only the long range metric behavior is left to be studied. A more detailed summary of local energy decay is provided later in the introduction.

Asymptotically flat spacetimes arise in general relativity, which has motivated a variety of mathematical questions about wave behavior in this setting. For example, the Schwarzschild metric describing the geometry of space in the presence of a single, non-rotating black hole and the Kerr metric describing spacetime in the presence of a single, axially symmetric, rotating black hole both tend toward flat at a rate of $r^{-1}$. A conjecture posited by physicist Richard Price in the 1970's in \cite{price}, known as Price's Law, predicted a $t^{-3}$ decay rate for waves  on the Schwarzschild metric. The question of proving Price's law was explored in \cite{tat2013} and also in \cite{donn}, where they analyze the wave behavior via spherical modes using the spherical symmetry of the Schwarzschild metric. Pointwise decay rates for the Kerr spacetime were studied in \cite{dafrod} and \cite{finster}. In \cite{metato} the authors proved Price's Law for non-stationary asymptotically flat spacetimes and established the $t^{-3}$ decay rate for a class of perturbations of the Kerr spacetime. The techniques in \cite{donn}, \cite{tat2013}, and the current work involve taking the Fourier transform in time and therefore do not readily extend to non-stationary geometries. 
	
Weak local energy decay on the Schwarzschild geometry was established in \cite{blue2003}, \cite{dafermos2009red}, and \cite{mametato}. For the Kerr spacetime with low angular momentum, weak local energy decay estimates were proved in \cite{andblue15}, \cite{dafermos2009red}, and \cite{dafrodshlap16}. The assumptions in \cite{tat2013} therefore hold for Schwarzschild and Kerr with low angular momentum. A major challenge in obtaining local energy estimates for the Kerr and Schwarzschild geometries is the presence of trapping, in which a portion of the wave flow remains within a fixed set. 
	
A natural question arising from Tataru's result in \cite{tat2013} is: What aspects of the Schwarzschild geometry dictate Price's Law? There are three locations that are a priori suspected to affect this decay rate: the event horizon, the photon sphere, and the behavior of the perturbation at spatial infinity.  The current work shows that the metric behavior at spatial infinity dictates the pointwise decay rate of waves when the weak local energy estimate holds. On the Schwarzschild background, trapping occurs in two areas called the event horizon and the photon sphere. The trapping at the event horizon has been shown to be trivial due to what is known as the red-shift effect, which guarantees energy decay along the trapped rays (\cite{dafrod}, \cite{dafermos2009red}). The photon sphere corresponds to a fixed radius, and rays initially tangential to this surface remain there for all time. The behavior on Kerr backgrounds is more complicated. The trapping at the event horizon is similarly known to be trivial, but the other trapped set does not occur on a fixed radius and can only be described in phase space. In order to deal with trapping, a weak local energy estimate with zero coefficients on the trapped set is often introduced. If this holds, then one obtains local energy estimates on the trapped set with a derivative loss. Our definition of the weak local energy decay estimate includes this derivative loss.
	
Questions similar to the aim of this paper were studied in \cite{bonyhaf2} and \cite{bonyhaf1} where the authors established local decay rates for waves on asymptotically flat, stationary spacetimes which tend toward flat at different rates. There are several key differences compared with the current work. First, we handle full Lorentzian perturbations of flat Minkowski space rather than restricting to perturbations of the Laplacian. This leads to the metrics considered in this paper containing $dtdx_i$ terms, which results in mixed space-time differential operators in our wave operator. Second, we allow for the possibility of unstable trapping on our background. In \cite{bonyhaf2} and \cite{bonyhaf1}, a nontrapping assumption is used in order to obtain decay for the high frequency part of a solution to the wave equation (it is not needed for the low frequency part). Third, our result improves upon the established decay rates. Finally we note that \cite{bonyhaf1} considers $(1+n)$ dimensional geometries for $n \ge 2$ and \cite{bonyhaf2} considers $n$ odd with $n \ge 3$. The current work only studies $(1+3)$ dimensional spacetimes.
	
\subsection{The Wave Equation}
			The flat wave operator is given by
	\[ \Box = -\partial_t^2 + \sum_{i=1}^3 \partial_{x_i}^2 = - \partial_t^2 + \Delta_x \]
	where $\Delta_x$ is the spatial Laplacian. Throughout the paper we write $\Delta = \Delta_x$. Similarly we write $\nabla = \nabla_x$ for the spatial gradient. When both time and spatial derivatives are considered we use $\partial$.
	 
The wave operator associated to a Lorentzian metric $\mathfrak{g} = \mathfrak{g}_{\alpha \beta}d\alpha d\beta$ with signature (3,1) is given by
	\begin{equation} \label{boxg}
		\Box_\mathfrak{g} = \frac{1}{\sqrt{|\mathfrak{g}|}}\partial_{\alpha} \sqrt{|\mathfrak{g}|}\mathfrak{g}^{\alpha \beta}\partial_{\beta}
	\end{equation}	  
where $|\mathfrak{g}|$ is the determinant of the matrix associated to the metric, and $\alpha$ and $\beta$ are summed over both time and space dimensions. We use Latin indices $i,j$ to indicate only spatial dimensions are being considered and Greek indices to indicate both space and time dimensions are being considered. When we wish to specify Cartesian vs. spherical coordinates, we use $\alpha, \beta$ for Cartesian and $\gamma, \delta$ for spherical coordinates.

The flat metric (i.e. the Minkowski metric) is given in rectangular coordinates by
	\begin{equation} 
		\mathfrak{m} = - dt^2 + \sum_{i=1}^3 dx_i^2. 
	\end{equation}
Taking $\mathfrak{g}=\mathfrak{m}$ in \eqref{boxg} thus yields $\Box_\mathfrak{m} = \Box$, as one would expect.

\subsection{Energy Estimates}
	We are interested in the Cauchy problem
	\begin{equation} 
		(\boxg + V)u = f, \qquad u(0,x) = u_0, \qquad \partial_t u(0,x) = u_1 		
		\label{inhcauchy}
	\end{equation}
	where $V$ is a scalar potential. The assumptions placed on $\mathfrak{g}$ and $V$ are given in Section \ref{mainthm}. The Cauchy data at time $t$ is denoted $u[t] = \Big(u(t,\cdot), \partial_tu(t,\cdot)\Big)$.

		\begin{definition}	
			We say the evolution \eqref{inhcauchy} satisfies the uniform energy bounds if:
				\begin{equation} \label{unifenbd}
					\| u[t]\|_{\dot H^{k,1} \times H^k} \leq c_k (\|u[0]\|_{\dot H^{k,1} \times H^k} +\|f\|_{H^k L^1}),
	\qquad t \geq 0, \quad k \geq 0.
				\end{equation}
		\end{definition}	
		
	Here $H^k$ denotes the usual Sobolev space, and we say $\phi \in \dot{H}^{k,1}$ if $\nabla \phi \in H^k$.
	
	\subsubsection{Local Energy Decay}
		Local energy decay estimates originated in the work of Morawetz (\cite{mora}) where the author established a dispersive estimate for solutions to the flat wave equation. In \cite{kss} the authors presented a new approach for proving existence of solutions for nonlinear waves which relied on obtaining a Morawetz-type estimate. The use of local energy estimates has since become a standard tool for studying nonlinear wave equations (e.g. \cite{bonyhaf3}, \cite{hmssz}, \cite{metnaksog}, \cite{sogwan}, \cite{metsog06}, \cite{lintoh18}, \cite{luk2010null}, \cite{yang2015global}, among many others). 
		
		The original Morawetz estimate considered a solution $u$ to the homogeneous flat wave equation with initial data $u_0, u_1$ and states
	\[ \int_0^t \int_{\R^3} \frac{1}{|x|} |\ang u|^2 (t,x) \dd t dx \lesssim \| \nabla u_0\|_{L^2}^2 + \|u_1\|_{L^2}^2. \]
    Restricting to compact regions in space, one is able to obtain similar bounds on $u$ and its derivatives (see e.g. \cite{kss}, \cite{smithsogge}, and \cite{sterb}). Our definitions for the local energy norms will restrict to dyadic spatial regions. We use $\rr$ to indicate a smooth function of $r$ such that $\rr \ge 1$ and $\rr = r$ for $r >2$, and we define $A_m := \{ x : 2^{m} \le \la r \ra \le 2^{m+1} \}$. One benefit of using these dyadic regions is that $r \approx 2^m$ on the region of integration, so the weights in the local energy norm can roughly be treated as constant within the region of integration.

		   The local energy norm we use is defined by
			\[
				\|u\|_{LE} = \LEnorm{u}.
			\]
		Its $H^1$ analogue is given by
			\[
				\|u\|_{LE^1} = \LEonorm{u},
			\]
		and the dual norm is given by
			\[
				\|f\|_{LE^*} = \LEsnorm{f}.
			\]
		For functions with higher regularity we define the following norms
			\[
				\|u\|_{LE^{1,N}} = \sum_{j \le N} \|\partial^j u\|_{LE^1}, \qquad \|f\|_{LE^{*,N}} = \sum_{j \le N} \|\partial^j f\|_{LE^*}.
			\]
		The spatial counterparts of the $LE$ and $LE^*$ space-time norms are
			\[
				\|v\|_\LE = \sup_m \| \la r \ra^{-\frac{1}{2}} v\|_{L^2(A_m)}; \qquad \|g\|_\LEs = \sum_m \| \la r \ra^{\frac{1}{2}} g\|_{L^2(A_m)}
			\]
		with the higher regularity norms defined by
			\[
				\|v\|_{\mathcal{LE}^{N}} = \sum_{j \le N} \|\nabla^jv\|_{\mathcal{LE}}, \qquad \|g\|_{\mathcal{LE}^{*,N}} = \sum_{j \le N} \|\nabla^jg\|_\LEs.
			\]			

		\begin{definition}
			We say the evolution \eqref{inhcauchy} satisfies the local energy decay estimate if:
				\begin{equation} \label{led}
	 				\| u\|_{LE^{1,N}} \leq c_N (\|u[0]\|_{H^{N,1} \times H^N} + \|f\|_{LE^{*,N}}), \qquad N \geq 0.
				\end{equation}
		\end{definition}
	
	Heuristically, the local energy decay estimate holds if the underlying geometry allows waves to spread out enough so that the energy within compact spatial regions decays sufficiently quickly to be integrable in time. Local energy decay has been used to establish other dispersive estimates such as Strichartz estimates (global, mixed norm estimates) in \cite{mettat}, \cite{toh}, and \cite{mametato} and pointwise estimates in \cite{dafrod2}, \cite{metato}, \cite{tat2013} (among others). 
	
	The local energy decay estimate is known to hold in several nontrapping geometries. For sufficiently small perturbations of flat space without trapping, local energy decay was established in \cite{alinhac}, \cite{metsog06}, and \cite{mettat1}. The case of stationary product manifolds was considered in \cite{burq1998}, \cite{bonyhaf1}, and \cite{sogwan}. The nontrapping case was studied more generally in \cite{mst}. If trapping occurs then the local energy decay estimate does not hold (\cite{ralston}, \cite{sbierski2015characterisation}).

	\subsubsection{Weak Local Energy Decay}
		Trapping on the background geometry may be stable or unstable. A spacetime with trapping where every trapped geodesic is unstable may still admit a weaker form of the local energy decay estimate. In the case of trapping, there is necessarily a loss of derivatives on the right hand side of the estimate (see e.g. \cite{bcmp}). 	
			\begin{definition}
	 		We say the evolution \eqref{inhcauchy} satisfies the weak local energy decay estimate if:
				\begin{equation} \label{wled}
	 				\| u\|_{LE^{1,N}} \leq c_N (\|u[0]\|_{\dot H^{N+3,1} \times H^{N+3}} + \|f\|_{LE^{*,N+3}} ), \qquad N \geq 0.
				\end{equation}
		\end{definition}

\subsection{Vector Fields and Weighted Sobolev Spaces}
	Our argument will use vector field methods. Specifically we are interested in the vector fields:
	\begin{itemize}
		\item Rotations: $\Omega = \{\Omega_{ab} \hbox{ } | \hbox{ } a, b = 1,2,3\}$ where $\Omega_{ab} = x_a\partial_b - x_b\partial_a$.
		\item Translations: $T = \{T_a \hbox{ }|\hbox{ } a = 1,2,3  \}$ where $ T_a = \nabla_a$.
		\item Scaling: $S = S_r - S_{\tau}$ where $ S_r = r\partial_r$ and $ S_{\tau} = \tau \partial_\tau.$
	\end{itemize}
Note that the scaling vector field we use is taken in time frequency space and therefore differs from the scaling vector field in physical space which is given by $r\partial_r + t \partial_t$. This is because we use the vector field arguments only on the time Fourier transform side. We denote the collection of all such vector fields by $\Gamma = \{ \Omega, T, S \} $. We write $\Gamma^{<n}$ to denote a linear combination of $\Gamma^\alpha$ for $|\alpha| < n$: $\Gamma^{<n} := \sum_{|\alpha| < n} c_\alpha \Gamma^\alpha$.
		
	We use the vector fields to define a weighted Sobolev type norm. We will assume the initial data lies in such a space. The weighted Sobolev spaces $Z^{n,q}$ are defined by
			\begin{equation}
				\| \phi \|_{Z^{n,q}} = \sup_{i + j + k \le n} \| \la r \ra^q T^i \Omega^j S_r^k \phi \|_{\LEs}.
			\end{equation}

\subsection{Symbol Classes}
	We will assume that the metric coefficients of the background geometry belong to certain symbol classes.  
	
	The symbol classes $S(r^q), \ell^1S(r^q), S(\log r)$ are defined as follows:
			\begin{align*}
				&\phi(x) \in S(r^q)  \Leftrightarrow \|\la r \ra^{j-q} \partial^j f(x) \|_{L^{\infty}(\R^3)} \lesssim_j 1 \qquad j \in \{ 0, 1, 2, \dots\} \\
				&\phi(x) \in \ell^1 S(r^q)  \Leftrightarrow \sum_m 2^{m(j-q)}  \| \partial^j f(x) \|_{L^{\infty}(A_m)} \lesssim_j 1 \qquad j \in \{ 0, 1, 2, \dots\} \\
				&\phi(x) \in S(\log r) \Leftrightarrow \|(\log\la r \ra)^{-1} f(x) \|_{L^{\infty}(\R^3)} \lesssim 1 \hbox{ and } \|\la r \ra^j \partial^j f(x) \|_{L^{\infty}(\R^3)} \lesssim_j 1,\\
				& \qquad \qquad \qquad \qquad \qquad \qquad \qquad \qquad \qquad \qquad \qquad \qquad j \in \{1, 2, 3, \dots\}.
			\end{align*}
		If $\phi \in S(r^q)$ is radial, we write $\phi \in S_{rad}(r^q)$. We indicate radial functions in the other symbol classes analogously. 
		
	In some of our calculations we use the notation $\rho_\ell^q$ to indicate a representative of the symbol class $\ell^1S(r^q)$. Similarly, we use $\rho^q$ to represent $S(r^q)$ and $\rho_r^q$ to represent $S_{rad}(r^q)$. We allow $\rho_\ell^q$, $\rho_r^q$, and $\rho^q$ to stand for different functions at each appearance.

\subsection{Statement of Main Theorem} \label{mainthm}
	
	We consider a Lorentzian metric $\g$ with the following properties:
	
	\noindent \underline{Metric Assumptions}
	\begin{enumerate}
		\item $\g$ is stationary (i.e. the metric coefficients are time independent).
		\item The submanifolds $t = constant$ are space-like (i.e. the induced metric on the spatial submanifolds is positive definite).
		\item Let $\kappa \in \N$ with $\kappa \ge 2$. The metric $\g$ is asymptotically flat in the sense that $\mathfrak{g}$ can be written as
	\[ \g = \mathfrak{m} + \mathfrak{f} + \mathfrak{h} \]
where
	\[\mathfrak{f} = \mathfrak{f}_{00}(x)dt^2 + \mathfrak{f}_{0i}(x)dtdx_i +  \mathfrak{f}_{ij}(x)dx_idx_j \]
	with $\mathfrak{f}_{\alpha\beta} \in \ell^1 S(r^{-\kappa})$ for $\alpha, \beta \in \{0,1,2,3\}$ and
	\[ \mathfrak{h} = \mathfrak{h}_{tt}(r)dt^2 + \mathfrak{h}_{tr}(r)dtdr + \mathfrak{h}_{rr}(r)dr^2 + \mathfrak{h}_{\omega\omega}(r)r^2d\omega^2 \]
	with $\mathfrak{h}_{\gamma\delta} \in S_{rad}(r^{-\kappa})$ for $\gamma, \delta \in \{t,r,\omega \}$. Here $d\omega^2 = d\theta^2 + \sin^2\theta d\phi^2$.
	\end{enumerate}
	We note these assumptions but with $\kappa =1$ match those in \cite{tat2013}, and thus the results apply here. We will appeal to Tataru's results for steps of the proof of the main theorem which are not sensitive to the rate at which the background geometry tends to flat.
	
	\begin{theorem} \label{thetheorem}
		Let $\g$ be a (1+3)-dimensional spacetime satisfying metric assumptions 1-3 above. Let $V$ be a potential of the form
		 \begin{equation} 
			V(x) = V_{\ell}(x) + V_{r}(r), \qquad V_{\ell} \in \ell^1 S(r^{-\kappa-2}), \quad V_{r} \in S_{rad}(r^{-\kappa-2}). 
		\end{equation}
		Assume the homogeneous Cauchy problem
		\begin{equation} \label{homcauchy}
			(\Box_g +V) u(t,x) = 0, \quad u(0,x) = u_0, \quad \partial_t u(0,x) = u_1 
		\end{equation}
		satisfies the uniform energy bound \eqref{unifenbd} and the weak local energy decay assumption \eqref{wled}. If $u$ solves \eqref{homcauchy} with $u_0 \in Z^{\nu+1,\kappa}$ and  $u_1 \in Z^{\nu,\kappa+1}$ for $\nu \ge 31\kappa+168$, then in normalized coordinates (see section 2) $u$ satisfies the bounds
		\begin{align}
			|u(t,x)| &\lesssim \frac{1}{\la t \ra \la t - r \ra^{\kappa+1}} \big( \|u_0\|_{Z^{\nu+1,\kappa}} + \|u_1\|_{Z^{\nu,\kappa+1}} \big)\\
			|\partial_tu(t,x)| &\lesssim \frac{1}{\la t \ra \la t - r \ra^{\kappa+2}} \big( \|u_0\|_{Z^{\nu+1,\kappa}} + \|u_1\|_{Z^{\nu,\kappa+1}} \big).
		\end{align}
	\end{theorem}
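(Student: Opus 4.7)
The plan is to follow the resolvent/time-Fourier-transform approach used in \cite{tat2013} and reduce the pointwise decay to a careful analysis of the low-frequency behavior of the stationary resolvent, tracking how the rate $\kappa$ in the metric expansion enters. After first passing to the normalized coordinates referenced in the statement, in which $\boxg + V$ becomes a symbol-class perturbation of $-\partial_t^2 + \Delta$, I would take the time Fourier transform to obtain
\begin{equation*}
\hat u(\tau,x) = R(\tau)\, G(\tau,x;u_0,u_1),
\end{equation*}
where $R(\tau)$ is the outgoing resolvent of the stationary operator $P(\tau) = e^{-i\tau t}(\boxg+V)e^{i\tau t}$ and $G$ depends linearly on the Cauchy data. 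Pointwise decay of $u(t,x)$ then follows by inverting the Fourier transform after a smooth cutoff $\chi(\tau)$ splits low from high frequencies near $\tau=0$.

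The high-frequency contribution is handled as in \cite{tat2013}: weak local energy decay \eqref{wled} together with the uniform energy bound \eqref{unifenbd} shows that this piece of $u$ decays faster than any polynomial at fixed $x$, with an $O(1)$, $\kappa$-independent loss of vector fields. The new content of the theorem is entirely in the low-frequency piece. There I would construct a low-frequency parametrix for $R(\tau)$ by perturbing off the flat outgoing resolvent $R_0(\tau) = (-\Delta - \tau^2 - i0)^{-1}$ via a Neumann-type expansion, so that each application of $\mathfrak{g}-\mathfrak{m}$ or $V$ gains a factor of $r^{-\kappa}$ (respectively $r^{-\kappa-2}$) in spatial weight. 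Through the mapping properties of $R_0(\tau)$ on weighted $\LEs$-to-$\LE$ spaces, these gains translate into additional smoothness of $R(\tau)$ in $\tau$ at $\tau=0$. I would separate the radial part $\mathfrak{h}$ from the non-radial part $\mathfrak{f}$, treating $\mathfrak{h}$ via $S_{rad}$ bounds together with the scaling field $S_r$, and $\mathfrak{f}$ via $\ell^1S$ bounds together with the rotation and translation fields, since only the radial operators commute cleanly with the spherically symmetric kernel of $R_0(\tau)$ at leading order.

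Once $\chi(\tau)\hat u(\tau,x)$ is shown to have $\kappa+2$ controlled derivatives in $\tau$ at $\tau=0$, plain integration by parts in
\begin{equation*}
u^{\text{low}}(t,x) = \frac{1}{2\pi}\int e^{i\tau t}\chi(\tau)\hat u(\tau,x)\, d\tau
\end{equation*}
gives $\la t\ra^{-\kappa-2}$ decay. To upgrade to the refined bound $\la t\ra^{-1}\la t-r\ra^{-\kappa-1}$, I would extract the outgoing oscillation $e^{i\tau\rr}$ from the kernel of $R(\tau)$ and integrate by parts against the phase $\tau(t-\rr)$; this converts $\kappa+1$ of the available $\tau$-derivatives into factors of $\la t-r\ra^{-1}$, with the remaining derivative yielding the $\la t\ra^{-1}$ factor through a single integration against the non-oscillatory portion. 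The bound on $\partial_t u$ follows by differentiating under the integral, which brings down an extra factor of $\tau$ and hence one more power of $\la t-r\ra^{-1}$ after an additional integration by parts. Each $\tau$-derivative on $\hat u(\tau,x)$ is converted to vector field derivatives on the Cauchy data through the identity $S = S_r - S_\tau$ built into the definition of the $Z^{n,q}$ norms; bookkeeping the derivatives consumed by the weak local energy step, by the low-frequency parametrix, and by the $\kappa+2$ integrations by parts produces the stated threshold $\nu \geq 31\kappa+168$.

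The hard part will be the low-frequency parametrix construction. The flat resolvent $R_0(0) = (-\Delta)^{-1}$ has only weighted-$L^2$ invertibility with a built-in loss of $r$-weights, and each Neumann iteration must absorb exactly the available $r^{-\kappa}$ spatial decay without generating logarithmic obstructions, which is a subtle phenomenon that restricts the argument to integer $\kappa\ge 2$ and forces the separate treatment of radial and angular perturbations. Controlling the remainder of this expansion uniformly in $\tau$ down to $\tau=0$, in norms compatible with the vector field setup so that the subsequent $\tau$-integration by parts actually produces the claimed sharp pointwise bound, is where the bulk of the technical work is expected to sit.
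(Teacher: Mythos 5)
Your overall architecture matches the paper's: normalize to $P = -\partial_t^2 + \Delta + \partial_t P^1 + P^2$, write $\hat u(\tau) = R_\tau(-i\tau u_0 + P^1u_0 - u_1)$, split low and high frequencies, pull out the outgoing oscillation $e^{-i\tau\rr}$, and integrate by parts against $\tau(t-\rr)$. Where you diverge is the low-frequency parametrix. You propose a Neumann series perturbing the \emph{flat} outgoing resolvent $(-\Delta-\tau^2-i0)^{-1}$ by the metric and potential corrections, but that series has no reason to converge: the perturbation is only $O(r^{-\kappa})$, not small in operator norm. The paper instead works with the zero resolvent of the \emph{full} perturbed stationary operator, $R_0 = (\Delta+P^2)^{-1}$, expands $R_0 g$ in powers of $\rr^{-1}$ for $r > R$ (Lemma~\ref{freeres}, Proposition~\ref{propexp}) with the only smallness being in the cutoff scale $R^{-1}$, and then computes the error in $R_\tau g \approx (R_0 g)e^{-i\tau\rr}$ by applying $P_\tau$ to the ansatz and iterating (Proposition~\ref{lowferr}). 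This sidesteps having to prove $\tau$-regularity of $R(\tau)$ as an operator and instead isolates the precise algebraic form of the error.

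There is a concrete gap in the way you read off the decay rate. You claim $\chi(\tau)\hat u(\tau,x)$ has $\kappa+2$ controlled $\tau$-derivatives at $\tau=0$ and that plain integration by parts then gives $\la t\ra^{-\kappa-2}$. In fact the low-frequency expansion necessarily produces the term $\tau^\kappa\epsilon(r,\tau)e^{-i\tau\rr}$ with
\[
\epsilon(r,\tau) = \rr^{-1}\epsilon_1(r\wedge|\tau|^{-1}) + \tau\big(\epsilon_2(r\wedge|\tau|^{-1}) - \epsilon_2(|\tau|^{-1})\big), \qquad \epsilon_1,\epsilon_2 \in S(\log r),
\]
arising from inverting $\Delta$ on $S_{rad}(r^{-2})$ and $S_{rad}(r^{-3})$ sources; this does \emph{not} have $\kappa+2$ bounded $\tau$-derivatives near $\tau=0$, and it is exactly what fixes the sharp rate $\la t\ra^{-1}\la t-r\ra^{-\kappa-1}$ after a dyadic decomposition in $r$ and a case analysis in $t$ versus $r$. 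The logarithmic obstruction you mention as something to avoid is in fact the dominant contribution and must be carried through the argument, not suppressed. Your high-frequency claim is also imprecise: the paper's high-frequency estimate needs roughly $\kappa+2$ integrations by parts and hence costs $O(\kappa)$ vector fields, not a $\kappa$-independent amount.
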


\subsection{Cutoff and Bump Functions}
	The function $\chi_{<1}(r)$ is defined to be a smooth function which is 1 for $r \le 1$ and 0 for $ r \ge 2$. We define $\chi_{>1}(r) := 1 - \chi_{<1}(r)$ so that $\chi_{>1}(r)$ is a smooth function which is 1 for $r \ge 2$ and 0 for $ r \le 1$. We define $\chi_{\approx 1}(r)$ to be a smooth function which is $1$ for $1 \le r \le 2$ and 0 for $r < 2$ and $r > 4$.
	
	We define $\beta_{\approx m}(r)$ for $m \ge 0$ to be a smooth partition of unity which is subordinate to the dyadic intervals $A_m$. 
	
	 When restricting $r$ by dyadic regions, we use $\chi_{\approx m}$ to indicate $\chi_{\approx m}(r) = \chi_\approx (\frac{r}{2^m})$, $\chi_{>m}(r) = \chi_{>1}(\frac{r}{2^m})$, and $\chi_{<m}(r)=\chi_{<1}(\frac{r}{2^m})$. We note $\partial_r \chi_{<m}(r)$ and $\partial_r \chi_{>m}(r)$ are each supported on $\rr \cong 2^m$ so that 
	 \[ \|\partial_r \chi_{<m}(r)\|_{L^2(A_m)} \cong \|\partial_r \chi_{>m}(r)\|_{L^2(A_m)} \cong 2^\frac{3m}{2}. \]	
	 
	 In other contexts where we restrict to $r<R$ or $r>R$ for some constant $R$ we write $\chi_{\approx R} = \chi_\approx (\frac{r}{R})$, etc..

\subsection{Argument Summary}
   	We first fix a coordinate system that allows us to write the operator $(\Box_g + V)$ in the form
	\begin{equation} \label{pform1}
		P = - \partial_t^2 + \Delta + \partial_t P^1 + P^2  
	\end{equation}
    where $P^1$ and $P^2$ are spatial operators of order 1 and 2, respectively. The coefficients of the operators depend on the metric coefficients assumed in the main theorem. We then use the resolvent (denoted $R_\tau$) to connect the time Fourier transform of a solution $u$ to the Cauchy problem \eqref{homcauchy} with the initial data. 

	We define the resolvent to be the inverse of the image of $P$ under the time Fourier transform, when the inverse exists. We will establish that if $u$ solves \eqref{homcauchy} then
	\begin{equation}  
		\hat{u}(\tau) = R_\tau(-i \tau u_0 + P^1 u_0 - u_1). \label{thesetup}
	\end{equation}
    The final pointwise decay rate is then proved by analyzing the resolvent and inverting the Fourier transform. 
    
    Our argument will be different for high frequencies ($|\tau| \gtrsim 1$) and low frequencies ($|\tau| \lesssim 1$).  Roughly speaking, the low frequency behavior is sensitive to the metric behavior at spatial infinity while the high frequency behavior is sensitive to trapping. We assume the weak local energy decay estimate holds so that some trapping may occur, but this estimate provides enough information to obtain decay for the high frequency part of our solution $u$. It is the low frequency behavior that depends on the metric perturbation at spatial infinity and dictates the pointwise decay rate. We obtain an expansion in powers of $r^{-1}$ for the resolvent at zero frequency and use this to calculate the error in the estimate $R_\tau u_0 \approx (R_0 u_0) \enitr$ for the resolvent at low frequencies. We then apply the inverse Fourier transform to the terms arising in this estimate. The behavior of these terms dictates the final pointwise decay rate.

    This approach is due to \cite{tat2013}. A key difference in our analysis is that we need to go further down in the expansion of the zero resolvent in order to obtain improved decay rates. Changing the expansion then affects the error in the estimate for $R_\tau u_0$ when $|\tau|$ is small. The rate at which the background geometry tends toward flat (indicated by the parameter $\kappa$ in the statement of the main theorem) ultimately determines how far down in the expansion of the zero resolvent we are able to go, which determines the error terms in our low frequency resolvent estimate and in turn determines the result of inverting the Fourier transform.

    \underline{Outline of the Paper}
    \begin{itemize}
	\item Section 2: Replace $\Box_\mathfrak{g} +V$ by $P$ as in \eqref{pform1}.
	\item Section 3: Define the resolvent and appeal to \cite{tat2013} to establish the desired mapping properties.
	\item Section 4: Analyze the resolvent at zero frequency. This analysis depends on the rate at which the background geometry tends toward flat.
	\item Section 5: Estimate the low frequency resolvent by $R_\tau u_0 \approx (R_0 u_0)\enitr$ and calculate the error using the results of section 4.
	\item Section 6: Establish pointwise bounds on derivatives of the resolvent that will be used when inverting the Fourier transform. We do not improve upon the bounds established in \cite{tat2013}, but we do track the resulting regularity requirements more precisely and correct one proposition statement.
	\item Section 7: Invert the Fourier transform to prove theorem \eqref{thetheorem}.
    \end{itemize}
	
\end{section}

\begin{section}{Coordinate Change}
In this section we establish a normalized coordinate system in which the operator $\Box_g +V$ in the statement of Theorem \eqref{thetheorem} can be replaced by an operator $P$ of the form
    \begin{equation} \label{pform3}
		    P = -\partial_t^2 + \Delta + \partial_t P^1 + P^2 
	\end{equation}
where
		\begin{equation} \label{p1form}
			P^1 = \partial_i p^i_1 +p_1^i \partial_i, \qquad p^i_1 \in \ell^1S(r^{-\kappa}) 
		\end{equation}
and
        
		\begin{equation} \label{p2form}
		\begin{split}
		    &P^2 =  \partial_i p^{ij}_2 \partial_j + p^\omega_2 \Delta_\omega + V_{\ell} + V_{r},\\
		    &p^{ij}_2 \in \ell^1S(r^{-\kappa}); \quad V_{\ell} \in \ell^1S(r^{-\kappa-2}); \quad p^\omega_2, V_{r} \in S_{rad}(r^{-\kappa-2}).
		\end{split}
		\end{equation}
The calculations in this section encode the geometric assumptions into the differential operator. Throughout the rest of the paper we will work in the normalized coordinates established here. The statement of the main theorem is given in these coordinates.

Metric Assumption 3 (given in the Introduction) can be restated in dual coefficients and using spherical coordinates as
	\begin{equation} \label{dualform} 
	    \g^\gade = \m^\gade + \f^\gade + \h^\gade 
	\end{equation}
where
	\[ 
	\Big[ \m^{\gade} \Big] = 
	\left[ \begin{array} {cccc}
		-1 & 0 & 0 & 0\\
		0 & 1 & 0 & 0\\
		0 & 0 & \frac{1}{r^2} & 0\\
		0 & 0 & 0 & \frac{1}{r^2\sin^2\theta}
	\end{array} \right], \qquad \Big[ \f^{\gade} \Big] = 
	\left[ \begin{array} {cccc}
		\f^{tt} & \f^{tr} & \frac{f^{t\theta}}{r} & \frac{f^{t\phi}}{r\sin\theta} \\
		\f^{rt} & \f^{rr} & \frac{f^{r\theta}}{r} & \frac{f^{r\phi}}{r\sin\theta} \\
		\frac{\f^{\theta t}}{r} & \frac{\f^{\theta r}}{r} & \frac{f^{\theta\theta}}{r^2} & \frac{f^{\theta \phi}}{r^2\sin\theta} \\
		\frac{\f^{\phi t}}{r\sin\theta} & \frac{\f^{\phi r}}{r\sin\theta} & \frac{f^{\phi\theta}}{r^2\sin\theta} & \frac{f^{\phi \phi}}{r^2\sin^2\theta} \\
	\end{array} \right],
	\]
and
	\[ \Big[ \h^{\gade} \Big] =
	\left[ \begin{array} {cccc}
		\h^{tt} & \h^{tr} & 0 & 0\\
		\h^{rt} & \h^{rr} & 0 & 0\\
		0 & 0 & \frac{\h^{\omega\omega}}{r^2} & 0\\
		0 & 0 & 0 & \frac{\h^{\omega\omega}}{r^2\sin^2\theta}
	\end{array} \right]
	\]
with $\f^{\gade} \in \ls{-\kappa}$ and $\h^\gade \in \sr{-\kappa}$. Furthermore $[\f^\gade]$ and $[\h^\gade]$ are symmetric (i.e. $\f^\gade=\f^{\delta\gamma}$ and $\h^\gade=\h^{\delta\gamma}$). 

	\begin{lemma} \label{ncoord}
	There exists a coordinate system so that $\g$ satisfies Metric Assumptions 1-3 as well as the additional condition
		\[ \mathfrak{h}^{rr} = -\mathfrak{h}^{tt}, \quad \mathfrak{h}^{tr} = 0. \]
	\end{lemma}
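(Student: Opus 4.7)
The plan is to perform a stationary, radial change of coordinates of the form $t = \tilde t + T(\tilde r)$, $r = R(\tilde r)$, with the angular variables unchanged. Stationarity (Assumption 1) is automatic since the map is time-independent, and because the map is radial-on-radial, it acts only on the $t$-$r$ block and on the radial part of the metric: $\f$ transforms to a new non-radial $\tilde \f$, while $\h$ transforms to a new radial $\tilde \h$ whose $\tilde t$-$\tilde r$ block can be adjusted by choosing the two free functions $T$ and $R$.

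Computing the contravariant pushforward on the radial block, the condition $\tilde \h^{\tilde t \tilde r}=0$ is linear in $T'/R'$ and is solved by
\[
\frac{T'(\tilde r)}{R'(\tilde r)} \;=\; \frac{\h^{tr}(R(\tilde r))}{1+\h^{rr}(R(\tilde r))}.
\]
The condition $\tilde \h^{\tilde r\tilde r}=-\tilde \h^{\tilde t\tilde t}$, equivalent to $\tilde\g^{\tilde r\tilde r} + \tilde\g^{\tilde t\tilde t}=0$ on the radial block, after substitution then determines
\[
(R'(\tilde r))^2 \;=\; \frac{(1+\h^{rr})^2}{(1-\h^{tt})(1+\h^{rr})+(\h^{tr})^2}.
\]
Because $\h^{\gade}\in \sr{-\kappa}$ is small at infinity, the right side is smooth, strictly positive for large $\tilde r$ (and can be arranged globally using the Lorentzian signature of $\g$), and equals $1+\sr{-\kappa}$. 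Taking the positive square root and integrating, with the constant of integration chosen so that $R(\tilde r)-\tilde r\to 0$ at infinity---possible because $\kappa\ge 2$ makes $R'-1$ integrable at infinity---we obtain $R(\tilde r)=\tilde r + \sr{-\kappa+1}$ and then $T(\tilde r)=\sr{-\kappa+1}$. These define a smooth global diffeomorphism that is the identity up to a small radial correction.

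It remains to verify that the transformed metric still satisfies Assumption 3. By construction $\tilde\h^{\tilde t\tilde r}=0$ and $\tilde\h^{\tilde r\tilde r}=-\tilde\h^{\tilde t\tilde t}$. Each $\tilde\h^{\gade}$ is an algebraic combination of $R'$, $T'/R'$, and $\h$-components evaluated at $R(\tilde r)$, so it lies in $\sr{-\kappa}$. The angular block $R(\tilde r)^2 d\omega^2 = \tilde r^2 d\omega^2 + \sr{-\kappa+2}\,d\omega^2$ contributes an $\sr{-\kappa}$ term to $\tilde\h^{\omega\omega}/\tilde r^2$. For the non-radial part, the nonzero Jacobian entries on the $t,r$ block differ from the identity by $\sr{-\kappa}$ expressions, so each $\tilde\f^{\gade}$ is built from products of such factors with $\f^{\mu\nu}(R(\tilde r),\tilde\omega)$; the expansion $\f(R(\tilde r),\tilde\omega) = \f(\tilde r,\tilde\omega)+(R-\tilde r)\int_0^1 \partial_r\f\,ds$ keeps it in $\ls{-\kappa}$ since $R-\tilde r\in\sr{-\kappa+1}$ and $\partial_r$ costs a factor $\tilde r^{-1}$. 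Spacelike-ness of the new constant-$\tilde t$ slices (Assumption 2) is preserved because the coordinate change is globally smooth and close to the identity at infinity, so the induced Riemannian metric on the slices remains positive definite.

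The main technical obstacle is not the setup itself but the symbol-class bookkeeping in the last step: showing that composition with $R$ and multiplication by Jacobian factors do not degrade the $\ls{-\kappa}$ and $\sr{-\kappa}$ classes. This is where the hypothesis $\kappa\ge 2$ enters crucially, via the integrability of $R'-1$ at infinity, so that $R-\tilde r$ is bounded and the chain-rule remainders land in the required class rather than producing logarithmic or lower-order corrections.
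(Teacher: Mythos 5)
Your proposal follows the same algebraic idea as the paper---reparametrize the radial variable and shift $t$ by a radial function---and your explicit formulas for $T'/R'$ and $(R')^2$ are correct. The one substantive structural difference, and the one that opens a genuine gap, is that you attempt a \emph{global} coordinate change, whereas the paper inserts a cutoff $\chi_{>R}$ into each coordinate change so that the change is the identity on $\{r \lesssim R\}$. This is not cosmetic. Your diffeomorphism requires the denominator $(1-\h^{tt})(1+\h^{rr})+(\h^{tr})^2$, and hence $R'$, to be positive and bounded away from zero for all $\tilde r$, and you justify this by appealing to the Lorentzian signature of $\g$. That appeal does not close the gap: the signature of $\g$ constrains the full dual block $\m+\f+\h$, not the $\m+\h$ block alone. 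Nothing in Assumptions 1--3 prevents $1+\h^{rr}$, or your denominator, from vanishing at some moderate radius once $\f$ has been stripped off, so the formula for $R'$ may simply not define a diffeomorphism. The paper sidesteps this by choosing $R$ large enough that $\h$ is small on $\{r>R\}$ (where positivity is automatic), leaving the coordinates untouched on $\{r<R\}$, and then reclassifying the residual compactly supported radial pieces (e.g.\ $\chi_{<R}\h^{tr}$) into the $\f$ component, where smooth compactly supported functions belong to $\ls{-\kappa}$ for free. You need this cutoff-and-absorb device; without it the ``globally'' in your second paragraph is unsubstantiated.

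A secondary point: you declare $\kappa\ge 2$ to be ``crucial'' via integrability of $R'-1$. The paper asserts that the lemma with $\kappa=1$ is exactly the setting of \cite{tat2013}, so whatever role $\kappa\ge 2$ plays cannot be truly essential to the coordinate normalization. In the cutoff version, the normalization $R(\tilde r)-\tilde r\to 0$ becomes a free choice rather than a requirement. That said, your bookkeeping of the symbol-class stability under the radial reparametrization---the Taylor expansion of $\f(R(\tilde r))$ and the treatment of the angular block---is more explicit than what the paper writes out, and is a useful supplement. Replacing your global-positivity claim with the paper's cutoff construction, while keeping that careful chain-rule discussion, would yield a complete argument.
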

	\begin{proof}
	In order to achieve $\h^{tr} =0$, we reset $t$ via the coordinate change
			\[ d T = dt - \chi_{>R}\frac{\mathfrak{h}^{rt}}{1+\mathfrak{h}^{rr}} dr \]
    where $R$ is a constant chosen to be sufficiently large so that $1+\mathfrak{h}^{rr} \gtrsim 1 $ for $r>R$. Note we have $\chi_{>R}\frac{\mathfrak{h}^{rt}}{1+\mathfrak{h}^{rr}} \in S_{rad}(r^{-\kappa})$.
    
    To see Assumption 1 still holds, we write the coordinate change as $T = t + Q(r)$ where $Q'(r) = -\chi_{>R}\frac{\mathfrak{h}^{rt}}{1+\mathfrak{h}^{rr}}$. Thus $\parder{}{T} = \parder{}{t}$ so the metric coefficients remain independent of the time variable $T$.
    
    To see Assumption 2 still holds, we calculate
        \[ \la dT, dT \ra = \la dt, dt \ra - 2 \chi_{>R}\frac{\mathfrak{h}^{rt}}{1+\mathfrak{h}^{rr}} \la dt, dr \ra + \left(\chi_{>R}\frac{\mathfrak{h}^{rt}}{1+\mathfrak{h}^{rr}}\right)^2 \la dr, dr \ra. \]
    Choosing $R$ sufficiently large so that $\chi_{>R}\frac{\mathfrak{h}^{rt}}{1+\mathfrak{h}^{rr}}$ is sufficiently small, the sign of $\g^{TT}$ is the same as the sign of $\g^{tt}$. The signature of the metric does not change under the change of coordinates, so the $t=constant$ submanifolds remain positive definite.
    
    To establish Assumption 3 we need only calculate $\g^{T\gamma}$ for $\gamma \in \{ T, r, \theta, \phi \}$ since $r,\theta$, and $\phi$ are unchanged. Direct calculation yields
        \begin{align*}
            &\g^{TT} = -1 + 
                \underbrace{\f^{tt}-2\chi_{>R}\frac{\mathfrak{h}^{rt}}{1+\mathfrak{h}^{rr}} (\f^{tr} + \h^{tr}) + \left(\chi_{>R}\frac{\mathfrak{h}^{rt}}{1+\mathfrak{h}^{rr}}\right)^2 (1 + \f^{rr} + \h^{rr})}_{\in \ls{-\kappa}} + \h^{tt}\\
            &\g^{Tr} = 
                \f^{tr}-\chi_{>R}\frac{\mathfrak{h}^{rt}}{1+\mathfrak{h}^{rr}} \f^{rr} + \chi_{<R} \h^{tr} \in \ls{-\kappa} \\
            &r \g^{T\theta} = 
                \f^{t\theta}- \chi_{>R}\frac{\mathfrak{h}^{rt}}{1+\mathfrak{h}^{rr}}\f^{r\theta} \in \ls{-\kappa}\\
            &r\sin\theta \g^{T\phi} = 
                \f^{t\phi}- \chi_{>R}\frac{\mathfrak{h}^{rt}}{1+\mathfrak{h}^{rr}}\f^{r\phi} \in \ls{-\kappa}.
        \end{align*}
    Thus after relabeling, $\g$ can be written as in $\eqref{dualform}$ with $\h^{tr}=0$, as desired.
    
    Next we achieve $\h^{rr}=-\h^{tt}$ via the coordinate change 
        \[ d\rho = \Big( 1 + \chi_{>R} \frac{-\h^{tt}-\h^{rr}}{1+\h^{rr}} \Big)^{\frac{1}{2}}dr  \]
    where $R$ is a constant chosen so  $1+\h^{rr} \gtrsim 1$ for $ r>R$. Note we have $\chi_{>R} \frac{-\h^{tt}-\h^{rr}}{1+\h^{rr}} \in \sr{-\kappa}$ and $\Big( 1 + \chi_{>R} \frac{-\h^{tt}-\h^{rr}}{1+\h^{rr}} \Big)^{\frac{1}{2}} \in S_{rad}(1)$. The $t=constant$ subspaces are invariant under the change of coordinates and thus remain positive definite and the metric coefficients remain independent of $t$. It follows that Assumptions 1 and 2 still hold.
	
	We now calculate $\g^\gade$ in the new coordinate system. Since $t,\theta$, and $\phi$ are unchanged, we need only calculate $\g^{\rho\gamma}$ for $\gamma \in \{ t, \rho, \theta, \phi \}$. Direct calculation yields
	    \begin{align*}
	        &\g^{\rho \rho} = 1 + 
	            \underbrace{\f^{rr} + \chi_{<R} (\h^{rr} + \h^{tt}) + \chi_{>R} \frac{-\h^{tt}-\h^{rr}}{1+\h^{rr}}\f^{rr}}_{\in \ls{-\kappa}} -\h^{tt}\\
	        &\g^{\rho t} = \Big( 1 + \chi_{>R} \frac{-\h^{tt}-\h^{rr}}{1+\h^{rr}} \Big)^{\frac{1}{2}} \f^{rt} \in \ls{-\kappa} \\
	        &r\g^{\rho\theta} = \Big( 1 + \chi_{>R} \frac{-\h^{tt}-\h^{rr}}{1+\h^{rr}} \Big)^{\frac{1}{2}}\f^{r\theta} \in \ls{-\kappa} \\
	        &r\sin\theta\g^{\rho\phi} = \Big( 1 + \chi_{>R} \frac{-\h^{tt}-\h^{rr}}{1+\h^{rr}} \Big)^{\frac{1}{2}}\f^{r\phi} \in \ls{-\kappa}.
	    \end{align*}
	 After relabeling, $\g$ can now be written as in $\eqref{dualform}$ with the additional assumption $\h^{rr}=-\h^{tt}$, as desired.
	\end{proof}

	\begin{proposition} \label{Pop}	
		In normalized coordinates (established in Lemma~\ref{ncoord}), the operator $\Box_\g +V$ can be replaced by an $L^2$ self-adjoint operator, which can be written as in \eqref{pform3} where \eqref{p1form} and \eqref{p2form} hold.
	\end{proposition}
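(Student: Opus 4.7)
The plan is to replace $\Box_\g + V$ by a scalar modification that is (i) $L^2(dt\,dx)$-self-adjoint and (ii) has $\partial_t^2$-coefficient exactly $-1$ and $\Delta$-coefficient exactly $1$. Because $\g$ is stationary, multiplication and conjugation by positive functions of $x$ alone commute with $\partial_t$, so these operations preserve both the Cauchy problem and the general shape of \eqref{pform3}. The starting observation is that $\sqrt{|\g|}\,\Box_\g\,u = \partial_\alpha(\sqrt{|\g|}\g^{\albe}\partial_\beta u)$ is already in divergence form and hence $L^2(dt\,dx)$-self-adjoint. Setting $\mu(x) := -1/(\sqrt{|\g|}\,\g^{tt})$, which is positive by Metric Assumption~2, I define
\[
P := \mu^{1/2}\,\sqrt{|\g|}\,(\Box_\g + V)\,\mu^{1/2}.
\]
Unitary conjugation by $\mu^{1/2}$ preserves self-adjointness, while multiplication by $\mu$ in the principal symbol forces the $\partial_t^2$-coefficient to be exactly $-1$.

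Next I would expand $\sqrt{|\g|}\Box_\g u = \partial_\alpha(\sqrt{|\g|}\g^{\albe}\partial_\beta u)$, using stationarity to drop any $\partial_t$ landing on coefficients. The cross-derivative contribution $2\sqrt{|\g|}\g^{ti}\partial_t\partial_i$ together with the first-order piece $\partial_i(\sqrt{|\g|}\g^{ti})\partial_t$ package exactly into $\partial_t\tilde P^1$ with $\tilde P^1 = \partial_i q^i + q^i\partial_i$ and $q^i = \sqrt{|\g|}\g^{ti}$. Since Lemma~\ref{ncoord} gives $\h^{tr}=0$, the coefficient $\g^{ti}$ has only $\f$-contributions, so (after accounting for the $\mu^{1/2}$-conjugation, whose corrections involve derivatives of $\mu^{1/2}\in 1+\sr{-\kappa}$) one obtains $p_1^i \in \ls{-\kappa}$ in the required form \eqref{p1form}. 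The purely spatial second-order part becomes $\partial_i(\mu\sqrt{|\g|}\g^{ij})\partial_j = \partial_i(\g^{ij}/(-\g^{tt}))\partial_j$ modulo commutator remainders from $\mu^{1/2}$, and expanding the reciprocal gives
\[
\frac{\g^{ij}}{-\g^{tt}} - \delta^{ij} = (\f^{ij}+\delta^{ij}\f^{tt}) + (\h^{ij}_{\text{Cart}}+\delta^{ij}\h^{tt}) + \sr{-2\kappa},
\]
where the $\f$-perturbation is $\ls{-\kappa}$ directly and the $\sr{-2\kappa}$ remainder embeds into $\ls{-\kappa}$ since $\kappa\ge 2$.

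The critical step is the $\h$-contribution. Converting $\h$ from spherical to Cartesian gives $\h^{ij}_{\text{Cart}} = \h^{rr}(x^ix^j/r^2) + (\h^{\omega\omega}/r^2)(\delta^{ij}-x^ix^j/r^2)$, and applying Lemma~\ref{ncoord}'s identity $\h^{rr}=-\h^{tt}$ collapses the $\h$-piece to
\[
\h^{ij}_{\text{Cart}} + \delta^{ij}\h^{tt} = \big(\h^{tt}+\h^{\omega\omega}/r^2\big)\big(\delta^{ij}-x^ix^j/r^2\big),
\]
which is purely tangential to the sphere. A short direct computation (using $\partial_i A \propto x^i/r$ is annihilated by the tangential projector $M^{ij}$, while $\partial_i M^{ij} = -2x^j/r^2$) then shows that the resulting divergence-form operator collapses exactly to $(A/r^2)\Delta_\omega$ where $A = \h^{tt}+\h^{\omega\omega}/r^2$. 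Thus the leading $\h$-contribution absorbs entirely into $p_2^\omega\Delta_\omega$ with $p_2^\omega = A/r^2 \in \sr{-\kappa-2}$, while $p_2^{ij}$ inherits only $\f$-type and $\sr{-2\kappa}$ remainders and therefore lies in $\ls{-\kappa}$ as required by \eqref{p2form}. The remaining first-order spatial terms (from the divergences $\partial_j(\sqrt{|\g|}\g^{ji})$ and from the $\mu^{1/2}$-commutators) are absorbed into the divergence form $\partial_i p_2^{ij}\partial_j$; the residual zeroth-order contributions, together with $V$ itself, split by radial vs.\ non-radial parts into $V_r\in\sr{-\kappa-2}$ and $V_\ell\in\ls{-\kappa-2}$.

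The main obstacle is the bookkeeping through the three simultaneous transformations (divergence-form rewrite, $\mu^{1/2}$-conjugation, spherical-to-Cartesian conversion of $\h$): each step introduces remainders that must be verified to land in the claimed symbol classes. The essential structural input is $\h^{rr}+\h^{tt}=0$ from Lemma~\ref{ncoord}, which forces the $\h$-contribution to the Cartesian perturbation to be purely tangential and thereby to collapse into $p_2^\omega\Delta_\omega$ under the divergence; without this cancellation, one would be left with an uncontrolled $\sr{-\kappa}$ piece in $p_2^{ij}$ that violates the $\ls{-\kappa}$ requirement. Self-adjointness of $P$ is automatic from the construction, but one sanity-checks that $P^1$ is anti-self-adjoint (from its $\partial_i p_1^i + p_1^i\partial_i$ structure with real $p_1^i$) and that $p_2^{ij}$ is symmetric (from the symmetry of $\g^{ij}$), so that $P$ indeed has the form \eqref{pform3} as a self-adjoint operator.
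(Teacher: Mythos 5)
Your proposal is essentially the paper's proof: after simplifying, your $P = \mu^{1/2}\sqrt{|\g|}(\Box_\g+V)\mu^{1/2}$ with $\mu^{1/2}=(-\g^{tt})^{-1/2}|\g|^{-1/4}$ is literally the paper's operator $|\g|^{1/4}(-\g^{tt})^{-1/2}(\Box_\g+V)(-\g^{tt})^{-1/2}|\g|^{-1/4}$, and you correctly identify that $\h^{rr}=-\h^{tt}$ is what makes the $\h$-perturbation to $\g^{ij}/(-\g^{tt})$ purely tangential, collapsing into $p_2^\omega\Delta_\omega$ via the identity $\partial_i(A(r)(\delta^{ij}-x^ix^j/r^2)\partial_j)=(A/r^2)\Delta_\omega$.

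One small arithmetic slip: the spherical-to-Cartesian conversion should read $\h^{ij}_{\text{Cart}} = \h^{rr}\,x^ix^j/r^2 + \h^{\omega\omega}\,(\delta^{ij}-x^ix^j/r^2)$, not $\h^{\omega\omega}/r^2$ in the tangential piece — since $\h^{\theta\theta}=\h^{\omega\omega}/r^2$ and the Jacobian contributes a compensating factor of $r^2$ from $|\partial_\theta|^2$. Consequently the tangential coefficient is $A=\h^{tt}+\h^{\omega\omega}$, not $\h^{tt}+\h^{\omega\omega}/r^2$; your final claim $p_2^\omega=A/r^2\in\sr{-\kappa-2}$ still holds with the corrected $A$, matching the paper's $p_2^\omega=(\h^{tt}+\h^{\omega\omega})r^{-2}$. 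Similarly, $\mu^{1/2}\in 1+\ls{-\kappa}+\sr{-\kappa}$ rather than $1+\sr{-\kappa}$, since $|\g|$ and $\g^{tt}$ carry both $\f$- and $\h$-perturbations; this does not alter the symbol-class conclusions.
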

    \begin{proof}
    The result of the proposition is obtained via direct calculation. We outline the key steps of the process. Converting from spherical to rectangular coordinates, the normalized metric $\g$ can be written as
        \[ \Big[ \g^\albe \Big] = \Big[ \m^\albe \Big] + \Big[ \f^\albe \Big] + \Big[ \h^\albe \Big]  \]
	where
	    \[ \Big[ \m^\albe \Big] = 
	        \left[ \begin{array}{cccc}
		    -1 & 0 & 0 & 0\\
		    0 & 1 & 0 & 0\\
		    0 & 0 & 1 & 0\\
		    0 & 0 & 0 & 1 \end{array}
	           \right], \qquad 
	       \Big[ \f^\albe \Big] = 
	       \left[ \begin{array}{cccc}
	            \f^{tt} & \f^{t1} & \f^{t2} & \f^{t3} \\
	        	\f^{t1} & \f^{11} & \f^{12} & \f^{13} \\
	        	\f^{t2} & \f^{12} & \f^{22} & \f^{23} \\
	        	\f^{t3} & \f^{13} & \f^{23} & \f^{33} \end{array}
            	\right],
	    \]
    and
	    \[ \Big[ \h^\albe \Big] = \left[ \begin{array}{cccc}
		    \h^{tt} & 0 & 0 & 0\\
		0 & \h^{\omega\omega} & 0 & 0\\
		0 & 0 & \h^{\omega\omega} & 0\\
		0 & 0 & 0 & h^{\omega\omega} \end{array}
	\right] - \frac{\h^{tt} + \h^{\omega\omega}}{r^2} \left[ \begin{array}{cccc}
		0 & 0 & 0 & 0 \\
		0 & x_1^2 & x_1x_2 & x_1x_3 \\
		0 & x_1x_2 & x_2^2 & x_2x_3 \\
		0 & x_1x_3 & x_2x_3 & x_3^2
	\end{array}
	\right]
	\]
    with $\f^\albe \in \ls{-\kappa}$ and $\h^\albe \in \sr{-\kappa}$.
    
    To make the operator self-adjoint, we conjugate by $|\g|^{\frac{1}{4}}$ where $|\g| = |\det(\g)|$:
    	\[ \Box_\g + V \to |\g|^\frac{1}{4} (\Box_\g + V) |\g|^{-\frac{1}{4}} \]
    The -1 coefficient of $\partial_t^2$ in \eqref{pform3} is achieved through multiplication by $(-\g^{tt})^{-1}$, but we split this factor into multiplication by $(-\g^{tt})^{-1/2}$ on the left and right so the operator remains self-adjoint. Thus we will replace $\Box_\g + V$ by
	    \[ P = |\g|^{1/4}(-\g^{tt})^{-1/2}(\Box_g + V)(-\g^{tt})^{-1/2}|\g|^{-1/4}. \]
    
    Commuting and writing the operator in divergence form we find
        \begin{equation} \label{pdiv}
            P = \partial_\alpha A^2B^\albe \partial_\beta + A(\partial_\alpha B^\albe)(\partial_\beta A) + AB^\albe (\partial_\albe A) + (\g^{tt})^{-1}V  
        \end{equation}
    where
        \[ A:= (-\g^{tt})^{-1/2}|\g|^{-1/4} \quad \hbox{ and } \quad B^\albe:= |\g|^{\frac{1}{2}}\g^\albe. \]
    Since $|\g| - 1, \g^{tt}+1 \in \ls{-\kappa} + \sr{-\kappa}$, the scalar terms in \eqref{pdiv} are of the form $V_\ell + V_r$ as in \eqref{p2form}. 
    
    Consider the term $\partial_\alpha A^2B^\albe \partial_\beta = \partial_\alpha (-\g^{tt})^{-1}\g^{\alpha\beta}\partial_\beta$ in \eqref{pdiv}. When $\alpha=\beta=t$ we find
        \begin{equation} \label{ptt}
		    \partial_t (-\g^{tt})^{-1} \g^{tt} \partial_t = - \partial_t^2.
	    \end{equation} 
	When either $\alpha = t$ or $\beta = t$ (but not both), the desired form, $\partial_t(\partial_i p_1^i + p_1^i \partial_i)$ with $p_1^i \in \ls{-\kappa}$, follows from the observation
        \[ (-\g^{tt})^{-1} \g^{ti} = \frac{\f^{ti}}{1-\f^{tt}-\h^{tt}} \in \ls{-\kappa}, \qquad i \in \{ 1, 2, 3\}. \]
	   
    When $\alpha, \beta \in \{ 1,2,3 \}$ we use $i, j$ instead of $\alpha, \beta$ since we are only considering spatial terms. If $i = j$ then
        \begin{equation} \label{gttgii}
            (-\g^{tt})^{-1}\g^{ii} = 1 + p_2^{ii} + (\h^{tt}+\h^{\omega \omega})(1-x_i^2r^{-2})
        \end{equation}
    where
        \[ p_2^{ii} := \frac{\f^{ii}+\f^{tt} + (\h^{tt}+\h^{\omega\omega})(1-x_i^2r^{-2})(\f^{tt}+\h^{tt})}{1-\f^{tt}-\h^{tt}} \in \ls{-\kappa}. \]
    If $i \ne j$ then we find
        \begin{equation} \label{gttgij}
		    (-\g^{tt})^{-1}\g^{ij} =  p_2^{ij} - (\h^{tt}+\h^{\omega \omega})x_ix_jr^{-2}, \qquad i \ne j 
	    \end{equation}
	where
	    \[ p_2^{ij} := \frac{\f^{ij} - (\f^{tt} +\h^{tt})(\h^{tt}+\h^{\omega\omega})x_ix_jr^{-2}}{1-\f^{tt}-\h^{tt}} \in \ls{-\kappa}. \]
	
	Combining \eqref{gttgii} and \eqref{gttgij} yields
	    \[ (-\g^{tt})^{-1}\g^{ij} = \delta_{ij} + p_2^{ij} + (\h^{tt} + \h^{\omega\omega})(\delta_{ij}-x_ix_jr^{-2}) \]
	and we find
	    \[ \partial_i (-\g^{tt})^{-1}\g^{ij} \partial_j = \Delta + \partial_i p_2^{ij} \partial_j + p_2^\omega\Delta_\omega \]
    where $p_2^\omega = (\h^{tt} +\h^{\omega\omega})r^{-2} \in \sr{-\kappa-2}$ and $p_2^{ij}$ are as above. This concludes the proof of the proposition.
    \end{proof}
    
    Throughout the rest of the paper we take $P$ to be the operator established in Proposition \ref{Pop}.
\end{section}

\begin{section}{Preliminary Results on the Resolvent}
 The results in this section are not sensitive to the rate at which the background geometry tends toward flat. Instead, they rely on the energy assumptions in the statement of the main theorem. The results of section 2 mean we are interested in the Cauchy problem
	 \begin{equation} \label{pcauchy}
	 	Pu = f, \qquad u(0,\cdot) = u_0, \qquad \partial_t u(0,\cdot) = u_1. 
	 \end{equation}
The evolution satisfies the uniform energy bounds \eqref{unifenbd} and the weak local energy estimate \eqref{wled} since these estimates are coordinate independent. We define the resolvent as follows:
	\begin{definition}
		The operator $P_{\tau}$ associated to $P$ is given by $\partial_t \mapsto i\tau$ so that
		\[ P_{\tau} := \tau^2 + \Delta + i\tau P^1 + P^2. \]
	\end{definition}

	\begin{definition}
		The resolvent associated to $P$, denoted $R_{\tau}$, is defined by
		\[ R_{\tau} := P_{\tau}^{-1}\] when it exists.
	\end{definition}

 The following proposition shows that the uniform energy assumption guarantees that $R_\tau$ exists when $\tau <0$ and links the time Fourier transform of a solution $u$ to the initial data. It is this link that we will exploit in order to establish the final pointwise decay. The results of Proposition \ref{resex} are established in \cite{tat2013} section 3, but we include a proof since \eqref{hatures} is a fundamental piece of our argument.
    
    \begin{proposition} \label{resex}
		Assume \eqref{pcauchy} satisfies the uniform energy bounds \eqref{unifenbd}. If $\Im \tau < 0$, the operator $P_{\tau}: H^2 \to L^2$ is one-to-one and the range of $P_\tau$ is dense in $L^2$. Furthermore, if $u$ satisfies \eqref{pcauchy} then for $\Im \tau <0$ we have
		\begin{equation} \label{hatures}
			\hat{u}(\tau,x) = R_{\tau} (\hat{f}(\tau) -i\tau u_0 + P^1 u_0 - u_1).
		\end{equation}
	\end{proposition}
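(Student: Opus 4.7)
The plan is to handle the three assertions in order: the identity \eqref{hatures}, injectivity of $P_\tau$, and density of its range.

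For \eqref{hatures}, I would take the one-sided time Fourier transform $\hat u(\tau) := \int_0^\infty e^{-it\tau} u(t)\, dt$, which is well defined for $\Im \tau < 0$ since $|e^{-it\tau}| = e^{t\,\Im \tau}$ decays exponentially in $t$ and the uniform energy bound controls the growth of $u$. Integrating by parts in $t$ produces $\widehat{\partial_t u}(\tau) = -u_0 + i\tau \hat u(\tau)$ and $\widehat{\partial_t^2 u}(\tau) = -u_1 - i\tau u_0 - \tau^2 \hat u(\tau)$, the boundary contributions at $t = +\infty$ being killed by the exponential decay. Applying these to $-\partial_t^2 u + \Delta u + \partial_t P^1 u + P^2 u = f$ and collecting like terms yields
\[
  P_\tau \hat u \;=\; \hat f - i\tau u_0 + P^1 u_0 - u_1,
\]
which once $R_\tau$ is available on the right-hand side is exactly \eqref{hatures}.

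For injectivity, let $v \in H^2$ satisfy $P_\tau v = 0$ and set $u(t,x) := e^{it\tau} v(x)$. Then $Pu = e^{it\tau} P_\tau v = 0$ with Cauchy data $(u_0, u_1) = (v, i\tau v)$. The $k=0$ case of \eqref{unifenbd} forces $\|\nabla u(t,\cdot)\|_{L^2} + \|\partial_t u(t,\cdot)\|_{L^2}$ to remain bounded for $t \geq 0$, but this quantity equals $e^{-t\,\Im \tau}\bigl(\|\nabla v\|_{L^2} + |\tau|\,\|v\|_{L^2}\bigr)$, which grows exponentially since $\Im \tau < 0$. The only way to reconcile these is $v \equiv 0$.

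For density of the range I would argue by duality. The construction of Section~2 makes $P^1$ anti-self-adjoint and $P^2$ self-adjoint on $L^2(\R^3)$, so the formal adjoint of $P_\tau:H^2\to L^2$ is $P_{\bar\tau}$. If the range of $P_\tau$ were not dense, there would exist $0 \neq g \in L^2$ satisfying $P_{\bar\tau} g = 0$ distributionally; since $P_{\bar\tau}$ has principal part $\Delta$ with bounded lower-order coefficients, elliptic regularity bootstraps $g$ to $H^2$. It then suffices to rerun the injectivity argument for $P_{\bar\tau}$ with $\Im \bar\tau > 0$: I would use the ansatz $u(t,x) := e^{it\bar\tau} g(x)$ considered on $t \leq 0$ rather than $t \geq 0$, together with the backward-in-time version of \eqref{unifenbd}. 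The latter is available because the substitution $t \mapsto -t$ sends $P$ to an operator of the same structural class (only the sign of the $\partial_t P^1$ term flips, which is still of the form \eqref{p1form} with $-p_1^i \in \ell^1 S(r^{-\kappa})$), so all hypotheses transfer. The exponential growth of $\|u[t]\|$ as $t \to -\infty$ again forces $g \equiv 0$.

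The main obstacle I anticipate is this final step: setting up the backward-in-time energy estimate cleanly (not literally stated in \eqref{unifenbd}) and justifying the elliptic upgrade of $g$ from $L^2$ to $H^2$ in the presence of the first-order term $P^1$. Once density is secured, injectivity provides $R_\tau$ on the range, and since $\hat f - i\tau u_0 + P^1 u_0 - u_1 = P_\tau \hat u$ lies in this range by the first paragraph, formula \eqref{hatures} follows.
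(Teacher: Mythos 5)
Your argument for injectivity of $P_\tau$ is a genuinely different and nicer route than the paper's. You take $v \in \ker P_\tau$, observe that $u(t,x) = e^{it\tau} v(x)$ solves $Pu = 0$ with data $(v, i\tau v)$, and show that the exponential growth of $\|u[t]\|_{\dot H^{0,1}\times L^2}$ as $t \to \infty$ (since $\Im\tau < 0$) is incompatible with the $k=0$ case of \eqref{unifenbd} unless $v=0$. This is correct, provided you first note that \eqref{unifenbd} implies uniqueness of solutions (so that $e^{it\tau}v$ really is the solution with that data). Your derivation of $P_\tau\hat u = \hat f - i\tau u_0 + P^1 u_0 - u_1$ by integration by parts matches \eqref{ptaugen} in the paper, and the logic of applying $R_\tau$ once injectivity is in hand is sound. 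The paper instead builds an explicit candidate inverse $Q_\tau$ from the forward Cauchy evolution with data $(0,-g)$, derives the $H^{N+1}$ mapping bounds from \eqref{unifenbd}, and verifies $P_\tau Q_\tau = I$ on $H^1$ directly from the same integration-by-parts identity; $Q_\tau P_\tau = I$ (hence injectivity) then follows via a Duhamel computation together with an indicator-function trick. What your approach buys is a shorter injectivity proof; what the paper's buys is that density comes for free once $P_\tau Q_\tau = I$ is known on $H^1$, with no appeal to a backward evolution.

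The gap, which you correctly anticipate, is in the density step. To conclude $\ker P_{\bar\tau} = 0$ by duality you would need a uniform energy bound for the backward-in-time evolution, i.e.\ for the operator obtained from $P$ by $t \mapsto -t$. It is true that this operator lies in the same structural class (the sign of $\partial_t P^1$ flips, but $-p_1^i \in \ell^1 S(r^{-\kappa})$), but Theorem \ref{thetheorem} only \emph{assumes} \eqref{unifenbd} holds for the given $\Box_\mathfrak{g}+V$; it does not assert, and it does not follow formally, that the bound holds for every operator of that class, nor for the time-reversed one in particular. (For Kerr-type backgrounds these forward bounds are themselves nontrivial theorems, and their proofs are usually stated only for $t\geq 0$.) There is also a secondary loose end in the elliptic upgrade of a distributional null vector $g\in L^2$ of $P_{\bar\tau}$ to $H^2$: the coefficients of $P^1, P^2$ are merely in symbol classes, so you would want to write out why standard elliptic regularity applies; this is routine but not free. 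The paper's $Q_\tau$-construction avoids both of these issues entirely, which is why it proves density rather than injectivity directly: it shows $H^1 \subset \operatorname{ran} P_\tau$ by producing an $H^2$ preimage $Q_\tau g$ for each $g \in H^1$, using only the forward estimate. If you want to keep your cleaner injectivity argument, the simplest repair is to graft in the paper's density argument (construct $Q_\tau$, show $Q_\tau: H^1\to H^2$, verify $P_\tau Q_\tau g = g$ via \eqref{ptaugen}) rather than appealing to a backward bound that is not among the hypotheses.
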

	\begin{proof}
		Let $Q_{\tau}$ be a family of $\tau$ dependent operators which are defined by $Q_\tau g = \hat{u}(\tau)$ where $u(t,x)$ solves the homogeneous Cauchy problem
		\[ Pu = 0, \quad u(0) = 0, \quad \partial_t u(0) = -g \in L^2. \]
We will show for $\Im \tau <0$ that $Q_\tau P_\tau g = P_\tau Q_\tau g = g$ (i.e. $P_\tau$ is invertible and $R_\tau=Q_\tau$). 
		
		First we establish $L^2$ based bounds on $Q_\tau g$. By assumption, the evolution \eqref{pcauchy} satisfies the uniform energy bounds \eqref{unifenbd}, which translate into $L^2$ based bounds for $Q_\tau g$. Using the notation $\| \phi \|_{\dot{H}^{N,1}} = \| \nabla \phi \|_{H^N}$ and setting $u(t,x)\equiv 0$ for $t<0$ we use the Minkowski Integral Inequality and \eqref{unifenbd} to find for any $N \ge 0$
		\begin{align*}
			\| Q_{\tau} g \|_{\dot{H}^{N,1}} &\le \sum_{j \le N} \int_0^{\infty} \left( \int_{\R^3} \left|e^{-it\tau}\nabla^{j+1} u(t,x) \right|^2 \hbox{ } dx \right) ^{1/2} dt 
			    \lesssim \frac{1}{|\Im \tau|} \| g \|_{H^N}.\\
		\end{align*}
	Similarly we calculate	
		\begin{align*}
			|\tau| \| Q_\tau g \|_{H^N}
			    \le \int_0^\infty e^{t\Im \tau} \|\partial_t u(t,\cdot)\|_{H^N(\R^3)} \dd t  
			    \lesssim \frac{1}{|\Im \tau|} \| g\|_{H^N}. 
		\end{align*}
		Therefore if $g \in H^N$ then $Q_\tau g \in H^{N+1}$ for $\Im \tau < 0$. 
		
		In general taking the time Fourier transform of  $Pu$ (again setting $u(t,x) \equiv 0$ for $t <0$) and integrating by parts yields
				\begin{align*} 
			0 = \int e^{-it\tau} Pu  \dd t = P_\tau \hat{u}(\tau) + \partial_tu(0) + i\tau u(0) - P^1u(0)
		\end{align*}
		so that 
		\begin{equation} \label{ptaugen}
			P_\tau  \hat{u}(\tau) = (Pu)^{\hat{}} - i\tau u(0) + P^1u(0) - \partial_t u(0).
		\end{equation}
		
		Given $g \in H^1$, we have $Q_\tau g \in H^2$, so $Q_\tau g$ is in the domain of $P_\tau$. Applying \eqref{ptaugen} to our definition of $Q_\tau g$, we find $ P_\tau Q_\tau g = g$. Thus $H^1$ is contained in the range of $P_\tau:H^2 \to L^2$, so the range is dense in $L^2$.
		
		Next we aim to show $Q_\tau P_\tau g = g$. To this end, we claim that if $u(t,x)$ solves the nonhomogeneous Cauchy problem 
		\[ Pu = f, \quad u(0,\cdot) = u_0, \quad \partial_t u(0,\cdot) = u_1 \]
		then 
		\begin{equation} \label{hatu}
			\hat{u}(\tau) = Q_\tau (\hat{f}(\tau) - i\tau u_0 + P^1u_0 - u_1).
		\end{equation} 
		
	Once \eqref{hatu} is established, we can show $Q_\tau P_\tau g = g$. Indeed, assume \eqref{hatu} holds and let $g(x) \in H^2$ be given. We define $u(t,x) := g(x)\mathbf{1}_{t\ge 0}$, where $\mathbf{1}_{t \ge 0}$ is an indicator function that is $1$ for $t \ge 0$ and $0$ otherwise. Taking the time Fourier transform of $u(t,x)$ yields $\hat{u}(\tau) = \frac{1}{|\Im \tau|}g$. By \eqref{ptaugen} we have
		\begin{equation} \label{nextref}
		    \frac{1}{|\Im \tau|} P_\tau g = (Pu)^{\hat{}} - i\tau u(0) + P^1u(0) - \partial_t u(0). 
		\end{equation}
		Then applying $Q_\tau$ to \eqref{nextref} and using \eqref{hatu} gives 
		\begin{align*}
			Q_\tau P_\tau g &= |\Im \tau| Q_\tau \Big((Pu)^{\hat{}} - i\tau u(0) + P^1u(0) - \partial_t u(0) \Big) = g,
		\end{align*}
		as desired.
		
		It is left to show \eqref{hatu}. To do this we use Duhamel's formula and find
		\begin{equation} \label{unohat}
			u(t,x) = \int_0^t u_a(t-s,x;s) \hbox{ } ds + \partial_tu_b + u_c + u_d, 
		\end{equation}
	where $u_1 (t,x;s), u_2(t,x), u_3(t,x),$ and $u_4(t,x)$ solve the following:
	\begin{align*}
		Pu_a &= 0 \qquad u_a(0, x; s) = 0 \qquad \partial_tu_a(0,x;s) = - f(s,x) \\
		Pu_b &= 0 \qquad u_b(0,x) =0 \qquad \quad \partial_tu_b(0,x) = u_0 \\
		Pu_c &= 0 \qquad u_c(0,x) =0 \qquad \quad \partial_tu_c(0,x) = -P^1u_0 \\
		Pu_d &= 0 \qquad u_d(0,x) =0 \qquad \quad \partial_tu_d(0,x) = u_1.
	\end{align*}
	Note to find $\partial_tu(0,t)=u_1$, we use $Pu_b = 0$ to write $\partial_t^2 u_b = (\Delta + \partial_tP^1 +P^2)u_b. $
	We calculate $\hat{u}(\tau)$ by taking the time Fourier transform of each term in \eqref{unohat}. For the first term we switch the order of integration, change variables by $t \mapsto t+s$, then switch back the order of integration to find
	\begin{align*}
		\int_0^{\infty} e^{-it\tau} \int_0^t u_a(t-s,x;s) \dd s dt = \int_0^{\infty}e^{-it\tau} \beta(t,x;\tau) dt
	\end{align*}
	where $\beta(t,x; \tau) = \int_0^{\infty} e^{-is\tau}u_a(t,x;s) \dd s$. Since $\beta(t,x;\tau)$ satisfies
	\[ P\beta = 0, \qquad \beta(0,x;\tau) = 0, \qquad \partial_t\beta(0,x; \tau) = -\hat{f}(\tau,x) \]
	we have $\hat{\beta}(\tau,x;\tau) = Q_\tau \hat{f}(\tau).$
Applying the time Fourier transform to the remaining terms in \eqref{unohat} yields
		\[ \hat{u}(\tau) = \hat{\beta}(\tau, x ; \tau) + i\tau \hat{u}_b(\tau) + \hat{u}_c(\tau) + \hat{u}_d(\tau) = Q_\tau \Big(\hat{f}(\tau)-i\tau u_0 + P^1u_0 -u_1\Big) \]
as desired. This concludes the proof of \eqref{hatu} and thus the proof of the proposition.
	\end{proof}

If the weak local energy decay estimate \eqref{wled} also holds, then we are able to obtain $L^2$-based resolvent bounds which are stronger than those established in the proof of Proposition \ref{resex} and are uniform as $\Im \tau \to 0$. This makes it possible to extend the resolvent continuously to the real axis. The results proving the stronger $L^2$-based bounds and the continuous extension of $R_\tau$ to $\tau \in \R$  are established in \cite{tat2013} and are not affected by our change to the assumed rate at which the background geometry tends toward flat. Therefore we state the key results from \cite{tat2013} without proof.
	
The $\mathcal{LE}_{\tau}$ norm, in which we measure the resolvent $v = R_{\tau} g$, is defined by
	\begin{equation} 
		\|v\|_{\mathcal{LE}_{\tau}^N} = \| (|\tau| + \la r \ra^{-1})v \|_{\mathcal{LE}^{N}} + \| \nabla v \|_{\mathcal{LE}^{N}} + \|(|\tau| + \la r \ra^{-1})^{-1}\nabla^2 v \|_{\mathcal{LE}^{N}}. \label{letnorm}
	\end{equation}

	\begin{proposition}[see {\cite[Proposition 9 and Corollary 12]{tat2013}}] \label{resbnd}
		Assume \eqref{pcauchy} satisfies the uniform energy bounds \eqref{unifenbd} and the weak local energy estimate \eqref{wled}. If $\Im \tau \le 0$ and $g \in \mathcal{LE}^{*,N+4}$ for fixed $N \in \N$, then $v = R_{\tau} g$ satisfies
		\begin{equation} \label{resest}
			\|v\|_{\mathcal{LE}_\tau^N} \lesssim \|g\|_{\mathcal{LE}^{*,N+4}}.
		\end{equation}
	\end{proposition}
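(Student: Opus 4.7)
The plan is to derive the stationary resolvent bound \eqref{resest} from the time-dependent weak local energy decay \eqref{wled} via Fourier analysis in time. For $\Im \tau < 0$, existence of $R_\tau$ is already provided by Proposition \ref{resex}; the task is to produce the quantitative bound uniformly as $\Im\tau\to 0^-$, then extend to real $\tau\ne 0$.

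The central observation is that if $v = R_\tau g$, then $u(t,x) = e^{it\tau}v(x)$ formally solves $Pu = e^{it\tau}g$. Since this $u$ is not square integrable in time, I would localize by setting $f(t,x) = \chi_T(t)e^{it\tau_0}g(x)$ and letting $u$ solve $Pu = f$ with zero Cauchy data. By \eqref{hatures}, $\hat u(\tau) = R_\tau \hat f(\tau)$, where $\hat f$ is sharply concentrated near $\tau_0$ with width $\sim T^{-1}$. The weak LED gives
\[ \|u\|_{LE^{1,N}} \lesssim \|f\|_{LE^{*,N+3}}, \]
and Plancherel in time converts this into $L^2_\tau$ control of $R_\tau\hat f$ in the spatial $\mathcal{LE}$-based norms. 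Dividing by the mass of $\hat\chi_T$ and sending $T\to\infty$ extracts a pointwise-in-$\tau_0$ bound on $R_{\tau_0}g$. Because the $LE^{1,N}$ norm controls $\partial_t u$, $\nabla u$, and $\langle r\rangle^{-1} u$, and $\partial_t$ becomes multiplication by $i\tau$ under the Fourier transform, this procedure produces the $\|\nabla v\|_{\mathcal{LE}^N}$ and $\|(|\tau|+\langle r\rangle^{-1})v\|_{\mathcal{LE}^N}$ pieces of the $\mathcal{LE}_\tau^N$ norm simultaneously.

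The third piece, $\|(|\tau|+\langle r\rangle^{-1})^{-1}\nabla^2 v\|_{\mathcal{LE}^N}$, does not come from local energy decay directly but follows algebraically from the equation itself: rearranging $P_\tau v = g$ gives
\[ \Delta v = g - \tau^2 v - i\tau P^1 v - P^2 v, \]
and $L^2$-based Calder\'on--Zygmund estimates on dyadic annuli (where the weights $(|\tau|+\langle r\rangle^{-1})^{\pm 1}$ are essentially constant) combined with the already-controlled lower-order terms yield the desired bound on $\nabla^2 v$. Higher regularity $N$ is obtained by commuting $\partial^j$ through $P_\tau$, producing only lower-order perturbations that are controlled by the symbol class assumptions on the coefficients of $P$. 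The extension to $\Im\tau = 0$, $\tau\ne 0$, is then a weak-compactness argument along a sequence $\tau_n\to\tau$ from the lower half plane, with uniqueness of the limit established by applying the same a priori bound to $P_\tau w = 0$.

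The main obstacle I expect is the precise accounting of derivatives: the weak LED loses three derivatives, whereas \eqref{resest} loses four. The extra derivative likely appears in the cost of converting the $L^2_\tau$ Plancherel bound into pointwise-in-$\tau$ control (a Sobolev embedding in the time variable) and in absorbing commutator errors from the time cutoff $\chi_T$ as $T\to\infty$. Tracking exactly where that derivative is spent, and verifying uniformity of the limit $\Im\tau\to 0^-$, is the most delicate part of the argument.
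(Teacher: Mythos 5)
The paper does not prove this proposition; it is quoted verbatim from Tataru's 2013 paper (Proposition 9 and Corollary 12 there), with the remark that the argument does not depend on the rate at which the metric tends to flat and so transfers unchanged to the present setting. So there is no in-paper proof to compare against, and I am evaluating your sketch against the standard argument it is reconstructing.

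Your overall strategy — take the time Fourier transform, use Plancherel on a line $\Im\tau = -\varepsilon$ to convert the weak local energy estimate \eqref{wled} into stationary control of $\hat u(\tau) = R_\tau\hat f(\tau)$, recover the $(|\tau|+\la r\ra^{-1})^{-1}\nabla^2 v$ piece of the $\mathcal{LE}_\tau$ norm from elliptic regularity applied to the rearranged equation, commute $\partial^j$ through $P_\tau$ for higher $N$, and then perform a limiting absorption as $\Im\tau\to 0^-$ — is the right architecture, and it is consistent with the cited source. However, there is a genuine gap in the limiting absorption step as you have written it. You propose to establish uniqueness of the weak-$*$ limit ``by applying the same a priori bound to $P_\tau w = 0$.'' This does not work: the a priori bound you are constructing controls $R_\tau g$ for $\Im\tau < 0$ (where $R_\tau$ exists), and a difference $w = v - v'$ of two subsequential limits at $\Im\tau_0 = 0$ merely solves $P_{\tau_0}w = 0$ in the ambient space; it is not itself a resolvent of anything, so the estimate gives you nothing about it. Reading the estimate at $\Im\tau_0 = 0$ presupposes exactly the well-definedness you are trying to establish. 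The correct device here is the resolvent identity $R_{\tau}g - R_{\tau'}g = R_{\tau}\bigl((\tau'^2-\tau^2) + i(\tau'-\tau)P^1\bigr)R_{\tau'}g$, which shows the family $\{R_{\tau_n}g\}$ is Cauchy as $\tau_n\to\tau_0$ from below \emph{provided} one verifies the mapping property that the perturbation term lands in $\mathcal{LE}^{*,N+4}$; that verification is nontrivial because $\mathcal{LE}$ (a $\sup$ over annuli) and $\mathcal{LE}^*$ (a $\sum$ over annuli) are not duals in the naive sense, and is part of what the lost derivatives are paying for.

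A secondary issue: when you ``divide by the mass of $\hat\chi_T$ and send $T\to\infty$'' to pass from the Plancherel $L^2_\tau$ bound to a pointwise-in-$\tau_0$ bound, you are implicitly commuting the $T\to\infty$ limit with the $\sup_m$ built into the $\mathcal{LE}$ norm. Plancherel gives, for each fixed annulus $A_m$, an identity between the time integral over $A_m$ and the $\tau$-integral over $A_m$, and only then does one take $\sup_m$. Extracting a pointwise-in-$\tau_0$ statement from $\sup_m\|F(\cdot,m)\|_{L^2_\tau}\lesssim 1$ requires some uniformity in $m$ of the concentration of $\hat\chi_T(\cdot-\tau_0)R_\tau g$, which in turn requires a priori continuity of $\tau\mapsto R_\tau g$ (fine for $\Im\tau<0$ by holomorphy, but a point that should be flagged). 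These are not idle technicalities: tracking them is precisely where the extra derivative in $N+4$ versus $N+3$ is spent, and your attribution of the loss to ``Sobolev embedding in the time variable'' is a plausible guess but is not the same as an accounting.
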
	

	\begin{proposition}[see {\cite[Proposition 10 and Corollary 12]{tat2013}}] \label{resbnd2}
		Assume the Cauchy problem \eqref{pcauchy} satisfies the uniform energy bounds \eqref{unifenbd} and the local energy estimate \eqref{wled}.
		
		If $\Im \tau \le 0$, and $g \in \mathcal{LE^*}$ satisfies
		\begin{equation} \label{gijkbnd}
			\|T^i\Omega^jS^k g \|\lesn{} \lesssim  1, \quad i + 4j +16k < M 
		\end{equation}
		for some positive integer $M$, then
		\begin{equation} \label{vijkbnd}
			\| T^i\Omega^jS^k(R_\tau g) \|\letn{}  \lesssim 1, \quad i+4j + 16k < M-4.
		\end{equation}
	\end{proposition}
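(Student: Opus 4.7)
The plan is to derive \eqref{vijkbnd} from Proposition \ref{resbnd} by commuting the vector fields $T^i\Omega^jS^k$ through the resolvent. Writing $v = R_\tau g$ so that $P_\tau v = g$, and setting $\vijk := T^i\Omega^jS^k v$, the goal is to show by induction on the weighted degree $i+4j+16k$ that each $\vijk$ satisfies an equation of the form
\begin{equation*}
P_\tau \vijk \;=\; \sum_{i'+4j'+16k' \le i+4j+16k} c_{i'j'k'}\, T^{i'}\Omega^{j'}S^{k'} g \;+\; \sum_{i'+4j'+16k' < i+4j+16k} Q_{i'j'k'}\, v_{i'j'k'},
\end{equation*}
where each $Q_{i'j'k'}$ is a lower-order perturbation operator of the same structural type as $i\tau P^1 + P^2$, with coefficients in $\ls{-\kappa}$ and $\sr{-\kappa-2}$. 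Once this identity is established, Proposition \ref{resbnd} applied to $\vijk$, combined with the hypothesis \eqref{gijkbnd} on $g$ and the inductive bound on $v_{i'j'k'}$, closes the estimate; the $-4$ in \eqref{vijkbnd} is precisely the $+4$ derivative loss from Proposition \ref{resbnd}.

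The essential inputs are the commutator identities with $P_\tau = \tau^2 + \Delta + i\tau P^1 + P^2$. Since translations and rotations commute with $\tau^2 + \Delta$, the commutators $[T_a, P_\tau]$ and $[\Omega_{ab}, P_\tau]$ reduce to commutators with the perturbation $i\tau P^1 + P^2$, and because spatial and rotational differentiation preserves $\ls{-\kappa}$ and $\sr{-\kappa-2}$ (up to an extra power of $r^{-1}$), these commutators retain the structure \eqref{p1form}--\eqref{p2form}. The scaling field requires more care: using $[S_r, \Delta] = -2\Delta$ together with $[S_\tau, \tau^2] = 2\tau^2$ yields $[S, \tau^2+\Delta] = -2(\tau^2+\Delta)$, and since $r\partial_r$ preserves $\ls{-\kappa}$ and $\sr{-\kappa-2}$ while $\tau\partial_\tau(i\tau P^1) = i\tau P^1$, one obtains
\begin{equation*}
[S, P_\tau] \;=\; -2\, P_\tau \;+\; E_S,
\end{equation*}
where $E_S$ is again of the form $i\tau \tilde P^1 + \tilde P^2$. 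When acting on $v$, the $-2P_\tau$ piece yields simply $-2g$ by virtue of $P_\tau v = g$, so this term generates no new $v$-dependent remainder.

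With these identities the induction scheme is clear: expanding $T^i \Omega^j S^k P_\tau - P_\tau T^i \Omega^j S^k$ via repeated use of the three commutator identities produces the claimed equation for $P_\tau \vijk$, with each step either generating a source term $T^{i'} \Omega^{j'} S^{k'} g$ (whenever an $S$ is absorbed into the $-2P_\tau$ piece) or a lower-order operator acting on a $v_{i'j'k'}$ of strictly smaller weighted degree. The weights $4$ and $16$ in the counting are dictated by the $+4$ derivative loss in Proposition \ref{resbnd}: each rotation contributes $4$ to absorb this loss when re-entering the inductive loop, while each scaling requires additionally differentiating $R_\tau$ in $\tau$, and using $\partial_\tau R_\tau = -R_\tau (\partial_\tau P_\tau) R_\tau$ composes two resolvents so the cost becomes $4 \cdot 4 = 16$.

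The main obstacle I expect is the bookkeeping: verifying that every term $Q_{i'j'k'} v_{i'j'k'}$ produced in the expansion maps into $\mathcal{LE}^*$ with norm controlled by $\|v_{i'j'k'}\|_{\mathcal{LE}_\tau}$, ensuring the weighted degree strictly decreases at each commutation (so the induction terminates), and carefully tracking how $\tau\partial_\tau$ interacts with the unique $\tau$-dependent piece $i\tau P^1$ of $P_\tau$. The structural similarity of $E_S$ to the perturbation $i\tau P^1 + P^2$ means that at each step we remain within a fixed class of operators for which Proposition \ref{resbnd} applies uniformly; modulo these combinatorial details, the argument is an iterated application of that proposition.
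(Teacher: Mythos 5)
The paper does not actually prove this proposition; it is stated and used as an imported result, cited directly from Tataru's work, with the explicit remark that the results from \cite{tat2013} are ``not affected by our change to the assumed rate at which the background geometry tends toward flat'' and are therefore stated without proof. There is thus no internal proof to compare against. Evaluating your attempt on its own terms: the overall scheme (commute $T^i\Omega^j S^k$ through $P_\tau$, use the structure of the commutators, and iterate Proposition \ref{resbnd}) is the natural one and almost certainly what Tataru does, and the key algebraic identity $[S,P_\tau]=-2P_\tau+E_S$ is correctly isolated and correctly exploited (the $-2P_\tau$ piece is harmless since it produces only $-2g$ when acting on $v$).

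However, there are two concrete problems. First, your stated justification for the weight $16$ on the scaling field is wrong: you invoke $\partial_\tau R_\tau = -R_\tau(\partial_\tau P_\tau)R_\tau$ and claim ``the cost becomes $4\cdot 4 = 16$,'' but (a) this step is not even part of the commutation strategy you propose, where no differentiation of $R_\tau$ occurs, and (b) a double resolvent application would impose an \emph{additive} derivative loss ($4+4$), not a multiplicative one, so the arithmetic does not yield $16$. The weights $(1,4,16)$ are not ``dictated'' by the mechanism you describe. Second, the step you defer to ``bookkeeping'' is actually where the substance lies and is not clearly closeable as written. Bounding $\|Q\,v_{i'j'k'}\|_{\mathcal{LE}^{*,4}}$ with $Q$ a second-order perturbation (e.g.\ arising from $[\Omega, P^2]$) forces control of $\nabla^{\le 6}v_{i'j'k'}$. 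Viewed naively as extra $T$ vector fields, this becomes $v_{i'+6,\,j',\,k'}$, and for a commutation that removed one $\Omega$ (so $j'=j-1$) the weighted degree is $i+4j+16k+2$, which has \emph{increased}. The induction therefore does not obviously terminate on the weighted degree alone; one must exploit the decay of the coefficients and the $(|\tau|+\rr^{-1})^{\pm 1}$ weights inside the $\mathcal{LE}_\tau$ norm to trade extra spatial derivatives for negative powers of $r$, and this is exactly where the specific numerology $(1,4,16)$ and the shift $-4$ have to be verified rather than asserted. Your proposal identifies the right structural identities but does not supply the argument that makes the induction close.
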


\subsection{Strategy to Obtain Decay Rate}
To prove Theorem \ref{thetheorem}, we must prove pointwise decay rates for solutions to the homogeneous Cauchy problem \eqref{pcauchy} with $f=0$. Therefore by \eqref{hatures} we have
	\[
		u(t,x) = \frac{1}{\sqrt{2\pi}}\int_{\Im \tau =- \epsilon} R_\tau(-i\tau u_0 + P^1 u_0 -u_1) e^{it\tau} \dd \tau 
	\]
for $\epsilon >0$. The continuous extension of $R_\tau$ to $\tau \in \R$ allows us to take the limit as $\epsilon \to 0$ to obtain
	\begin{equation} \label{ufin}
		u(t,x) = \frac{1}{\sqrt{2\pi}}\int_{\R} R_\tau(-i\tau u_0 + P^1 u_0 -u_1) e^{it\tau} \dd \tau.  
	\end{equation}
The pointwise decay rates are obtained by writing $e^{it\tau} = \frac{d}{d\tau}(\frac{e^{it\tau}}{it})$ and integrating by parts. Thus the final decay rate will depend on how many times we can integrate by parts, which depends on the regularity of the resolvent. 

	We separate the solution $u$ into low and high frequency parts by defining
	\begin{equation} \label{uhigha}
	    u_{>1}(t,x) := \frac{1}{\sqrt{2\pi}}\int_\R \chi_{<1}(|\tau|)R_\tau(-i\tau u_0 + P^1 u_0 -u_1) e^{it\tau} \dd \tau 
	\end{equation}
and 
	\begin{equation} \label{ulowa} 
	    u_{<1}(t,x) := \frac{1}{\sqrt{2\pi}}\int_\R \chi_{>1}(|\tau|)R_\tau(-i\tau u_0 + P^1 u_0 -u_1) e^{it\tau} \dd \tau.
	\end{equation}
Recall in Theorem \ref{thetheorem} we assume the initial data satisfies $u_0 \in Z^{\nu+1,\kappa}$ and $u_1 \in Z^{\nu,\kappa+1}$. Here $\nu$ is a sufficiently large constant depending on $\kappa$, and $\kappa$ indicates the rate at which the background geometry tends toward flat. The assumptions on the initial data mean we can write
    \[ -i\tau u_0 +P^1 u_0 - u_1 = \tau g_{\kappa}^{\nu+1} + g_{\kappa+1}^\nu \]
for some $g_{\kappa}^{\nu+1} \in Z^{\nu+1,\kappa}$ and some $g_{\kappa+1}^\nu \in Z^{\kappa+1,\nu}$. In sections \ref{zerores} and \ref{lowfreqan} we analyze the resolvent $R_\tau g$ near 0 frequency for general $g$ in an appropriately defined function space.
\end{section}

\begin{section}{The Zero Resolvent} \label{zerores}
In this section we obtain an expansion of $R_0 g$ in powers of $\rr^{-1}$. In section \ref{lowfreqan} we will approximate $R_\tau g \approx R_0 g e^{-i\tau r}$ for small $\tau$ and calculate the error using this expansion. Note for the arguments in this and the following section we harmlessly assume $r \ge 2$, since the main theorem holds for $r \lesssim 1$ using weak local energy decay and Sobolev embeddings. 

In our argument establishing an expansion of $R_0g$ for large $r$ we will find
	\[ (-\Delta) (\chi_{>R}R_0g) = h + \chi_{>R/2}P^2 (\chi_{>R}R_0g) \]
where $\|h\|_{Z^{n,\lambda}} \lesssim \|g\|_{Z^{n+4,\lambda}}$. This motivates Lemma \ref{freeres}, where we will obtain an expansion of $(-\Delta)^{-1}g$ for $g \in Z^{n,\lambda}$. 

\begin{lemma} \label{freeres}
	Let $g \in Z^{n,\lambda}$ with $\lambda,n \in \N$. We have the following representation for $(-\Delta)^{-1}g$:
	\begin{equation} 
		(-\Delta)^{-1}g = \sum_{j=0}^{\lambda-2} \big( {c}_j \cdot \nabla^{j}\la r \ra^{-1} + {e}_j(r)\cdot (\nabla^j \la r \ra^{-1}) \la r \ra^{j-\lambda+1} \big) + {d}(r) \cdot \nabla^{\lambda-1} \la r \ra^{-1} + q(x) \label{freeexp}
	\end{equation}
	where the coefficients satisfy
	\begin{equation} 
			\sum_{j=0}^{\lambda-2} \Big( |c_j| + \|e_j\|_{\ell^1S(1)} \Big) + \|d\|_{L^{\infty}} + \|S_rd\|_{\ell^1S(1)} +  \| q\|_{Z^{n+2,\lambda -2}} \lesssim \|g\|_{Z^{n,\lambda}}.  \label{freebndfin}
	\end{equation}
	For $\lambda = 1$ we have only the last two terms in~\eqref{freeexp}. 
\end{lemma}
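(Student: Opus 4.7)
The plan is to start from the Newton potential
\[ (-\Delta)^{-1}g(x) = \frac{1}{4\pi}\int_{\R^3}\frac{g(y)}{|x-y|}\,dy \]
and perform a Taylor expansion of the kernel in $y$ up to order $\lambda-2$. I split the integration into the three regimes $|y|\le|x|/2$, $|x|/2<|y|<2|x|$, and $|y|\ge 2|x|$. In the near regime I use
\[ \frac{1}{|x-y|} = \sum_{j=0}^{\lambda-2}\frac{1}{j!}(-y\cdot\nabla_x)^j\frac{1}{|x|} + R_{\lambda-1}(x,y), \]
producing, after integration and relabeling, constant rank-$j$ tensor coefficients $c_j = \frac{1}{4\pi j!}\int_{\R^3}(-y)^j g(y)\,dy$ contracted against $\nabla^j |x|^{-1}$. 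These moments converge exactly up to order $\lambda-2$: the dyadic bound $\int_{A_m}|y|^{|\alpha|}|g(y)|\,dy \lesssim 2^{m(|\alpha|+1-\lambda)}\|g\|_{Z^{0,\lambda}}$ is $\ell^1_m$-summable precisely when $|\alpha|\le\lambda-2$, yielding $|c_j|\lesssim \|g\|_{Z^{n,\lambda}}$. The difference between $|x|^{-1}$ and $\la r\ra^{-1}$ is smooth and compactly supported, so it is absorbed into $q$.

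The radial tails $e_j(r)$ and $d(r)$ come from two sources. First, extending the truncated near-region integrals from $\int_{|y|<|x|/2}$ to $\int_{\R^3}$ produces the full moments $c_j$ at the cost of correction terms $\int_{|y|>|x|/2}y^j g(y)\,dy$, which depend on $r=|x|$ alone since the integration region is spherically symmetric. After factoring out the natural scaling weight $\la r\ra^{j-\lambda+1}$ the remaining tensor-valued coefficient lies in $\ell^1 S(1)$, with summability inherited directly from that of $g$; contracting with $\nabla^j\la r\ra^{-1}$ produces the $e_j$ profiles. Second, the far-zone integral $\int_{|y|\ge 2|x|}g(y)/|x-y|\,dy$ admits a complementary expansion in $|x|/|y|$ and feeds into the same $e_j$. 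The critical order $j=\lambda-1$ yields the top-level term $d(r)\nabla^{\lambda-1}\la r\ra^{-1}$: the moment diverges only marginally, giving $L^\infty$ control on $d$, while the scaling derivative $S_r d$ essentially evaluates $g$ on a dyadic sphere of radius $r$ and so returns to $\ell^1 S(1)$.

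The remainder $q$ collects the convolution of $g$ with the Taylor remainder $R_{\lambda-1}$, the intermediate-annulus contribution $|x|/2<|y|<2|x|$, and the smooth low-$r$ correction. The $|y|^{\lambda-1}$ factor in $R_{\lambda-1}$ combined with the Newton kernel yields $\la r\ra^{-(\lambda-2)}$ decay, matching $Z^{n+2,\lambda-2}$, with the two-derivative regularity gain coming from standard elliptic regularity. The vector fields $T,\Omega$ commute with $(-\Delta)^{-1}$, while $S_r$ commutes modulo a bounded operator, so vector-field counts transfer cleanly to each piece. The main obstacle I anticipate is the $\ell^1$ dyadic bookkeeping through the convolution: the kernel $|x-y|^{-1}$ couples the $x$-scale $2^m$ with the $y$-scale $2^{m'}$, and Schur-type estimates with the correct $2^{-c|m-m'|}$ gains are required to land in the appropriate summable spaces on both sides, as well as to verify that the weights $\la r\ra^{j-\lambda+1}$ in the $e_j$ terms are tuned so that no double counting occurs between the multipole, tail, and remainder contributions.
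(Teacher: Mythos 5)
Your plan is essentially the paper's: expand the Newton kernel in a Taylor series, extract the constant moments $c_j$, the radial tail corrections $e_j,d$, and absorb the near-diagonal and far-zone contributions together with the Taylor remainder into $q$. The paper organizes the decomposition dually — it dyadically decomposes $g$ into $g_m$ and splits each $v_m=(-\Delta)^{-1}g_m$ into a low part $\chi_{<m+2}(r)v_m$ (handled by integration by parts and Hardy, going into $q$) and a high part $\chi_{>m+2}(r)v_m$ (Taylor-expanded) — but the moments, corrections, and estimates are morally the same.

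Two points to tighten. First, your displayed Taylor formula carries only the remainder $R_{\lambda-1}\sim|y|^{\lambda-1}/|x|^{\lambda}$; with that remainder, the contribution to $q$ from output scale $2^{l}$ and source scale $2^{m}$ has no geometric decay in $l-m$, so the double dyadic sum diverges and the bound $\|q\|_{Z^{n+2,\lambda-2}}\lesssim\|g\|_{Z^{n,\lambda}}$ fails. You must carry the expansion through $j=\lambda-1$ (which you already do implicitly, since you assign the $d(r)$ term to that order) and retain the remainder $R_{\lambda}\sim|y|^{\lambda}/|x|^{\lambda+1}$, which gives the factor $2^{(m-l)(\lambda+1)}$ needed to close the sum. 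Second, the estimate for $q$ in $Z^{n+2,\lambda-2}$ is the real technical content and goes well beyond "vector fields transfer cleanly": the paper runs an induction on $n$ using $-\Delta q = g + \Delta(v-q)$ and a dyadic elliptic-regularity bound \eqref{ellregbnd}; the identity $[\Delta,S_r]=2\Delta$ produces lower-order terms that must be bootstrapped from the $n=0$ base case \eqref{freebnd}. Finally, your proposed complementary $|x|/|y|$ expansion in the far zone is unnecessary — the paper's direct $L^{2}$ estimate on $\sum_m v_m^{low}$ already handles the diagonal and far zones without any expansion there.
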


\begin{proof}
	Set $v = (-\Delta)^{-1}g$. We begin by proving if $\la r \ra^\lambda g \in \mathcal{LE}^*$, then $v$ can be expressed as in \eqref{freeexp} where the following estimate holds
			\begin{equation}
		\sum_{j=0}^{\lambda-2} |c_j| + \|e_j\|_{\ell^1S(1)} + \|d\|_{L^{\infty}} + \|S_rd\|_{\ell^1S(1)} + \sum_{i \le 2} \| \la r \ra^{-2+\lambda+i} \nabla^i q\|_{\mathcal{LE^*}} \lesssim \| \la r \ra^\lambda g \|_{\mathcal{LE^*}}. \label{freebnd}
	\end{equation}	
	We will then consider the case $g \in Z^{n,\l}$ and prove \eqref{freebndfin} using \eqref{freebnd} and elliptic regularity arguments.
	
	We have $ v(x) \cong \int g(y)\frac{1}{|x-y|} \dd y$ using the kernel for the fundamental solution of the Laplacian in $\R^3$. We wish to bound the coefficients of the representation for $v$ by the size of $\la r \ra^\l g$ measured in $\mathcal{LE}^*$, which is defined by the behavior of $g$ on dyadic regions $A_m$. This motivates the following decomposition of $g$:
	\[ g_m := \beta_{\approx m}(r) g \quad \hbox{ and } \quad v_m := (-\Delta)^{-1} g_m = \int g_m(y) \frac{1}{|x-y|} \dd y. \]
			
	We further decompose each $v_m$ into $\vmlow$ and $\vmhigh$ as follows
	\begin{equation}
	\begin{split} 
		\vmlow &:= \chi_{<m+2}(r) v_m = \chi_{<m+2}(|x|) \int \beta_{\approx m}(|y|) g(y) \frac{1}{|x-y|} \dd y
	\end{split} \label{vmldef}
	\end{equation}
and
	\begin{equation}
	\begin{split} 
		\vmhigh &:= \chi_{>m+2}(r) v_m = \chi_{>m+2}(|x|) \int \beta_{\approx m}(|y|) g(y) \frac{1}{|x-y|} \dd y.
	\end{split} \label{vmhdef}
	\end{equation}
Thus we have $ v \cong \sum_{m\ge 0} \vmlow + \vmhigh$.

	We claim 
	\begin{equation}
		\sum_{i\le 2} \Big\| \rr^{-2+\lambda+i}\nabla^i \sum_{m\ge 0} \vmlow \Big\|\les \lesssim \Big\|\rr^{\lambda} g \Big\|\les \label{vlowgoal}
	\end{equation} 
so that $\sum_{m\ge 0}\vmlow$ can be included in the $q(x)$ term in \eqref{freebnd} (we will see that $\sum_{m \ge 0}\vmhigh$ also generates a term that will be included in $q(x)$). To prove \eqref{vlowgoal} we begin by calculating
	\begin{align*}
		|(-\Delta) \vmlow| &\lesssim |g_m| + 2^{-m}|\chi'_{<m+2}(|x|)|\int \Big|\bm(\ry)g(y)\Big|\frac{1}{|x-y|^2} \dd y\\
				&\qquad + 2^{-2m}|\chi''_{<m+2}(\rx)|\int \Big|\bm(\ry)g(y)\Big|\frac{1}{|x-y|} \dd y\\
			&\lesssim |g_m| + 2^{-\frac{3m}{2}} |\chi'_{<m+2}(|x|)| \| \bm g \|_{L^2} + 2^{-\frac{3m}{2}}|\chi''_{<m+2}(\rx)| \| \bm g \|_{L^2}.
	\end{align*}
The second inequality is obtained using the Cauchy-Schwarz inequality and the fact that the cutoffs in $x$ and $y$ give $|x-y|^{-1} \lesssim 2^{-m}$. Integration by parts then yields $ \| \nabla^2 \vmlow \|_{L^2} \lesssim \| g_m \|_{L^2}$ (there are no boundary terms since $v_m^{low}$ is compactly supported). Next we use the Hardy inequality and the fact that $\vmlow$ is supported on a bounded region to find 
	\begin{equation} \label{poin}
	    \| \nabla^i \vmlow \|_{L^2} \lesssim 2^{m(2-i)}\|g\|_{L^2({A_{m}})}, \quad i = 0,1,2.
	 \end{equation}
	
	Note if $|x| \approx 2^k$ and $|y| \approx 2^m$ with $k \le m-1$, we have $|\nabla^i |x-y|^{-1}| \lesssim 2^{-m(1+i)}$ for $i = 0,1,2$ and $ \| \nabla^i \vmlow \|_{L^2(A_k)} = \| \nabla^i v_m \|_{L^2(A_k)}.$ Using the Cauchy-Schwarz inequality, we find
	\begin{equation}
	\begin{split}
		\| \nabla^i v_m \|_{L^2(A_k)} \lesssim 2^{\frac{3k}{2}} 2^{m(\frac{1}{2}-i)}  \| g\|_{L^2(A_{m})}, \quad i = 0,1,2
	\end{split} \label{vmak}
	\end{equation}
when $k \le m-1$. Now we use \eqref{poin} and \eqref{vmak} to calculate
	\begin{align*}
		\| \la r \ra^{-2+\lambda + i} \nabla^i \sum_m \vmlow \|\les &\lesssim \sum_m \sum_{k< m+2} 2^{\frac{k}{2}} 2^{k(-2+\lambda + i)} \| \nabla^i \vmlow \|_{L^2(A_k)}  \\
			&\lesssim \sum_m \sum_{k<m} 2^{k(\lambda + i)} 2^{m(\frac{1}{2}-i)} \| g \|\ltwoam\\
				&\qquad + \sum_m \sum_{k \approx m}  2^{m(-\frac{3}{2}+\lambda + i)} \| \nabla^i \vmlow\|_{L_2(A_{\approx m})}\\
			&\lesssim \|\rr^\lambda g \|\les
	\end{align*}
	for $i = 0, 1, 2$. This concludes the proof of \eqref{vlowgoal}.
	
	Next we turn our attention to $\sum_m v_m^{high}$. For each $m$ we integrate over $|y| \approx 2^m$ with $|x| \ge 2^{m+2}$, so $\frac{1}{|x-y|}$ is smooth in the region of integration and Taylor's theorem applies. We define $y^j$ as follows. If the $n^{th}$ component of $\nabla^{j}$ is $\partial_{i_1}\partial_{i_2}\cdots \partial_{i_j}$ then the $n^{th}$ component of $y^j$ is $y_{i_1}y_{i_2}\cdots y_{i_j}$. Note that $|y^j| \lesssim |y|^j$. In this notation, Taylor's theorem yields
	\[ \frac{1}{|x-y|} = \sum_{j=0}^{\lambda-1} \frac{\nabla^j |x|^{-1} \cdot (y)^j}{j!} + R_{\lambda}^{x}(y) \]
where 
	\[ R_{\lambda}^{x}(y) = \lambda !^{-1} \nabla^{\lambda}\left(|x-ty|^{-1}\right)\cdot (y)^\lambda \]
for some $t \in (0,1)$. Therefore 
	\begin{equation} \label{vmhighexp}
	    v_m^{high}(x) = \sum_{j=0}^{\lambda-1} j!^{-1} \chi_{>m+2}(|x|) \int g_m(y)  (\nabla^j |x|^{-1}) \cdot y^j  \dd y + \chi_{>m+2}(|x|) \int g_m(y) R_{\lambda}^{x}(y) \dd y.
	\end{equation}
	
	We claim the last term in \eqref{vmhighexp} can be included in $q$ after summing over $m$. In other words, we wish to show
	\begin{equation} 
		\sum_{i\le 2} \| \rr^{-2+\lambda+i} \nabla^i \ttt{q} \|\les \lesssim \|\rr^\lambda g\|\les, \label{tsremgoal}
	\end{equation}
where $\ttt{q}$ is defined by
	\begin{align*} 
	    \tilde{q} := \sum_m \tilde{q}_m, \quad \hbox{and} \quad \ttt{q}_m := \chi_{>m+2}(r) \int g_m(y) R_{\lambda}^{x}(y) \dd y.
	\end{align*}
Note $|\nabla^\lambda(|x|^{-1})| \lesssim |x|^{-\lambda-1}$ and $|x-ty|^{-1} \lesssim |x|^{-1}$ since $|x| \ge 2|y|$ and $t \in (0,1)$. It follows that
	\begin{equation}
		\left| R_{\lambda}^{y}(x) \right| \lesssim \frac{|y|^{\lambda}}{|x|^{\lambda+1}}, \label{rembnd}
	\end{equation}
which implies for $l>m+2$
	\begin{equation} \label{tqmlinf}
		\| \tilde{q}_m \|_{L^{\infty}(A_l)} \lesssim 2^{-l(\lambda+1)} \| r^\lambda g_m\|_{L^1} \lesssim 2^{(m-l)(\lambda+1)} \|\la r \ra^{\frac{1}{2}} g_m\|_{L^2}.  
	\end{equation}
Straightforward calculation yields
	\begin{equation} 
		\| \la r \ra^{-2+\lambda} \tilde{q} \|_{\mathcal{LE}^*} \le \sum_m \sum_{l>m+2} \| \tilde{q}_m \|_{L^{\infty}(A_l)} 2^{l\lambda}. \label{tqles}
	\end{equation}
Combining \eqref{tqmlinf} and \eqref{tqles} then gives $\| \la r \ra^{-2+\lambda}\tilde{q}(x) \|_{\mathcal{LE}^*} \lesssim \| g \|_{\mathcal{LE^*}},$ as desired. The estimates for $\nabla^i \tilde{q}$ for $i =1,2$ follow analogously once we note $|\nabla^i R_{\lambda}^{y}(x)| \lesssim \frac{|y|^{\lambda}}{|x|^{\lambda+i+1}}$.
	
    To handle the first term in \eqref{vmhighexp} we define
	\[ c_j := \sum_m (j!)^{-1} \int g_m(y)y^j \dd y, \qquad {e}_j(r) := \sum_m (j!)^{-1} \rr^{\lambda-1-j}(-\chi_{<m+2}) \int g_m(y)y^j \dd y,\] \[ \hbox{and } \quad d(r) := \sum_m ((\lambda-1)!)^{-1} \chi_{>m+2}(r) \int g_m(y) y^{\lambda-1} \dd y, \]
	so
	\begin{align*} 
		\sum_m \sum_{j=0}^{\lambda-1}& (j!)^{-1} \chi_{>m+2}(|x|) \int g_m(y)(\nabla^j |x|^{-1}) y^j \dd y\\
			&= \sum_{j=0}^{\lambda-2} \Big( {c}_j \cdot \nabla^j \rr^{-1} + {e}_j(r) \cdot (\nabla^j \rr^{-1})\rr^{j-\lambda +1} \Big) + {d}(r)\cdot \nabla^{\lambda-1} \rr^{-1}. 
	\end{align*} 
	The desired bounds \eqref{freebnd} then follow directly using the inequality
		$ \| \la r \ra^p g \|_{L^1(A_m)} \lesssim \| \la r \ra^{p+\frac{3}{2}} g \|_{L^2(A_m)}, $
which is a straightforward application of Cauchy-Schwarz.

	Now let $g \in Z^{n,\lambda}$. Since $\|\la r \ra^\lambda g\|_{\mathcal{LE}^*} \lesssim \|g\|\znl{n}{\lambda}$, the above argument shows that $v$ admits a representation as in \eqref{freeexp} such that \eqref{freebnd} holds. To prove \eqref{freebndfin} we need to show the $q$ term now satisfies
	\begin{equation}
			\| q\|\znl{n+2}{\l-2} \lesssim \|g\|\znl{n}{\l}. \label{qgoal}
		\end{equation}	
Observe that for any $\phi$ with sufficient differentiability we have
	\begin{equation} \label{vftot}
		|T^i\Omega^jS_r^k \phi| \lesssim \sum_{l=0}^{i+j+k} |\rr^l \nabla^l \phi|. 
	\end{equation}
By \eqref{freebnd} and \eqref{vftot} we find
    \begin{equation}
		\|q\|\znl{2}{\l-2} \lesssim \|g\|\les = \|g\|\znl{0}{\l}. \label{qprebnd}
	\end{equation}
Furthermore we can use the definition of $Z^{n,\lambda}$ and \eqref{vftot} to establish
    \begin{equation} \label{goalredux}
        \|q\|\znl{n+2}{\lambda -2} \lesssim \|q\|\znl{2}{\lambda-2} + \|\nabla^2 q\|\znl{n}{\lambda}.
    \end{equation}
	
	Next we calculate
	\begin{equation}
	\begin{split}
		\| \nabla^2 q &\|_{Z^{n,\lambda}} 
		    \lesssim \sup_{i + j +k \le n} \sum_m 2^{\frac{m}{2}} 2^{m\lambda} \|  T^i \Omega^j S_r^k  \nabla^2 ( \chi_{\approx m} q ) \|_{L^2(\R^3)} \\
		    &\approx \sup_{i + j +k \le n} \sum_m 2^{\frac{m}{2}} 2^{m\lambda} \|  T^i \Omega^j S_r^k  \Delta ( \chi_{\approx m} q ) \|_{L^2(\R^3)} \\
		    &\lesssim \supn{n} \sum_m 2^{m(\lambda +\frac{1}{2})} \Big( \|\vf \chi_{\approx m} \Delta q\|_{L^2(\R^3)} + \|\vf (\Delta \chi_{\approx m}) q \|\ltwor \\
				&\qquad + \| \vf (\nabla \chi_{\approx m}) \cdot \nabla q \|\ltwor  \Big)\\
			&\lesssim \supn{n} \sum_m 2^{m(\lambda +\frac{1}{2})} \Big(  \|\vf \chi_{\approx m} \Delta q\|_{L^2(\R^3)} + \|2^{-2m} \vf  \chi''\left( \frac{r}{2^m} \right) q \|\ltwor\\
				&\qquad + 2^{-m}\| \vf  r^{-1} \chi'\left(\frac{r}{2^m}\right) q \|\ltwor + 2^{-m}\| \vf  \chi'\left(\frac{r}{2^m}\right) \nabla q \|\ltwor \Big)\\
			&\lesssim \|\Delta q \|\znl{n}{\l} + \|\nabla q\|\znl{n}{\l-1} + \|q\|\znl{n}{\l-2} 
	\end{split} \label{ellregbnd}
	\end{equation}
for all $n \ge 1$. The second line in \eqref{ellregbnd} follows from 
	\[ \| T^i \Omega^j S_r^k \nabla^2 (\chi_{\approx m} q) \|_{L^2(\R^3)} \approx \| T^{\le i} \Omega^{\le j} S_r^{\le k} \Delta (\chi_{\approx m} q)\|_{L^2(\R^3)}, \]
which is obtained via integration by parts using the commutators 
	\[ [\partial_i, \Omega],  [ \partial_i, S_r] \in \hbox{ span } \{ T \}, \quad [ \Delta, \Omega ] = [\Delta, T] = 0, \quad \hbox{and} \quad [\Delta, S_r] = 2 \Delta . \]

    We claim
    \begin{equation}
		\|\Delta q\|\znl{n}{\l} \lesssim \|g\|\znl{n}{\l}. \label{deltaq}
	\end{equation}
Once \eqref{deltaq} is established, \eqref{qgoal} follows by induction in $n$. Indeed, assume \eqref{deltaq} holds. When $n=1$, the RHS of \eqref{ellregbnd} is controlled by $\|g\|\znl{1}{\l}$ using \eqref{deltaq} to bound the first term and \eqref{qprebnd} to bound the last two terms (along with the fact $ \|\nabla \phi \|\znl{n}{\l} \lesssim \|\phi\|\znl{n+1}{\l-1}$
for $\phi \in Z^{n+1,\l-1}$). Thus $\|\nabla^2q\|\znl{1}{\lambda} \lesssim \|g\|\znl{1}{\l}$, so by \eqref{qprebnd} and \eqref{goalredux} we obtain \eqref{qgoal} in the case $n=1$. The inductive step is proved in the same manner, with the inductive hypothesis being used to bound the last two terms on the RHS of \eqref{ellregbnd}.

    To prove \eqref{deltaq} we write
    \begin{equation} 
			-\Delta q = g + \Delta(v-q). \label{delqeq}
		\end{equation}
By \eqref{freeexp} we have
    \[ -\Delta(v-q) = -\Delta \left( \sum_{j=0}^{\lambda - 2}\Big( {c}_j \cdot \nabla^{j}\la r \ra^{-1} + {e}_j(r)\cdot (\nabla^j \la r \ra^{-1}) \la r \ra^{j-\lambda+1} \Big) + {d}(r) \cdot \nabla^{\lambda-1} \la r \ra^{-1}   \right).  \]
Direct calculation using \eqref{freebnd} yields
    \begin{align*}
		\|\Delta (c_j\nabla^j\rr^{-1})\|\znl{n}{\l} &\lesssim \| \rr^\l	g\|\les \\
		\|\Delta (e_j(\nabla^j\rr^{-1})\rr^{j-\l+1})\|\znl{n}{\l} &\lesssim \|\rr^\l	g\|\les \\
		\|\Delta (d\nabla^{\lambda-1}\rr^{-1})\|\znl{n}{\l} &\lesssim \|\rr^\l	g\|\les, 
	\end{align*}
so that $\|\Delta(v-q)\|\znl{n}{\l} \lesssim \|g\|\znl{0}{\l}$, which combined with \eqref{delqeq} gives \eqref{deltaq}. This concludes the proof of \eqref{qgoal} and thus the proof of the proposition.
\end{proof}

In Propositions \ref{propexp} and \ref{lowferr} we will need expressions for $(-\Delta)^{-1} g$ where $g \in S_{rad}(r^{-q})$ with $ q \ge 2$, which are stated in the following lemma.

\begin{lemma} \label{delrad}
	Let $g \in S_{rad}(r^{-q})$ and set $v = (-\Delta)^{-1} g$
	\begin{enumerate}
		\item If $q \ge 4$ then there exist a constant $c$ and an $e \in S_{rad}(1)$ such that
			\[ v = c \rr^{-1} + e(r) \rr^{-(q-2)}. \]
		\item If $q =3$ then there exists an $\epsilon \in S_{rad}(\ln r)$ such that
			\[ v =  \epsilon(r)\rr^{-1}. \]
		\item If $q = 2$, then $v \in S_{rad}(\ln r)$.		
	\end{enumerate} 
\end{lemma}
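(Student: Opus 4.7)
The plan is to reduce the equation $-\Delta v = g$ to a radial ODE. Writing $v=v(r)$ and using that the $3$D Laplacian on radial functions is $\Delta v = \frac{1}{r^2}(r^2 v')'$, the substitution $w = r v$ converts the equation to $-w''(r) = r g(r)$, whose particular solution vanishing at the origin admits the integral representation
\[
v(r) = \frac{1}{r}\int_0^r s^2 g(s)\, ds + \int_r^\infty s\, g(s)\, ds,
\]
whenever the integrals at infinity converge. This single formula will serve all three cases after appropriate rewriting.

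For case (1) with $q \ge 4$, both integrals converge absolutely. I would set $c := \int_0^\infty s^2 g(s)\, ds$ and use $\int_0^r = \int_0^\infty - \int_r^\infty$ to obtain
\[
v(r) = \frac{c}{r} + \int_r^\infty \Bigl(s - \frac{s^2}{r}\Bigr) g(s)\, ds.
\]
The bound $|g(s)| \lesssim s^{-q}$ shows the remainder is $O(r^{2-q})$ for large $r$, and near the origin $v$ is smooth. I would then define $e(r)$ to be $\langle r\rangle^{q-2}$ times the remainder (continued smoothly to the origin). Verifying $e \in S_{rad}(1)$ is direct: each application of $\partial_r$ produces either boundary terms $\pm r^{q-1}g(r)$ that cancel identically, or lowers the outer power of $r$ by one; combined with $|\partial^j g| \lesssim \langle r\rangle^{-q-j}$ this yields $|\partial_r^j e| \lesssim \langle r\rangle^{-j}$.

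For case (2) with $q = 3$, the constant $c$ diverges logarithmically since $s^2 g(s) \sim s^{-1}$ at infinity. I would keep the upper limit at $r$ and set
\[
\epsilon(r) := \int_0^r s^2 g(s)\, ds + r \int_r^\infty s g(s)\, ds,
\]
so that $v(r) = \epsilon(r)/r$. Direct estimation yields $|\epsilon(r)| \lesssim \log r$, and each derivative introduces an extra $r^{-1}$ factor (the boundary contributions again cancel), giving $\epsilon \in S_{rad}(\log r)$. For case (3) with $q = 2$, the integral $\int_r^\infty s g(s)\, ds$ may fail to converge outright. I would instead integrate the ODE $-w''(r) = r g(r)$ with $w'(R) = 0$ fixed at some $R \ge 1$. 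Since $|w''(r)| \lesssim \langle r\rangle^{-1}$, one obtains $|w'(r)| \lesssim \log r$, then $|w(r)| \lesssim r \log r$, so $v = w/r \in S_{rad}(\log r)$, with derivative bounds following from $v' = (w' - v)/r$.

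The main technical obstacle is the bookkeeping required to verify the symbol class memberships for $e$ and $\epsilon$ — that is, controlling arbitrarily many derivatives of these integrals-against-$g$ quantities. The key structural fact that makes this work is that differentiating the outer algebraic factors ($r^{q-2}$, $1/r$, etc.) and the integrals produces boundary terms that cancel exactly, so each derivative lowers the outer power of $r$ by one while the integrand bound $|\partial^j g| \lesssim \langle r\rangle^{-q-j}$ propagates through the tail integrals. This is the same mechanism used implicitly in Lemma~\ref{freeres}, specialized to the radial setting.
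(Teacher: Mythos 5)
Your approach is the same as the paper's: reduce $-\Delta v = g$ to the radial ODE $\partial_r^2(rv) = -rg$ and write $v$ as an explicit iterated integral against $g$. (After Fubini, your representation $v = r^{-1}\int_0^r s^2 g\,ds + \int_r^\infty sg\,ds$ is the same double integral the paper writes for cases 1 and 2, up to an apparent sign typo in the paper.) Your treatment of cases 1 and 2 is sound; the cancellation of the $\pm r^{q-1}g(r)$ boundary terms is the correct mechanism, and the symbol bounds $|\partial^j g| \lesssim \langle r\rangle^{-q-j}$ do propagate through the tails to give $e \in S_{rad}(1)$ and $\epsilon \in S_{rad}(\log r)$.

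Case 3 has a gap. Recall $\phi \in S(\log r)$ requires $|\partial_r^j \phi| \lesssim \langle r\rangle^{-j}$ for $j\ge 1$, with the logarithm appearing only at $j=0$. You establish $|w'| \lesssim \log r$ and $|v| = |w/r| \lesssim \log r$, and then invoke $v' = (w' - v)/r$; but the triangle inequality applied to that identity only yields $|v'| \lesssim (\log r)/r$, which is not strong enough. The identity is in fact adequate, but only because of a cancellation you do not note: setting $\psi := rw' - w = r^2 v'$, one has $\psi' = rw'' = -r^2 g$, so $\psi(r) = \psi(R) - \int_R^r s^2 g(s)\,ds = O(r)$ for $g \in S_{rad}(r^{-2})$, whence $|v'| = |\psi|/r^2 \lesssim r^{-1}$. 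Equivalently, the paper's formula $v = -\int_0^r s^{-2}\int_0^s \rho^2 g(\rho)\,d\rho\,ds$ gives $v'(r) = -r^{-2}\int_0^r \rho^2 g(\rho)\,d\rho$ directly, from which $|v'| \lesssim r^{-1}$ (and the higher-derivative bounds) read off without any hidden cancellation. As written, your argument does not reach the required bound; you should either observe the cancellation in $rw'-w$ or switch to the paper's integral representation for this case.

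A smaller point: your normalization $w'(R)=0$ for some $R\ge 1$ is not by itself a complete definition of $v$ (a second-order ODE needs a second condition, and smoothness of $v$ at $r=0$ should be arranged). The paper's formula $v = -\int_0^r s^{-2}\int_0^s \rho^2 g\,d\rho\,ds$ builds in the correct behavior at the origin automatically, since the inner integral is $O(s^3)$ as $s\to 0$.
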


    A full proof of Lemma \ref{delrad} is omitted. To prove parts $1$ and $2$, write $\partial_r^2(rv) = -rg$ so \[ rv = -\int_0^r \int_s^\infty \rho g(\rho) \dd \rho ds. \]
    Similarly, to prove part $3$, write
        \[ v= -\int_0^r s^{-2} \int_0^2 \rho^2g(\rho) \dd \rho ds.  \]

We now prove an analogue of Proposition \ref{freeres} with $\Delta^{-1}$ replaced by $R_0 = (\Delta + P^2)^{-1}$. Here we first see the role of long range metric behavior - the faster the metric tends to flat, the further we can go down in the expansion for $R_0g$.

\begin{proposition} \label{propexp}
	Let $g \in Z^{n+4,\lambda}$ with $\lambda, n \in \N$. Take $v = R_0 g$.  
	
	If $1 \le \lambda \le \kappa $, then for large $r$, $v$ can be written as in~\eqref{freeexp} where the following estimate holds
		\begin{equation} 
			\sum_{j=0}^{\l-2} \Big( |c_j| + \|e_j\|_{\ell^1S(1)} \Big) + \|d\|_{L^{\infty}} + \|S_rd\|_{\ell^1S(1)} +  \| q\|_{Z^{n+2,\lambda-2}} \lesssim \|g\|_{Z^{n+4,\lambda}}.  \label{zeroexpest}
		\end{equation} 
	
	If $\lambda = \kappa+1$, then for large $r$, $v$ can be written as in~\eqref{freeexp} where the following estimate holds
		\begin{equation} 
			  \sum_{j=0}^{\l-2} |c_j| + \|e_0\|_{S(1)} +  \sum_{j=1}^{\l-2} \|e_j\|_{\ell^1S(1)} + \|d\|_{L^{\infty}} + \|S_rd\|_{\ell^1S(1)} +  \| q\|_{Z^{n+2,\lambda-2}} \lesssim \|g\|_{Z^{n+4,\lambda}}.  \label{zeroexpest1}
		\end{equation} 
\end{proposition}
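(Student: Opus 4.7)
The plan is to compare $v := R_0 g$ against the free expansion from Lemma \ref{freeres}, treating $P^2$ as a perturbation that is small at spatial infinity because its coefficients decay like $r^{-\kappa}$. Since only the behavior for large $r$ matters (as noted at the start of the section), first localize by setting $w := \chi_{>R} v$ for a large constant $R$ to be chosen. Starting from $(\Delta + P^2)v = g$, a direct commutator computation gives
\begin{equation*}
(-\Delta) w = h + \chi_{>R/2}\,P^2 w, \qquad h := -\chi_{>R} g - [\Delta + P^2,\chi_{>R}]\,v,
\end{equation*}
and the key preliminary bound $\|h\|_{Z^{n,\lambda}} \lesssim \|g\|_{Z^{n+4,\lambda}}$ follows because the commutator is supported in the bounded annulus $r \approx R$, where Propositions \ref{resbnd} and \ref{resbnd2} supply the needed four-derivative-loss control of $v$ in terms of $g$.

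The next step is a fixed-point/Neumann-series argument: writing the equation as $w = (-\Delta)^{-1}h + K w$ with $K := (-\Delta)^{-1}\chi_{>R/2}\,P^2$, the operator $K$ becomes contractive on the appropriate weighted space for $R$ large enough, since $\chi_{>R/2}\,P^2$ inherits its smallness from the $\ls{-\kappa}$ and $\sr{-\kappa-2}$ symbol classes of the coefficients of $P^2$. Expanding $w = \sum_{k\ge 0} K^k (-\Delta)^{-1} h$, I would apply Lemma \ref{freeres} to $(-\Delta)^{-1}h$ to obtain the base expansion \eqref{freeexp} at level $\lambda$. Each subsequent iterate $K^k(-\Delta)^{-1}h$ carries an extra factor of $r^{-\kappa}$ (or $r^{-\kappa-2}$ for the $\sr{}$ and scalar pieces of $P^2$). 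The non-radial pieces of $K$ applied to an expansion term feed back through Lemma \ref{freeres}, preserving $\ell^1$ summability of coefficients; the radial pieces are inverted by Lemma \ref{delrad}, which in the range $q \ge 4$ routes their output back into the $c_j\nabla^j\rr^{-1}$ and $e_j(r)\,(\nabla^j\rr^{-1})\rr^{j-\lambda+1}$ slots of \eqref{freeexp}.

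The case split between \eqref{zeroexpest} and \eqref{zeroexpest1} arises from this iteration. When $1 \le \lambda \le \kappa$, every correction produced by $K$ lands strictly below scale $r^{-\lambda}$ after one pass through $(-\Delta)^{-1}$, so all $P^2$-generated contributions are absorbed into the remainder $q\in Z^{n+2,\lambda-2}$ without disturbing the $\ell^1 S(1)$ class of the $e_j$. At the threshold $\lambda = \kappa+1$, however, the radial part of $P^2$ acting on the leading $c_0\rr^{-1}$ produces a radial source of size $r^{-\kappa-3}$, and Lemma \ref{delrad}(1) (valid since $\kappa+3\ge 5$) returns $c\rr^{-1} + e(r)\rr^{-(\kappa+1)}$ with $e\in S_{rad}(1)$ but not in $\ell^1 S_{rad}(1)$. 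This contribution has to be placed in the $e_0$ slot at scale $\rr^{-\lambda+1} = \rr^{-\kappa}$, forcing the relaxation to $e_0\in S(1)$ in \eqref{zeroexpest1}; the higher $e_j$ slots ($j\ge 1$) retain their $\ell^1 S(1)$ status because they are generated by the $\ls{-\kappa}$ non-radial part of $P^2$ filtered through Lemma \ref{freeres}, which respects $\ell^1$ summability. The main obstacle is this bookkeeping: tracking exactly which slot of \eqref{freeexp} each iterated correction occupies, verifying that the coefficient classes ($c_j$ constants, $d\in L^\infty$ with $S_r d\in \ell^1 S(1)$, and the remainder $q\in Z^{n+2,\lambda-2}$) are preserved under the iteration, and isolating the single radial contribution at the threshold that forces the $\ell^1 S(1)\to S(1)$ downgrade of $e_0$.
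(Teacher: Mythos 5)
Your proposal follows essentially the same route as the paper: localize via $w = \chi_{>R} v$, establish $\|h\|_{Z^{n,\lambda}} \lesssim \|g\|_{Z^{n+4,\lambda}}$ from Propositions~\ref{resbnd}--\ref{resbnd2}, rewrite as $-\Delta w = h + \chi_{>R/2}P^2 w$, and treat $P^2$ as a small perturbation of the expansion in Lemma~\ref{freeres}, with the radial piece of $P^2 (c_0 \rr^{-1})$ at $\lambda = \kappa+1$ identified as the single non-perturbative term that demotes $e_0$ to $S(1)$ via Lemma~\ref{delrad}. Your Neumann-series framing is a cosmetic repackaging of the paper's bootstrap-by-absorption. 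One thing your sketch leaves implicit is how the iteration actually closes once $e_0$ is demoted: the paper verifies a modified weighted estimate in which $\|e_0\|_{S(1)}$ carries a factor $R^{-1/2}$, then checks (i) the constant $c$ produced by Lemma~\ref{delrad} from the radial source of size $r^{-\kappa-3}$ obeys $|c| \lesssim R^{-\kappa}$ because that source is supported in $r \gtrsim R$, so the $c_0$-slot feedback is still small, and (ii) the feedback from $\chi_{>R/2}e_0$ into the $e_0$-slot costs only $\log R$ against the $R^{-1}$ smallness, so the perturbation remains contractive in the adjusted norm. Without this quantitative accounting the Neumann series at the threshold slot does not obviously converge, so if you carry this out you should add that refinement to make the absorption rigorous.
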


	Before proving the proposition we provide a brief summary of the argument. Since we are concerned only with large $r$, we consider $\chi_{>R} v=:w$ and show
	\[ -\Delta w = h + \chi_{>R/2} P^2 w \]
	where $\| h\|_{Z^{n,\lambda}} \lesssim \| g \|_{Z^{n+4,\lambda}}$.
	We then use use Lemma \ref{freeres} to obtain the desired form for $w$. For $\lambda \le \kappa $, this works by showing that the above equation for $w$ is perturbative with respect to \eqref{freebndfin}. The $\l = \kappa+1$ case is similar to the case $\lambda \le \kappa$, but there is one term which fails to be perturbative. This non-perturbative term has the benefit of being radial and will be handled using Lemma \ref{delrad}.
\begin{proof}
	Set $w = \chi_{>R} v $ and write 
	\begin{equation} 
		P_0 w = \chi_{>R}g + [P_0, \chi_{>R}]v =: h. \label{p0wh}
	\end{equation}
	Recall that $P_0 = \Delta + P^2$ with $P^2$ as in \eqref{p2form}. Since $\chi_{>R} \equiv 0$ for $r<R$ and  $\chi_{>R} \equiv 1$ for $r>2R$, the commutator $[P_0,\chi_{>R}]$ is supported on $[R, 2R]$. Direct calculation yields  
		\begin{equation}
		\begin{split} 
			\| \la r \ra^\lambda [P_0,\chi_{>R}] v \|_{\LE^*} &\lesssim \sum_{m=\log R}^{\log 2R} \Big[ \| \la r \ra^{\frac{1}{2}+\lambda} (R^{-2} +R^{-1} \rho_\ell^{-\kappa-1} + R^{-2}\rho_\ell^{-\kappa}) v\|_{L^2(A_m)} \\
				&\qquad + \| \la r \ra^{\frac{1}{2}+\lambda}( R^{-1}+ R^{-1} \rho_\ell^{-\kappa}) \nabla v\|_{L^2(A_m)} \Big] \\
			&\lesssim_R \| \la r \ra^{-1} v\|\len{} + \| \nabla v\|\len{} \\
			&\lesssim \|v\|_{\mathcal{LE}_0}
		\end{split} \label{vle0}
		\end{equation}
where 
	\[ \|v\|_{\mathcal{LE}_0} = \|\rr^{-1}v\|\len{} + \| \nabla v\|\len{} + \|\rr \nabla^2 v\|\len{} \]
by the definition of the $\mathcal{LE}_\tau$ norm in \eqref{letnorm}. The constants in the inequalities in \eqref{vle0} depend on $R$, but this is not an issue since $R$ is fixed. Now \eqref{p0wh} and \eqref{vle0} yield
		\[ \| \la r \ra^\lambda h\|_{\mathcal{LE}^*} \lesssim \| \la r \ra^\lambda g\|\lesn{} + \| v \|_{\mathcal{LE}_0} \lesssim \| \la r \ra^\lambda g \|\lesn{4} \]
		since $\| v\|_{\mathcal{LE}_0} \lesssim \| \la r \ra g\|_{\mathcal{LE}^{*,4}}$ by Proposition \ref{resbnd}. 
		
		Similarly we find
		\begin{align*}
			\|&h\|\znl{n}{\l} \lesssim \|g\|\znl{n}{\l} + \supn{n} \|\rr^\l \vf [P_0,\chi_{>R}] v\|\les \\
				&\lesssim \|g\|\znl{n}{\l} + \supn{n} \|\rr^\l [P_0,\chi_{>R}] \vf  v\|\les + \|\rr^\l \big[\vf, \rr^\l [P_0,\chi_{>R}]\big]  v\|\les  \\
				&\lesssim_R \|g\|\znl{n}{\l} + \sup_{i \le n} \|\rr^\l T^i  v\|_{\mathcal{LE}^*([R,2R]) }\\
				&\lesssim_R \|g\|\znl{n}{\l} + \| v \|_{\mathcal{LE}_0^n}
		\end{align*}
where $\mathcal{LE}^*([R,2R])$ indicates the $\mathcal{LE}^*$ norm restricted to $R \le r \le 2R$. The last inequality follows due to the commutator $[P_0,\chi_{>R}]$ being supported on $[R,2R]$ and \eqref{vftot}. Thus by Proposition \ref{resbnd} we have
		\begin{equation} \label{hbndg}
			\| h\|_{Z^{n,\lambda}} \lesssim_R \| g\|_{Z^{n+4,\lambda}}. 
		\end{equation}

		Rewriting the equation $P_0 w = h$ using $P_0 = \Delta + P^2$, we obtain
	\begin{equation}
		-\Delta w = -h + \chi_{>R/2} P^2 w. \label{nearinfty}
	\end{equation}
Note $\chi_{>R/2}P^2 w = P^2 w$  because the support of $P^2 w$ is contained in the region where $\chi_{>R/2}=1$.
	
	By Lemma \ref{freeres}, $w$ can be written as in \eqref{freeexp} where \eqref{freebndfin} holds so we have
		\begin{equation} \label{uselem}
		\begin{split}
			&\sum_{j=0}^{\lambda-2} |c_j| + \|e_j\|_{\ell^1S(1)} + \|d(r)\|_{L^{\infty}} + \|S_rd\|_{\ell^1S(1)} + \| q \|_{Z^{n+2,\lambda-2}} \\
				&\qquad \le C \left\|  \chi_{>{\frac{R}{2}}}P^2 \left( \sum_{j=0}^{\lambda-2} \left( {c}_j \cdot \nabla^{j}\la r \ra^{-1} + {e}_j\cdot (\nabla^j \la r \ra^{-1}) \la r \ra^{j-\lambda+1} \right) + {d} \cdot \nabla^{\lambda-1} \la r \ra^{-1} + q \right) \right\|_{Z^{n,\lambda}}\\
				&\qquad \qquad + C_R \| h \|_{Z^{n,\lambda}}.
		\end{split} 		
		\end{equation}
If we can show
	\begin{equation} 
	\begin{split}
		&\left\| \chi_{>{\frac{R}{2}}}P^2 \left( \sum_{j=0}^{\lambda-2} \left( {c}_j \cdot \nabla^{j}\la r \ra^{-1} + {e}_j\cdot (\nabla^j \la r \ra^{-1}) \la r \ra^{j-\lambda+1} \right) + {d} \cdot \nabla^{\lambda-1} \la r \ra^{-1} + q \right) \right\|_{Z^{n,\lambda}}\\
		&\qquad \lesssim R^{-1}\left( \sum_{j=0}^{\lambda-2} |c_j| + \|e_j\|_{\ell^1S(1)} + \|d\|_{L^{\infty}} + \|S_rd\|_{\ell^1S(1)} + \| q \|_{Z^{n+2,\lambda-2}} \right) 
	\end{split} \label{pertest}
	\end{equation}
	then choosing $R$ sufficiently large allows us to bootstrap the perturbative term in \eqref{pertest}, and we can use \eqref{hbndg} to obtain
	\[ \begin{split}
		\sum_{j=0}^{\l-2} (|c_j| + \|e_j\|_{\ell^1S(1)}) + \|d(r)\|_{L^{\infty}} + \|S_rd\|_{\ell^1S(1)} + \| q \|_{Z^{n+2,\lambda-2}} \le (1-cR^{-1})^{-1} C_R  \| g \|_{Z^{n+4,\lambda}}
	 \end{split} \]
as desired. Thus we wish to show
	\begin{align}
		\| \chi_{>\frac{R}{2}} P^2 c_j\cdot \nabla^j \rr^{-1} \|\znl{n}{\l} &\lesssim R^{-1}|c_j|, \qquad 0 \le j \le \l -2 \label{cpert} \\
		\| \chi_{>\frac{R}{2}} P^2 e_j \cdot (\nabla^j \la r \ra^{-1}) \la r \ra^{j-\lambda+1}\|\znl{n}{\l} &\lesssim R^{-1} \|e_j\|_{\ell^1S(1)}, \qquad 1 \le j \le \l -2 \label{epert} \\
		\| \chi_{>\frac{R}{2}} P^2d \nabla^{\l-1}\rr^{-1}\|\znl{n}{\l} &\lesssim R^{-1}(\|d\|_{L^{\infty}} + \|S_rd\|_{\ell^1S(1)}) \label{dpert} \\
		\| \chi_{>\frac{R}{2}} P^2 q \|\znl{n}{\l} &\lesssim R^{-1} \|q\|\znl{n+2}{\l-2} \label{qpert}.
	\end{align}
We will see \eqref{epert}, \eqref{dpert}, and \eqref{qpert} hold for $\l \le \kappa+1$. And we will see \eqref{cpert}  holds for $\l \le \kappa+1$ when $1 \le j \le \l -2$. When $j = 0$, \eqref{cpert} holds only for $\l \le \kappa$. Our argument handling $c_0 \rr^{-1}$ when $\l = \kappa + 1$ will change the space we can assume $e_0$ is in, causing the difference in the result for $\l = \kappa +1$.
	
	First we prove \eqref{qpert}. Direct calculation yields
	\begin{equation}
	\begin{split}
		\|\la r \ra^\lambda \chi_{>R/2}  P^2 q\|\les &\lesssim \sum_{a \le 2} \| \chi_{>R/2} \la r \ra^\lambda \rho_\ell^{-\kappa -2 + a} \nabla^a q\|\les + \|\chi_{>R/2} \la r \ra^\lambda \rho_r^{-\kappa -2} \Delta_\omega q\|\les\\
				&\qquad + \|\chi_{>R/2} \la r \ra^\lambda \rho_r^{-\kappa -2}q \|\les 	\\
			&\lesssim  \sum_{a \le 2} \|\chi_{>R/2} \la r \ra^{\lambda-\kappa-2+a} \nabla^a q\|\les, \label{pertles}
	\end{split}
	\end{equation}
and we note \eqref{pertles} holds for any $q$ with enough regularity. Now we have
	\begin{equation}
	\begin{split}
		\| \rr&^\l \vf \chi_{>R/2}P^2 q\|\les \\
			&\lesssim \sum_{a \le 2} \| \rr^{\l-\kappa-2+a}\nabla^a \vf q\|_{\mathcal{LE}^*([\frac{R}{2},\infty))} + \| \rr^\l[\vf,\chi_{>R/2}P^2] q\|\les. 
	\end{split} \label{pertvf}
	\end{equation}
For the commutator $[\vf,\chi_{>R/2}P^2]$ we find
	\[ [\vf,\chi_{>R/2}P^2] = [T^i,\chi_{>R/2}]\Omega^jS_r^kP^2 + T^i\Omega^j[S_r^k,\chi_{>R/2}]P^2 + \chi_{>R/2}[\vf,P^2]. \]
The commutators $[ S_r^k, \chi_{>R/2}]$ and $[T^i,\chi_{>R/2}]$ are compactly supported and uniformly bounded in $R$. We note if $Q^2$ is an operator of the form
	\[
	\begin{split}
	    &Q^2 = \partial_i h^{ij} \partial_j + h^\omega \Delta_\omega + h_r + h_\ell,\\ 
	    &h^{ij} \in \ls{-\kappa},\quad  h^\omega, h_r \in S_{rad}(r^{-\kappa-2}), \quad \hbox{ and } \quad h_\ell \in \ls{-\kappa-2} ,
	\end{split}
	\]
then $ [\Gamma, Q^2] = Q^2 $ where we allow the precise form of $Q^2$ to change each time it appears. Since $P^2$ is an operator of the above form, it follows that $[\Gamma^n, P^2] = Q^2 \Gamma^{<n}$. Now we can use \eqref{pertles} to obtain
	\begin{align*}
		\sup_{i+j+k \le n} \| \rr^\l[\vf,\chi_{>R/2}&P^2] q\|\les\\ 
		    &\lesssim \sup_{i+j+k \le n-1} \sum_{a \le 2} \| \rr^{\l-\kappa-2+a}\nabla^a\vf q\|_{\mathcal{LE}^*([\frac{R}{2},\infty))},
	\end{align*}
which combined with \eqref{pertvf} yields
	\begin{equation} 
	\begin{split}
		\|\chi_{>R/2}P^2 q\|\znl{n}{\l} &\lesssim \|q\|_{Z^{n+2,\lambda-\kappa-2}([\frac{R}{2},\infty))} \lesssim R^{-1}\|q\|\znl{n+2}{\l-2},
	\end{split} \label{qend}
	\end{equation}
using $|\nabla q| \le  r^{-1} (|S_rq| + |\Omega q|)$ (which holds for general $q$), as desired.

    In the following calculations we will use the fact that if $\phi \in \ell^1S(1)$, then
    \begin{equation} \label{lemmaA3}
        \|(\partial^\alpha \phi)\rho^q\|_{Z^{n,\l}} \lesssim \sum_{|B|=|\alpha|}^{n+|\alpha|} \|\la r \ra^{|B|}\partial^B\phi\|_{L^\infty(A_m)}
    \end{equation}
    for $q \le -\l-2+|\alpha|$, which is established by direct calculation.

	To prove \eqref{dpert} we use \eqref{qend} to find
	\begin{align*}
		\|\chi_{>R/2}P^2 d(r)\nabla^{\l-1}&\rr^{-1}\|\znl{n}{\l}\\ 
		    &\lesssim \|\chi_{>R/2}P^2 d(r) \rho^{-\l} \|\znl{n}{\l}\\
			&= \|d(r) \rho^{-\l} \|_{Z^{n+2,\lambda-\kappa-2}([\frac{R}{2},\infty))}\\
			&\lesssim \| S_r (d \rho^{-\l}) \|_{Z^{n+1,\lambda-\kappa-2}([\frac{R}{2},\infty))} + \| \Omega (d \rho^{-\l}) \|_{Z^{n+1,\lambda-\kappa-2}([\frac{R}{2},\infty))} \\
				&\qquad + \| T (d \rho^{-\l}) \|_{Z^{n+1,\lambda-\kappa-2}([\frac{R}{2},\infty))} + \|d \rho^{-\l}\|_{Z^{0,\l-\kappa-2}([\frac{R}{2},\infty))} \\
			&\lesssim \| (S_r d) \rho^{-\l}\|_{Z^{n+1,\lambda-\kappa-2}([\frac{R}{2},\infty))} + \| d \rho^{-\l}\|_{Z^{n+1,\lambda-\kappa-2}([\frac{R}{2},\infty))}\\
			&\lesssim \sum_{b=0}^{n+1} \| (S_r d) \rho^{-\l}\|_{Z^{b,\lambda-\kappa-2}([\frac{R}{2},\infty))} + \| d \rho^{-\l}\|_{Z^{0,\lambda-\kappa-2}([\frac{R}{2},\infty))}\\
			&\lesssim R^{-1} \|S_r d\|_{\ell^1S(1)} + R^{-1}\|d\|_{L^{\infty}}.
	\end{align*}
The last inequality follows from \eqref{lemmaA3}.

	To prove \eqref{epert} we use \eqref{qpert} and \eqref{lemmaA3} to find
	\begin{equation} 
	\begin{split}
		\|\chi_{>R/2}P^2 e_j (\nabla^{j}\rr^{-1})\rr^{j-\l+1}\|\znl{n}{\l} &\lesssim R^{-1}\|e_j (\nabla^{j}\rr^{-1})\rr^{j-\l+1}\|\znl{n+2}{\l-2}\\ &\lesssim R^{-1}\|e_j\|_{\ell^1S(1)} 
	\end{split} \label{ereuse}.
	\end{equation}

	Finally we consider \eqref{cpert}. Using \eqref{qend} we find
	\begin{align*}
		\|\chi_{>R/2}P^2 c_j \nabla^j \rr^{-1}\|\znl{n}{\l} &\lesssim \|c_j \nabla^j \rr^{-1}\|_{Z^{n+2,\l-\kappa-2}([R/2,\infty))}\\
			&\lesssim |c_j| \|\rr^{\l-\kappa-3-j}\|_{\mathcal{LE}^*([R/2,\infty))}\\
			&\lesssim |c_j| \sum_{m>\log\frac{R}{2}} 2^{m(\l-\kappa-1-j)}.
	\end{align*}
If $\lambda < \kappa+1+j$, then this yields
	\[ \| \chi_{>R/2}P^2 c_j \nabla^j \rr^{-1} \|\znl{n}{\l} \lesssim R^{-1}|c_j| \]
as desired. Thus the $c_0$ term fails to be perturbative when $\l = \kappa+1$.
	
	A straightforward calculation yields
	$ |P^2 (c_0 \la r \ra^{-1})| \lesssim c_0 \rho_r^{-\kappa-3} + c_0 \rho_\ell^{-\kappa-3} $. Note that $\rho_\ell^{- \kappa-3} \in Z^{\nu,\kappa+1}$ for all $\nu$. We obtain decay as $R \to \infty$ so that
	$ \| \chi_{>R/2} c_0 \rho_\ell^{-\kappa-3} \|_{Z^{n,\lambda}} \lesssim o_R(1) |c_0|. $
Thus only the radial term $c_0 \rho_r^{-\kappa-3}$, which arises when the radial scalar term in $P^2$ lands on $c_0 r^{-1}$, fails to be perturbative. 

	To handle this piece we consider
	\[ -\Delta w = h \in S_{rad}(r^{-\kappa-3}), \qquad \hbox{supp }h \subseteq \{ r \ge \frac{R}{4}\}. \]
By Lemma \ref{delrad} we have that $w$ can be written as
	$ w = c \rr^{-1} + e(r) \rr^{-\kappa-1}. $
Using the fact $\partial_r^2(rw) = rh$, we see 
	$  \|e\|_{S(1)} \lesssim \| h\|_{S_{rad}(r^{-\kappa-3})}. $
Furthermore, since $\hbox{supp }h \subseteq \{ r \ge \frac{R}{4}\}$ we find for $s < \frac{R}{4}$
	\begin{align*}
		|Rw(R)| &= \left| \int_0^R \int_s^\infty \partial_r^2(rw) \dd r \dd s \right|\\
		 	&= \left| \int_0^R \int_s^{\infty} rh \dd r \dd s \right| \\
		 	&\le \int_0^R \int_{R/4}^{\infty} \rr^{-\kappa-2} \dd r \dd s \|h\|_{S_{rad}(r^{-\kappa-3})} \\
		 	&\lesssim R^{-\kappa}\|h\|_{S_{rad}(r^{-\kappa-3})}.
	\end{align*}
Therefore we have
	\[ |c| = |Rw(R) - e(R)R^{-\kappa}| \lesssim R^{-\kappa} \|h\|_{S_{rad}(r^{-\kappa-3})}. \]
It follows that $w = c \rr^{-1} + e(r) \rr^{-\kappa-1}$ satisfies the estimate
	\[ |c| + R^{-\frac{1}{2}}\|e\|_{S(1)} \lesssim R^{-\frac{1}{2}} \|h\|_{S_{rad}(r^{-\kappa-3})}. \]
	
	Using Lemma \ref{freeres} and the above calculations, we have if 
	\[ -\Delta w = h_1 + h_2, \qquad h_1 \in Z^{n,\l}, \quad h_2 \in \sr{-q} \hbox{ for } q \ge 4 \] 
where $\hbox{supp }(h_2) \subseteq \{ r \ge \frac{R}{4} \}$, then
$w$ can be written as in \eqref{freeexp} with
	\[ 
	\begin{split}
		|c_0| + R^{-\frac{1}{2}}\|e_0\|_{S(1)} + \sum_{j=1}^{\lambda-2} |c_j| +\|e_j\|_{\ell^1S(1)} + \|d(r)\|_{L^{\infty}} + &\|S_rd\|_{\ell^1S(1)} + \| q \|_{Z^{n+2,\lambda-2}}\\
		 	&\lesssim \| h_1 \|_{Z^{n,\lambda}}+ \| h_2 \|_{R^{\frac{1}{2}}S_{rad}(r^{-\lambda-2})}. 
	\end{split}	
	\]
	Note that $\ell^1S(1) \subset S(1)$ so that the change of space for $e_0$ as compared with Lemma \ref{freeres} causes no problem.
	We now have \eqref{nearinfty} is perturbative with respect to this estimate since
	\[ R^{-\frac{1}{2}}\| \chi_{<R/2}c_0 \rho_r^{-\kappa-3}\|_{S_{rad}(r^{-\kappa-3})} \lesssim R^{-\frac{1}{2}} |c_0|. \]
Furthermore, the $e_0$ terms remain perturbative since our estimate \eqref{ereuse} came with extra powers of $R^{-1}$ and $\| \chi_{>\frac{R}{2}} e_0 \|_{\ell^1S(1)} \lesssim \log R \| e_0\|_{S(1)}$. This concludes the proof of the proposition.
\end{proof}
\end{section}

\begin{section}{Low Frequency Analysis} \label{lowfreqan}
In this section we use Proposition \ref{propexp} to calculate the error in the estimate $R_\tau g \approx (R_0g)\enitr.$ Direct calculation yields for any function $\phi$
	\begin{equation} \label{ptauphie}
	\begin{split}
		P_\tau(\phi e^{-i\tau\rr}) &= [(\Delta +P^2)\phi]e^{-i\tau\rr} - 2i\tau\big[\drfracr \phi\big]e^{-i\tau\rr} \\
			&\qquad + \Big[\big(\tau(\rho_\ell^{-\kappa-1} + \rho_\ell^{-\kappa}\nabla) + \tau^2 \rho_\ell^{-\kappa} \big)\phi\Big]e^{-i\tau\rr}. 
	\end{split} 
	\end{equation}
Since $P_0 = (\Delta + P^2)$, \eqref{ptauphie} gives
\begin{equation} \label{ptauerr1} 
	\begin{split} 
		P_\tau \left( R_\tau g - (R_0 g) e^{-i\tau \la r \ra} \right) &= g(1 - e^{-i\tau \la r \ra}) - 2i\tau \big[ (\partial_r + \frac{1}{r}) R_0 g \big] e^{-i\tau \la r \ra} \\
		    & \quad +  \Big[ \big( \tau(\rho_\ell^{-\kappa-1} + \rho_\ell^{-\kappa}\nabla) + \tau^2 \rho_\ell^{-\kappa} \big)R_0g \Big] e^{-i\tau\rr}.
	\end{split}	
\end{equation}

Analyzing the right hand side of \eqref{ptauerr1} with $R_0g$ represented by the expansion found in Proposition \ref{propexp} will yield a useful form of the error in the low frequency estimate. Lemmas \ref{lmadrfracr} - \ref{lmaenaught} provide preliminary calculations that help us handle terms arising in the $-2i\tau \big[ \drfracr R_0g \big] \enitr$ piece in \eqref{ptauerr1}. In Proposition \ref{lowferr} we will use these lemmas to establish the error in the low frequency estimate.

\begin{lemma} \label{lmadrfracr}
    Let $n$ be a positive integer. Let $\varphi_0$ be given and assume $\varphi_a$ satisfies
        \begin{equation} \label{lmaass}
            -\frac{1}{2} \Delta \varphi_{a} = \drfracr\varphi_{a-1}
        \end{equation}
    and $\varphi_a \in S(r^{-1})$ for $1 \le a \le n$.
    Then
     \begin{equation} \label{help}
	  \begin{split}
		-2i\tau \big[ \drfracr\varphi_0& \big] e^{-i\tau\rr}\\ 
		    &= \sum_{a=1}^{n} \Big( -P_\tau((-i\tau)^a \varphi_a e^{-i\tau\rr}) + (\tau^a \zln{\kappa}{\nu} + \tau^{a+1}\zln{\kappa}{\nu}+ \tau^{a+2}\zln{\kappa-1}{\nu})\enitr \Big)\\
			&\qquad  - 2(-i\tau)^{n+1} \drfracr(\varphi_n)e^{-i\tau\rr}  
	    \end{split}
	    \end{equation}
    \end{lemma}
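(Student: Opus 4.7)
The plan is to iteratively apply identity \eqref{ptauphie} to $\phi = (-i\tau)^a \varphi_a$ for each $a = 1,\dots,n$ and telescope. Since $(-i\tau)^a$ is a scalar that commutes through $P_\tau$, one application yields
\[
P_\tau\bigl((-i\tau)^a \varphi_a \enitr\bigr) = (-i\tau)^a(\Delta + P^2)\varphi_a \enitr + 2(-i\tau)^{a+1}\drfracr \varphi_a \enitr + (-i\tau)^a E(\tau, \varphi_a)\enitr,
\]
where $E(\tau, \varphi_a) := \bigl(\tau \rho_\ell^{-\kappa-1} + \tau \rho_\ell^{-\kappa}\nabla + \tau^2 \rho_\ell^{-\kappa}\bigr)\varphi_a$. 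Substituting the hypothesis $\Delta \varphi_a = -2\drfracr \varphi_{a-1}$ turns the first right-hand term into $-2(-i\tau)^a \drfracr \varphi_{a-1}\enitr + (-i\tau)^a P^2 \varphi_a \enitr$, and rearrangement gives the one-step difference identity
\[
2(-i\tau)^a \drfracr \varphi_{a-1}\enitr - 2(-i\tau)^{a+1}\drfracr \varphi_a\enitr = -P_\tau\bigl((-i\tau)^a \varphi_a \enitr\bigr) + (-i\tau)^a \bigl(P^2 \varphi_a + E(\tau, \varphi_a)\bigr)\enitr.
\]

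Next, I would sum this identity over $a = 1,\dots,n$. The left side telescopes, collapsing to $2(-i\tau)\drfracr \varphi_0 \enitr$ at the $a=1$ end (which equals $-2i\tau \drfracr \varphi_0 \enitr$, the LHS of \eqref{help}) and a boundary remainder $\pm 2(-i\tau)^{n+1}\drfracr \varphi_n \enitr$ at the $a = n$ end. Transferring the boundary piece across the equality produces the skeletal form of \eqref{help}, modulo identifying the accumulated lower-order contributions with the symbol-class template $(\tau^a \zln{\kappa}{\nu} + \tau^{a+1}\zln{\kappa}{\nu} + \tau^{a+2}\zln{\kappa-1}{\nu})\enitr$.

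This final identification is the only real work. Using $\varphi_a \in S(r^{-1})$ together with the structural form \eqref{p2form} of $P^2$, one checks that $(-i\tau)^a P^2 \varphi_a$ carries a $\tau^a$ factor with spatial part decaying like $r^{-\kappa-3}$, which fits inside the $\tau^a \zln{\kappa}{\nu}$ slot. The three pieces of $(-i\tau)^a E(\tau, \varphi_a)$ produce respectively a $\tau^{a+1}$-contribution with spatial part in $\ell^1 S(r^{-\kappa-2})$ (from the $\tau \rho_\ell^{-\kappa-1}$ and $\tau \rho_\ell^{-\kappa}\nabla$ pieces acting on $\varphi_a$), fitting $\tau^{a+1}\zln{\kappa}{\nu}$, and a $\tau^{a+2}$-contribution with spatial part in $\ell^1 S(r^{-\kappa-1})$ (from $\tau^2 \rho_\ell^{-\kappa}$ acting on $\varphi_a$), fitting $\tau^{a+2}\zln{\kappa-1}{\nu}$. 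The main obstacle is purely clerical: tracking the regularity index $\nu$ uniformly in $a$ so that a single choice of $\nu$ accommodates every $a \le n$, and confirming the correct sign for the boundary term from the telescoping. No analytic difficulty arises beyond the symbol calculus already in force from Section 2.
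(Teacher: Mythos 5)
Your plan is correct and is essentially the iteration the paper itself runs: you derive the one-step identity from \eqref{ptauphie} and the hypothesis $-\tfrac12\Delta\varphi_a = (\partial_r+\tfrac1r)\varphi_{a-1}$, then telescope, which is exactly what the paper does by writing out two steps and saying ``repeating this process yields \eqref{help}''; your slotting of the $(-i\tau)^aP^2\varphi_a$ and lower-order pieces into the $\tau^a\zln{\kappa}{\nu}$, $\tau^{a+1}\zln{\kappa}{\nu}$, $\tau^{a+2}\zln{\kappa-1}{\nu}$ templates also matches the paper's bookkeeping. On the sign you flag as needing confirmation: the telescope as you set it up produces $+2(-i\tau)^{n+1}(\partial_r+\tfrac1r)\varphi_n\,e^{-i\tau\rr}$, and this agrees with the paper's own intermediate display at $n=2$ (which reads $+2i\tau^3(\partial_r+\tfrac1r)\varphi_2\,e^{-i\tau\rr} = +2(-i\tau)^3(\partial_r+\tfrac1r)\varphi_2\,e^{-i\tau\rr}$), so the minus sign on that term in the displayed \eqref{help} is a typo in the paper's statement rather than a gap in your reasoning.
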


\begin{proof}
    By \eqref{ptauphie} and \eqref{lmaass}, we have
	\begin{equation}
	\begin{split}	
		-2i\tau \big[ \drfracr \varphi_0 \big] e^{-i\tau\rr} &=i\tau(\Delta\varphi_1)\enitr \\
		    &= P_\tau(i\tau \varphi_1 e^{-i\tau\rr}) - 2\tau^2 \big[ \drfracr \varphi_1 \big]e^{-i\tau \rr}-i\tau (P^2\varphi_1)e^{-i\tau \rr}\\
			    &\qquad  + \tau \Big[ \big( \tau(\rho_\ell^{-\kappa-1} + \rho_\ell^{-\kappa}\nabla) + \tau^2 \rho_\ell^{-\kappa} \big)\varphi_1 \Big] e^{-i\tau\rr} \\
			&= i\tau P_\tau(\varphi_1 e^{-i\tau\rr}) - 2\tau^2 \big[ \drfracr \varphi_1 \big]e^{-i\tau \rr}\\
			&\qquad  + \Big( \tau \zln{\kappa}{\nu} + \tau^2 \zln{\kappa}{\nu} + \tau^3 \zln{\kappa-1}{\nu} \Big) \enitr.
	\end{split} \label{cpre}
	\end{equation}
The last inequality uses the fact that $\varphi_1 \in S(r^{-1})$ by assumption, and the term $\tau \zln{\kappa}{\nu}$ is limited in its decay by the $\rho_r^{-\kappa-2}$ term in $P^2$.

Repeating this argument, now for $2i\tau^2\big[ \drfracr i\varphi_1\big] e^{-i\tau\rr}$ with $\varphi_2$ satisfying $ -\frac{1}{2}\Delta\varphi_2 = \drfracr\varphi_1 $
and plugging the resulting expression into \eqref{cpre} we find
	\[ 
	\begin{split}
		-2i\tau \drfracr(\varphi_0) e^{-i\tau\rr} &= P_\tau\Big(i\tau \varphi_1 e^{-i\tau\rr} + \tau^2 \varphi_2 e^{-i\tau\rr}\Big) + 2i\tau^3 \big[ \drfracr \varphi_2 \big] e^{-i\tau\rr}\\
			&\qquad  + \Big(\tau\zln{\kappa}{\nu} + \tau^2 \zln{\kappa}{\nu} + \tau^3 (\zln{\kappa}{\nu} + \zln{\kappa-1}{\nu}) +\tau^4 \zln{\kappa}{\nu} \Big) \enitr
	\end{split}
	\]
as long as $\varphi_1, \varphi_2 \in S(r^{-1})$. 

Repeating this process yields \eqref{help} since $\varphi_a \in S(r^{-1})$ for $a \le n$ by assumption.
    \end{proof}

\begin{lemma} \label{keylapinv}
If $r \ge 2$, then for $j \in \N_+$
	\begin{equation}
		\drfracr(\nabla^j \rr^{-1}) = -\frac{1}{2} \Delta \Big( \sum_{k=0}^{j-1} \nabla^{j-1-k}\frac{x}{r} \nabla^k \rr^{-1} \Big). \label{drfrdel} 
	\end{equation}
\end{lemma}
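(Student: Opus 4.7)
The plan is to induct on $j$, reducing the identity to a scalar commutator formula relating $\drfracr$ and a first partial derivative. The essential observation is that for $r \ge 2$ we have $\rr = r$, so $r^{-1}$ is harmonic on this region, and because $[\nabla,\Delta]=0$ every component of $\nabla^{k}r^{-1}$ is harmonic there as well.

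The first step will be to establish the scalar commutator identity
\[
    \drfracr(\partial_i \psi) = \partial_i(\drfracr\psi) + \tfrac{1}{2}\tfrac{x_i}{r}\Delta\psi - \tfrac{1}{2}\Delta\!\left(\tfrac{x_i}{r}\,\psi\right),
\]
valid for any smooth scalar $\psi$. I would verify this by direct computation: writing $\partial_r = \tfrac{x_k}{r}\partial_k$ yields $[\partial_i,\partial_r]\psi = \tfrac{1}{r}\partial_i\psi - \tfrac{x_i}{r^2}\partial_r\psi$, and expanding $\Delta(\tfrac{x_i}{r}\psi)$ via the Leibniz rule together with $\Delta\tfrac{x_i}{r} = -\tfrac{2x_i}{r^3}$ matches the remaining terms after short bookkeeping. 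When $\psi$ is harmonic the identity collapses to
\[
    \drfracr(\partial_i\psi) = \partial_i(\drfracr\psi) - \tfrac{1}{2}\Delta\!\left(\tfrac{x_i}{r}\,\psi\right),
\]
and it continues to hold componentwise when $\psi$ is tensor-valued with harmonic components.

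The induction on $j$ is then essentially automatic. For the base case $j=1$, I apply the simplified identity with $\psi = r^{-1}$; since $\drfracr r^{-1} = 0$, the first term on the right drops and we recover~\eqref{drfrdel}. For the inductive step, I take $\nabla$ of the hypothesis for $j-1$ and use $[\nabla,\Delta]=0$ to obtain
\[
    \nabla\drfracr\nabla^{j-1}r^{-1} = -\tfrac{1}{2}\Delta \sum_{k=0}^{j-2}\nabla^{j-1-k}\!\left(\tfrac{x}{r}\nabla^{k}r^{-1}\right),
\]
then apply the simplified commutator identity componentwise with the harmonic tensor $\psi = \nabla^{j-1}r^{-1}$ to write $\drfracr\nabla^{j}r^{-1}$ as $\nabla\drfracr\nabla^{j-1}r^{-1} - \tfrac{1}{2}\Delta(\tfrac{x}{r}\nabla^{j-1}r^{-1})$. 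Adding these two relations and recognizing $\tfrac{x}{r}\nabla^{j-1}r^{-1}$ as the $k=j-1$ summand closes the induction.

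The only genuine computation is the scalar commutator identity in the first step; once that is in place, the harmonicity of every $\nabla^{k}r^{-1}$ combined with $[\nabla,\Delta]=0$ drives the induction without further surprises, so that initial algebraic identity is the only place I expect any friction.
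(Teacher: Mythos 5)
Your proposal is correct and follows essentially the same route as the paper: the scalar commutator identity you verify in step one is precisely $[\drfracr,\nabla]=-\tfrac12[\Delta,\tfrac{x}{r}]$ written componentwise, the base case uses $\drfracr r^{-1}=0$ and harmonicity just as the paper does, and your inductive step (apply $\nabla$ to the hypothesis, invoke $[\nabla,\Delta]=0$, use the commutator identity with the middle term killed by harmonicity, and identify $\tfrac{x}{r}\nabla^{j-1}r^{-1}$ as the $k=j-1$ summand) is the paper's argument in only slightly different dress.
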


\begin{proof}
Direct calculation yields $[\drfracr,\nabla]  = -\frac{1}{2} [\Delta, \frac{x}{r} ]$, and we use this identity to handle the $j=1$
case:
	\begin{align*}
		\drfracr \nabla \rr^{-1} = \nabla\drfracr r^{-1} + \frac{1}{2}\frac{x}{r} \Delta \rr^{-1} - \frac{1}{2} \Delta \Big(\frac{x}{r} r^{-1}\Big) = -\frac{1}{2} \Delta \Big(\frac{x}{r}\rr^{-1}\Big).
	\end{align*}

Now fix $J$ and assume \eqref{drfrdel} holds for $j = J-1$. We calculate
	\begin{align*}
		\drfracr \nabla^J \rr^{-1} &= \nabla \drfracr \nabla^{J-1} \rr^{-1} + \frac{1}{2} \frac{x}{r} \Delta (\nabla^{J-1}\rr^{-1}) - \frac{1}{2} \Delta \Big(\frac{x}{r} \nabla^{J-1}\rr^{-1} \Big) \\
			&= \nabla\Big(-\frac{1}{2} \Delta \sum_{k=0}^{J-2} \nabla^{J-2-k}\frac{x}{r}\nabla^k \rr^{-1} \Big) + 0 - \frac{1}{2}\Delta \Big(\frac{x}{r}  \nabla^{J-1} \rr^{-1}\Big) \\
			&= -\frac{1}{2} \Delta \sum_{k=0}^{J-1}\nabla^{J-1-k}\frac{x}{r} \nabla^k \rr^{-1}
	\end{align*}
for $r \ge 2$, as desired.
    \end{proof}

\begin{lemma} \label{drrvarphi}
	Let $a \in \N$ be fixed and let $\varphi^j$ be a family of functions indexed by $j$ which satisfy
	\begin{equation} \label{phijass}
		\drfracr \varphi^j = -\frac{1}{2} \Delta \sum_{k=a}^{j-1} \nabla^{j-1-k} \frac{x}{r} \varphi^k, \qquad j \ge a+1 
	\end{equation}
and $\drfracr \varphi^a = 0$. Then
	\begin{equation}
	\begin{split}
		\drfracr  \sum_{k=a}^{j-1} \nabla^{j-1-k} \frac{x}{r} \varphi^k
			&= -\frac{1}{2} \Delta \sum_{k=a+1}^{j-1}\nabla^{j-1-k} \frac{x}{r} \Big( \sum_{\ell=a}^{k-1} \nabla^{k-1-\ell}\frac{x}{r}  \varphi^\ell  \Big).
	\end{split} \label{dpre}
	\end{equation} 
    \end{lemma}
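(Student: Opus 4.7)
The plan is to prove \eqref{dpre} by induction on $j$, with the two algebraic inputs being the commutator identity $[\drfracr,\nabla]=-\frac{1}{2}[\Delta,\frac{x}{r}]$ that was already exploited in the proof of Lemma~\ref{keylapinv}, together with the elementary fact that $\drfracr$ commutes with multiplication by $\frac{x}{r}$. The latter follows from $\partial_r(\frac{x}{r})=0$:
\[
    \drfracr\Bigl(\tfrac{x}{r}\psi\Bigr) \;=\; \tfrac{x}{r}\bigl(\partial_r+\tfrac{1}{r}\bigr)\psi \;=\; \tfrac{x}{r}\,\drfracr\psi
\]
for any $\psi$. Between them these two identities let us push $\drfracr$ through the angular/derivative prefactor $\nabla^{j-1-k}\frac{x}{r}$ until it lands on $\varphi^k$, at which point the hypothesized recursion \eqref{phijass} converts the result into a Laplacian of a sum one level deeper.

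For the base case $j=a+1$, the sum on the left of \eqref{dpre} collapses to $\frac{x}{r}\varphi^a$, and the commutation observation together with $\drfracr\varphi^a=0$ gives $\drfracr(\frac{x}{r}\varphi^a)=\frac{x}{r}\drfracr\varphi^a=0$; the right-hand side of \eqref{dpre} is an empty sum, so both vanish.

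For the inductive step, abbreviate $S_j := \sum_{k=a}^{j-1}\nabla^{j-1-k}\frac{x}{r}\varphi^k$ and decompose $S_{j+1} = \nabla S_j + \frac{x}{r}\varphi^j$. On the second piece the commutation identity and hypothesis \eqref{phijass} combine to give
\[
    \drfracr\Bigl(\tfrac{x}{r}\varphi^j\Bigr) \;=\; \tfrac{x}{r}\,\drfracr\varphi^j \;=\; -\tfrac{1}{2}\,\tfrac{x}{r}\,\Delta S_j.
\]
On the first piece the commutator identity yields
\[
    \drfracr(\nabla S_j) \;=\; \nabla\,\drfracr S_j \;-\; \tfrac{1}{2}[\Delta,\tfrac{x}{r}]S_j \;=\; \nabla\,\drfracr S_j \;-\; \tfrac{1}{2}\Delta\bigl(\tfrac{x}{r}S_j\bigr) \;+\; \tfrac{1}{2}\,\tfrac{x}{r}\,\Delta S_j,
\]
and the two $\frac{1}{2}\frac{x}{r}\Delta S_j$ terms cancel, leaving $\drfracr S_{j+1} = \nabla\,\drfracr S_j - \tfrac{1}{2}\Delta(\tfrac{x}{r}S_j)$. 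Substituting the inductive hypothesis for $\drfracr S_j$ turns $\nabla\,\drfracr S_j$ into $-\frac{1}{2}\Delta$ applied to $\sum_{k=a+1}^{j-1}\nabla^{j-k}\frac{x}{r}(\cdots)$, while $-\frac{1}{2}\Delta(\frac{x}{r}S_j)$ supplies exactly the missing $k=j$ summand in the target expression. After reindexing, this is precisely \eqref{dpre} with $j$ replaced by $j+1$.

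The only real obstacle is the bookkeeping: one has to verify that the index shift caused by bringing a $\nabla$ outside — sending $\nabla^{j-1-k}$ to $\nabla^{j-k}$ — matches the increment in the outer summation range from $a+1\le k\le j-1$ to $a+1\le k\le j$, and that the boundary contribution $\frac{x}{r}\varphi^j$ fills in at $k=j$ without an off-by-one. Once this is checked the induction closes.
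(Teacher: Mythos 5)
Your induction-on-$j$ argument is correct, and it proves the same identity by a genuinely different route than the paper. The paper works directly with the full sum: it moves $\drfracr$ across $\nabla^{j-1-k}$ using the identity $[\drfracr,\nabla^b]=-\frac{1}{2}\sum_{\ell=1}^b\nabla^{b-\ell}[\Delta,\frac{x}{r}]\nabla^{\ell-1}$, substitutes the recursion into $\drfracr\varphi^k$, and then performs a fairly delicate double-sum reindexing to fuse the "commutator" piece with the "recursion" piece. Your argument instead isolates a single new summand at each step via $S_{j+1}=\nabla S_j+\frac{x}{r}\varphi^j$, commutes $\drfracr$ past one $\nabla$ and one $\frac{x}{r}$ factor, and observes that the cancellation $+\frac{1}{2}\frac{x}{r}\Delta S_j-\frac{1}{2}\frac{x}{r}\Delta S_j$ leaves exactly $\nabla\drfracr S_j-\frac{1}{2}\Delta(\frac{x}{r}S_j)$; then the inductive hypothesis for $\drfracr S_j$ plus the observation $T_{j+1}=\nabla T_j+\frac{x}{r}S_j$ (where $T_j$ denotes the double sum on the right of \eqref{dpre}) closes the induction. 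Both proofs rest on exactly the same two facts — $[\drfracr,\nabla]=-\frac{1}{2}[\Delta,\frac{x}{r}]$ and $\drfracr(\frac{x}{r}\,\cdot)=\frac{x}{r}\drfracr(\cdot)$ — but your version trades the paper's global reindexing for a local peeling-off step, which is cleaner to verify and less vulnerable to off-by-one errors. The one piece of bookkeeping you flagged as "the only real obstacle" does indeed check out: $\nabla T_j$ shifts $\nabla^{j-1-k}$ to $\nabla^{j-k}$ over $a+1\le k\le j-1$, and $-\frac{1}{2}\Delta(\frac{x}{r}S_j)$ supplies the $k=j$ term $\nabla^{0}\frac{x}{r}\bigl(\sum_{\ell=a}^{j-1}\nabla^{j-1-\ell}\frac{x}{r}\varphi^\ell\bigr)$, so the two pieces together give precisely $-\frac{1}{2}\Delta T_{j+1}$.
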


Note that if we define \[ \varphi_1^j := \sum_{k=a}^{j-1} \nabla^{j-1-k}\frac{x}{r} \varphi^k \]
then the lemma shows that $\varphi_1^j$ is a family of  functions indexed by $j$ which satisfy the assumptions of the lemma with $a$ replaced by $a+1$.

\begin{proof}
We calculate
	\begin{equation} \label{lemcal1}
	\begin{split}
	    \drfracr \sum_{k=a}^{j-1} \nabla^{j-1-k} \frac{x}{r}  \varphi^k
			&= \sum_{k=a+1}^{j-1} \nabla^{j-1-k} \frac{x}{r} \drfracr \varphi^k + \sum_{k=a}^{j-2} [\drfracr,\nabla^{j-1-k}] \frac{x}{r} \varphi^k\\
			&= -\frac{1}{2} \sum_{k=a+1}^{j-1}\sum_{\ell=a}^{k-1} \nabla^{j-1-k}\Big( \frac{x}{r} \Delta\Big) \nabla^{k-1-\ell} \frac{x}{r} \varphi^\ell\\
				&\qquad -\frac{1}{2} \sum_{k=a}^{j-2} \sum_{\ell=1}^{j-1-k} \nabla^{j-1-k-\ell} [\Delta, \frac{x}{r}] \nabla^{\ell-1}\frac{x}{r}\varphi^k.
	\end{split}
	\end{equation}
The first equality uses the assumption $\drfracr \varphi^a = 0$. The second equality uses \eqref{phijass} and the identity
	\begin{equation}
	\begin{split} 
		[\drfracr,\nabla^b] = \sum_{\ell=1}^{b} \nabla^{b-\ell}[\drfracr,\nabla]\nabla^{\ell-1} = -\frac{1}{2} \sum_{\ell=1}^b \nabla^{b-\ell}[\Delta, \frac{x}{r} ] \nabla^{\ell-1}.
	\end{split} \label{drfnab}
	\end{equation}

Changing the order of summation in the first term on the right hand side of \eqref{lemcal1} and simply switching the indexing labels in the second term yields
	\begin{align*}
		\drfracr \sum_{k=a}^{j-1} \nabla^{j-1-k} \frac{x}{r} \varphi^k
			&= -\frac{1}{2} \sum_{\ell=a}^{j-2}\sum_{k=\ell+1}^{j-1} \nabla^{j-1-k}\Big( \frac{x}{r} \Delta\Big) \nabla^{k-1-\ell} \frac{x}{r} \varphi^\ell\\
				&\qquad -\frac{1}{2} \sum_{\ell=a}^{j-2} \sum_{k=1}^{j-1-\ell} \nabla^{j-1-\ell-k} [\Delta, \frac{x}{r}] \nabla^{k-1}\frac{x}{r}\varphi^\ell\\
			&= -\frac{1}{2} \sum_{\ell=a}^{j-2}\sum_{k=\ell+1}^{j-1} \nabla^{j-1-k}\Big( \frac{x}{r} \Delta + [\Delta, \frac{x}{r}] \Big) \nabla^{k-1-\ell} \frac{x}{r} \varphi^\ell\\
			&= -\frac{1}{2} \Delta \sum_{k=a+1}^{j-1} \nabla^{j-1-k} \frac{x}{r} \sum_{\ell=a}^{k-1} \nabla^{k-1-\ell} \frac{x}{r} \varphi^\ell.
	\end{align*}
To obtain the second equality we redefine the $k$ index by $k \mapsto k+\ell$. To obtain the third equality we switch the order of summation. This completes the proof of the lemma. 
    \end{proof}

\begin{lemma} \label{lmagradrinv}
    Let $j \ge 0$ and $r \ge 2$. Then
   \begin{equation} \label{drphj}
	\begin{split}
		 -2i\tau &\drfracr (\nabla^j \rr^{-1} )e^{-i\tau\rr}\\
		 	 &= \sum_{a=1}^{j} \Big(- P_\tau((-i\tau)^a F_a^j e^{-i\tau\rr}) + (\tau^a \zln{\kappa}{\nu} + \tau^{a+1}\zln{\kappa}{\nu}+ \tau^{a+2}\zln{\kappa-1}{\nu})\enitr \Big) 
	\end{split}
	\end{equation}
	where $|F_a| \lesssim \rr^{-1}$ for $1 \le a \le j$. 
    \end{lemma}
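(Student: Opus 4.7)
The plan is to apply Lemma~\ref{lmadrfracr} with $n=j$, taking $\varphi_0 := \nabla^j \rr^{-1}$ and constructing the auxiliary functions $\varphi_1,\ldots,\varphi_j$ by iterating Lemma~\ref{drrvarphi}. Seed the iteration with the two-parameter family $\varphi_0^k := \nabla^k \rr^{-1}$ for $k \ge 0$: Lemma~\ref{keylapinv} immediately gives $\drfracr \varphi_0^k = -\frac{1}{2}\Delta \varphi_1^k$ where $\varphi_1^k := \sum_{\ell=0}^{k-1}\nabla^{k-1-\ell}\frac{x}{r}\varphi_0^\ell$, while for $r \ge 2$ the identity $\rr^{-1} = r^{-1}$ together with a direct computation yields $\drfracr \varphi_0^0 = 0$. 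Thus $\{\varphi_0^k\}_{k \ge 0}$ satisfies the hypotheses of Lemma~\ref{drrvarphi} with base index $a = 0$.

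For $n \ge 1$ the iteration then defines $\varphi_n^k := \sum_{\ell=n-1}^{k-1}\nabla^{k-1-\ell}\frac{x}{r}\varphi_{n-1}^\ell$ for $k \ge n$. The remark following Lemma~\ref{drrvarphi} propagates the hypotheses forward, so at every stage $\drfracr \varphi_n^k = -\frac{1}{2}\Delta \varphi_{n+1}^k$ for $k \ge n+1$, together with the boundary identity $\drfracr \varphi_n^n = 0$ (read off by setting $k = n+1$ in the conclusion of Lemma~\ref{drrvarphi}, where the right-hand sum is empty). Specializing the upper index to our fixed $j$, the sequence $\varphi_1^j,\ldots,\varphi_j^j$ satisfies $-\frac{1}{2}\Delta \varphi_a^j = \drfracr \varphi_{a-1}^j$ for $1 \le a \le j$, and crucially $\drfracr \varphi_j^j = 0$.

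The symbol-class hypothesis $\varphi_a^j \in S(r^{-1})$ of Lemma~\ref{lmadrfracr} will follow from a homogeneity count. On $r \ge 2$, each summand of $\varphi_a^j$ is a finite product of factors $\nabla^m \frac{x}{r}$ (homogeneous of degree $-m$) and $\nabla^p r^{-1}$ (homogeneous of degree $-1-p$), and an induction on $a$ shows each summand of $\varphi_a^j$ is homogeneous of degree $-(j-a+1) \le -1$ for $a \le j$. In particular $|\varphi_a^j| \lesssim \rr^{-1}$, and the analogous estimates on derivatives give $\varphi_a^j \in S(r^{-1})$.

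Finally, applying Lemma~\ref{lmadrfracr} with $n = j$ and $\varphi_a := \varphi_a^j$, and setting $F_a^j := \varphi_a^j$, the trailing remainder $-2(-i\tau)^{j+1}\drfracr(\varphi_j^j)e^{-i\tau\rr}$ in \eqref{help} vanishes identically because $\drfracr \varphi_j^j = 0$, and \eqref{drphj} follows. The main obstacle is the bookkeeping of the double indexing $(n,k)$ through the iteration and cleanly confirming the termination identity $\drfracr \varphi_j^j = 0$; once this is in place, the decay and symbol-class bounds for $F_a^j$ drop out transparently from the homogeneity count.
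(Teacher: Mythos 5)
Your proof is correct and follows essentially the same path as the paper's: seed the double-indexed family $\varphi_0^k = \nabla^k\rr^{-1}$, invoke Lemma~\ref{keylapinv} and the remark after Lemma~\ref{drrvarphi} to propagate the hypotheses of Lemma~\ref{lmadrfracr} through the iteration, observe the termination $\drfracr\varphi_j^j = 0$, and verify $\varphi_a^j \in S(r^{-1})$ by a homogeneity count (the paper simply records $\varphi_n^j \in S(r^{-j-1+n})$, which is the same fact). The only cosmetic difference is that the paper writes out $\varphi_j^j = (\tfrac{x}{r})^j\rr^{-1}$ to see the termination explicitly, whereas you read it off from the empty sum in \eqref{dpre}; both are fine.
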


\begin{proof}
    We define $\varphi_0^j := \nabla^j \rr^{-1}$. By Lemma \ref{keylapinv} $\varphi_0^j$ satisfies \eqref{phijass} with $a=0$. Furthermore $(\partial_r + \frac{1}{r})\varphi_0^0 = 0$, so the assumptions of Lemma \ref{drrvarphi} are satisfied. 
    
    Then defining $\varphi_1^j := \sum_{k=0}^{j-1}\nabla^{j-1-k}\frac{x}{r}\varphi_0^k$, we have
	\[ \drfracr\varphi_1^j = -\frac{1}{2} \Delta \sum_{k=1}^{j-1} \nabla^{j-1-k} \frac{x}{r} \varphi_1^k =: -\frac{1}{2} \Delta \varphi_2^j \]
by Lemma \ref{drrvarphi}. Now $\varphi_1^j$ satisfies \eqref{phijass} with $a=1$ and 
	\[ (\partial_r + \frac{1}{r})\varphi_1^1 = \drfracr\frac{x}{r} \rr^{-1} = 0 \] 
so we can iterate the process again. We define
		\begin{equation} 
			\varphi_\ell^j := \sum_{k=\ell-1}^{j-1} \nabla^{j-1-k} \frac{x}{r} \varphi_{\ell-1}^k \label{nextlemmaref}
		\end{equation}
and see $\drfracr \varphi_j^j = \drfracr \big( \frac{x}{r} \big)^j \rr^{-1} = 0$ so the assumptions of Lemma \ref{drrvarphi} are satisfied at each iteration and we find for $0 \le n \le j-1$
	\begin{align*}
		\drfracr \varphi_n^j &= -\frac{1}{2} \Delta \sum_{k=n}^{j-1} \nabla^{j-1-k} \frac{x}{r} \varphi_n^k =: -\frac{1}{2} \Delta \varphi_{n+1}^j.
	\end{align*}
We note  $\varphi_n^j \in S(r^{-j-1+n})$ so $\varphi_n^j \in S(r^{-1})$ for $n \le j$.  Now \eqref{drphj} follows by \eqref{help} since $\drfracr \varphi_j^j = 0$. 
    \end{proof}

We define $a \wedge b$ to be a smooth function such that $a \wedge b = \min(a,b)$ when $a \gg b$ or $b \ll a$.

\begin{lemma} \label{lmaenaught}
   Let $\varphi_0 \in S_{rad}(r^{-\kappa-1})$. Then
        \begin{equation} \label{3termb}
        \begin{split}
            2\tau&\big[\drfracr\varphi_0\big]\enitr\\
			&= \sum_{a=1}^{\kappa-1} \Big( P_\tau(\tau^a F_a \enitr) + (\tau^a \zln{\kappa}{\nu} + \tau^{a+1}\zln{\kappa}{\nu} + \tau^{a+1}\zln{\kappa-1}{\nu})\enitr \Big) + (\tau^\kappa \zln{1}{\nu} + \tau^{\kappa+1}\zln{0}{\nu})\enitr \\
				&\qquad - \tau^\kappa P_\tau\Big( (\rr^{-1}\epsilon_1(r\wedge|\tau|^{-1}) + \tau(\epsilon_2(r\wedge|\tau|^{-1} -\epsilon_2(|\tau|^{-1}))\enitr\Big).
        \end{split}
        \end{equation}  
    where $|F_a| \lesssim \rr^{-1}$ and $\epsilon_1,\epsilon_2 \in S(\log r)$.
    \end{lemma}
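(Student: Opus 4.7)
The plan is to iterate the identity of Lemma \ref{lmadrfracr} starting from $\varphi_0$ and to handle the last two steps by truncation at scale $|\tau|^{-1}$, since the sequence $\varphi_a$ develops logarithmic growth past the index $a = \kappa-1$.

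First, I define $\varphi_a$ inductively by $-\tfrac12 \Delta \varphi_a = \drfracr \varphi_{a-1}$ and describe each one using Lemma \ref{delrad}. A direct check in the region $r \ge 2$ shows that $\drfracr(c \rr^{-1}) = 0$ and $\drfracr(e(r)\rr^{-p}) \in S_{rad}(r^{-p-1})$ whenever $e \in S_{rad}(1)$ and $p \ge 1$. Since $\drfracr \varphi_0 \in S_{rad}(r^{-\kappa-2})$, part (1) of Lemma \ref{delrad} (valid because $\kappa+2 \ge 4$) yields $\varphi_1 = c_1 \rr^{-1} + e_1(r)\rr^{-\kappa}$ with $e_1 \in S_{rad}(1)$. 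Using that the $c_a\rr^{-1}$ summand is annihilated by $\drfracr$ at each step, I obtain by induction $\varphi_a = c_a \rr^{-1} + e_a(r) \rr^{-(\kappa - a + 1)}$ with $e_a \in S_{rad}(1)$ for all $1 \le a \le \kappa-1$. At $a = \kappa$, Lemma \ref{delrad} part (2) applies with $q = 3$ to give $\varphi_\kappa = \epsilon_1(r)\rr^{-1}$ with $\epsilon_1 \in S_{rad}(\log r)$, and at $a = \kappa+1$, part (3) applies with $q = 2$ to give $\varphi_{\kappa+1} =: \epsilon_2 \in S_{rad}(\log r)$.

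Next, I apply Lemma \ref{lmadrfracr} with $n = \kappa - 1$; the hypothesis $\varphi_a \in S(r^{-1})$ holds for each $1 \le a \le \kappa - 1$ because $\kappa - a + 1 \ge 2$, so the second summand in $\varphi_a$ decays at least like $\rr^{-2}$. Setting $F_a := \pm (-i)^a \varphi_a$ (with sign accounting for the $2\tau$ versus $-2i\tau$ convention) gives the stated sum from $a = 1$ to $a = \kappa - 1$ with $|F_a| \lesssim \rr^{-1}$, together with the error terms of the form $\tau^a \zln{\kappa}{\nu}$, $\tau^{a+1}\zln{\kappa}{\nu}$, and $\tau^{a+1}\zln{\kappa-1}{\nu}$, modulo a remainder proportional to $\tau^\kappa \drfracr \varphi_{\kappa-1} \enitr = -\tfrac12 \tau^\kappa \Delta \varphi_\kappa \enitr$ that still needs to be processed.

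The remaining work is to convert this $\tau^\kappa \Delta \varphi_\kappa \enitr$ remainder into the two trailing $P_\tau$ terms of the statement plus the errors $\tau^\kappa \zln{1}{\nu}\enitr$ and $\tau^{\kappa+1}\zln{0}{\nu}\enitr$. Because $\varphi_\kappa = \epsilon_1(r)\rr^{-1}$ is not in $S(r^{-1})$, I substitute the truncated version $\epsilon_1(r \wedge |\tau|^{-1})\rr^{-1}$; the discrepancy from truncation is supported at $r \gtrsim |\tau|^{-1}$ and, using the $\log$ saturation together with the fact that each derivative falling there gains a factor of $|\tau|$, is absorbed into $\tau^\kappa \zln{1}{\nu}\enitr$. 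Expanding $\Delta(\epsilon_1(r\wedge|\tau|^{-1})\rr^{-1})\enitr$ via \eqref{ptauphie} then produces $-P_\tau$ of the truncated function (with a $P^2$ contribution absorbed into $\tau^\kappa\zln{1}{\nu}\enitr$) plus a cross term proportional to $\tau \drfracr(\epsilon_1(r\wedge|\tau|^{-1})\rr^{-1})\enitr$ at order $\tau^{\kappa+1}$. One further iteration using $\varphi_{\kappa+1} = \epsilon_2$ truncated to $\epsilon_2(r\wedge|\tau|^{-1})$, with the $r$-constant $\epsilon_2(|\tau|^{-1})$ subtracted so that only the $r$-varying tail enters $P_\tau$, produces exactly the last term of \eqref{3termb} and leaves only an admissible $\tau^{\kappa+1}\zln{0}{\nu}\enitr$ error. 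The principal technical difficulty is precisely the bookkeeping at the truncation scale: the cutoff at $r \sim |\tau|^{-1}$ interacts with the cancellation $\drfracr(c\rr^{-1}) = 0$ that drove the clean iteration, and every commutator error from the cutoff and from $P^2$ acting on the logarithmically growing coefficients must be matched precisely against the function spaces $\zln{1}{\nu}$ and $\zln{0}{\nu}$ appearing in the statement.
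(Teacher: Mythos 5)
Your approach matches the paper's: iterate Lemma~\ref{lmadrfracr} using Lemma~\ref{delrad} to construct $\varphi_1,\dots,\varphi_{\kappa-1}$, recognize the logarithmic growth entering at indices $\kappa$ and $\kappa+1$, and handle these last two steps by truncating at scale $r\sim|\tau|^{-1}$. The explicit form $\varphi_a = c_a\rr^{-1} + e_a(r)\rr^{-(\kappa-a+1)}$ is a harmless variant of the paper's normalization $\varphi_a\in S_{rad}(r^{-\kappa-1+a})$: the $c_a\rr^{-1}$ summand is annihilated by $\drfracr$ (and is harmonic for $r\ge 2$, so may be dropped), and both versions give $\varphi_a\in S(r^{-1})$, which is all Lemma~\ref{lmadrfracr} requires.

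There is, however, a gap at the first truncation. With $\tilde\varphi_\kappa := \epsilon_1(r\wedge|\tau|^{-1})\rr^{-1}$, the discrepancy between $-\tfrac12\Delta\tilde\varphi_\kappa$ and $\drfracr\varphi_{\kappa-1}$ is $\chi_{>|\tau|^{-1}}\drfracr\varphi_{\kappa-1}$, a function in $S_{rad}(r^{-3})$ supported on $r\gtrsim|\tau|^{-1}$; you assert that after multiplying by $\tau^\kappa\enitr$ it lands in $\tau^\kappa\zln{1}{\nu}\enitr$. This fails: the $Z^{\nu,1}$ norm is controlled by $\sum_m\|\rr^{3/2}\,\cdot\,\|_{L^2(A_m)}$, and each dyadic block with $2^m\gtrsim|\tau|^{-1}$ contributes $\approx 2^{3m/2}\cdot 2^{-3m}\cdot 2^{3m/2}=1$, so the sum diverges and $\chi_{>|\tau|^{-1}}\drfracr\varphi_{\kappa-1}\notin Z^{\nu,1}$ for any fixed $\tau$. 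Your heuristic that ``each derivative falling there gains a factor of $|\tau|$'' controls the differentiated terms but not the undifferentiated tail, which is what dominates here. The needed observation is different: on the support one has $r^{-1}\lesssim|\tau|$, so an explicit power of $\tau$ can be extracted, giving $|\tau|^{-1}\chi_{>|\tau|^{-1}}\drfracr\varphi_{\kappa-1}\in\ls{-2}\subset Z^{\nu,0}$ uniformly in $\tau$ (now $|\tau|^{-1}\sum_{2^m\gtrsim|\tau|^{-1}}2^{-m}\approx 1$). The truncation error therefore sits in $\tau^{\kappa+1}\zln{0}{\nu}\enitr$, not in $\tau^\kappa\zln{1}{\nu}\enitr$, and the same device is needed at the $\epsilon_2$ truncation step.
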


\begin{proof}
    Since $\varphi_0 \in S_{rad}(r^{-\kappa-1})$, we see $\drfracr \varphi_0 \in \sr{-\kappa-2}$. Lemma \ref{delrad} implies
 \begin{align*}
 	\drfracr \varphi_{n} &= -\frac{1}{2}\Delta \varphi_{n+1}, \qquad  \varphi_{n} \in \sr{-\kappa-1+n}
 \end{align*}
for $0 \le n \le \kappa-2$.
Then by Lemma \ref{lmadrfracr} we have
	\begin{equation}
	\begin{split}
		-2i\tau&\big[(\partial_r +\frac{1}{r}) \varphi_0\big]\enitr\\
			&= \sum_{a=1}^{\kappa-1} \Big(- P_\tau((-i\tau)^a F_a \enitr) + (\tau^a \zln{\kappa}{\nu} + \tau^{a+1}\zln{\kappa}{\nu} + \tau^{a+1}\zln{\kappa-1}{\nu})\enitr \Big)\\
				&\qquad - 2(-i\tau)^\kappa \drfracr(\varphi_{\kappa-1}) \enitr. \label{estep1}
	\end{split}
	\end{equation}
Since $\varphi_{\kappa-1} \in \sr{-2}$, we see $ \drfracr \varphi_{\kappa-1} \in \sr{-3}$. By Lemma \ref{delrad} there exists an $\epsilon_1 \in S_{rad}(\log r)$ such that
	\[ -\frac{1}{2}\Delta \rr^{-1}\epsilon_1(r) = \drfracr \varphi_{\kappa-1}. \]
We wish to avoid the logarithmic growth in $r$, so we use the modified function
	\[ \tilde{\varphi}_\kappa := \rr^{-1} \epsilon_1(r\wedge |\tau|^{-1}). \]
Now we have
	\begin{align*}
		-\frac{1}{2}\Delta \tilde{\varphi}_\kappa = \chi_{<|\tau|^{-1}} \drfracr \varphi_{\kappa-1}
			= \drfracr \varphi_{\kappa-1} - \chi_{>|\tau|^{-1}} \drfracr \varphi_{\kappa-1}.
	\end{align*}
Using \eqref{ptauphie} we find
	\begin{equation}
	\begin{split}
		2\tau^\kappa \drfracr &\varphi_{\kappa-1} \enitr\\
			&= P_\tau \Big(\tau^\kappa(-\tilde{\varphi}_{\kappa})\enitr\Big) + 2 i \tau^{\kappa+1} \drfracr(\tilde{\varphi}_{\kappa})\enitr \\
				&\qquad + \tau^{\kappa}\Big(\tau(\rho_\ell^{-\kappa-1}+ \rho_\ell^{-\kappa}\nabla) + \tau^2\rho_\ell^{-\kappa}\Big)(\tilde{\varphi}_{\kappa}) \enitr + \tau^\kappa P^2(\tilde{\varphi}_{\kappa})\enitr \\
				&\qquad +2\tau^\kappa \chi_{>|\tau|^{-1}}\big[ \drfracr \varphi_{\kappa-1}\big] \enitr  \\
			&= P_\tau \Big(\tau^\kappa(-\tilde{\varphi}_{\kappa})\enitr\Big) + 2 \tau^{\kappa+1} \chi_{<|\tau|^{-1}} \rr^{-1}(\partial_r\epsilon_1(r))\enitr \\
				&\qquad + \Big(\tau^\kappa \zln{\kappa}{\nu}+ \tau^{\kappa+1}(\zln{\kappa-1}{\nu} + \zln{0}{\nu})  + \tau^{\kappa+2}\zln{\kappa-2}{\nu} \Big)\enitr.
	\end{split} \label{omg}
	\end{equation}
Here we used the fact $|\tau|^{-1}\chi_{>|\tau|^{-1}}\drfracr \varphi_{\kappa-1} \in \ls{-2}$ because $\drfracr \varphi_{\kappa-1} \in \sr{-3}$, and the cutoff function allows us to pull out a $\tau$ factor when summing in the $\ls{-2}$ norm.

	We still need to handle the term $2\tau^{\kappa+1}\chi_{<|\tau|^{-1}} \rr^{-1}(\partial_r\epsilon_1(r))\enitr$. By Lemma \ref{delrad}, there exists an $\epsilon_2 \in S_{rad}(\log r)$ such that
	\[ -\frac{1}{2}\Delta \epsilon_2 = \rr^{-1}(\partial_r\epsilon_1(r)) \in S_{rad}(r^{-2}). \]
To remove the logarithmic growth in $r$ we use the modified function
	\[ \tilde{\varphi}_{\kappa+1} := \epsilon_2(r\wedge |\tau|^{-1}) - \epsilon_2(|\tau|^{-1}) \]
and find
	\[ -\frac{1}{2}\Delta \tvk = \chi_{<|\tau|^{-1}} \rr^{-1}(\partial_r\epsilon_1(r)). \]
Then by \eqref{ptauphie} we have
	\begin{equation} \label{omg2}
	\begin{split}
		2\tau^{\kappa+1}&\chi_{<|\tau|^{-1}} \rr^{-1}(\partial_r\epsilon_1(r))\enitr\\
			&= P_\tau\Big(\tau^{\kappa+1}(-\tvk)\enitr\Big) + 2\tau^{\kappa+2} \drfracr(\tvk)\enitr \\
				&\qquad + \tau^{\kappa+1}\Big(\tau(\rho_\ell^{-\kappa-1}+ \rho_\ell^{-\kappa}\nabla) + \tau^2\rho_\ell^{-\kappa}\Big)(\tvk) \enitr + (P^2\tvk)\enitr\\
			&= P_\tau\Big(\tau^{\kappa+1}(-\tvk)\enitr\Big) + \tau^{\kappa+1} \zln{0}{\nu}\enitr  + \Big(\tau^{\kappa}\zln{\kappa-1}{\nu} + \tau^{\kappa+1} \zln{\kappa-1}{\nu}\Big)\enitr.
	\end{split} 
	\end{equation}
Combining \eqref{estep1}, \eqref{omg}, and \eqref{omg2} then yields \eqref{3termb}, as desired.
    \end{proof}

We are now ready to calculate the error in the approximation $R_\tau g \approx (R_0g)\enitr$, which is established in Proposition \ref{lowferr}. The precise form of the error depends on the regularity and decay assumed for $g$. 

\begin{proposition} \label{lowferr}
	Let $g_\lambda^\nu \in Z^{\nu,\lambda}$ with $1 \le \lambda \le \kappa +1$ and $\nu > 3\l$. Assume $|\tau| \lesssim 1$ and $\Im \tau \le 0$.
	\begin{enumerate}
	\item If $1 \le \lambda \le \kappa$ then
		\[ R_\tau g_\lambda^\nu = (R_0 g_\lambda^\nu) e^{-i\tau \rr} + R_\tau (\chi_{>|\tau|^{-1}}g_\lambda^\nu) + \sum_{m=1}^{\lambda-1} \tau^m(F_m+R_0\zln{\l-m}{\nu-3\l})e^{-i\tau \rr} + \tau^\lambda (R_\tau h_{\nu-3\l})  \]
		where $|F_m| \lesssim \la r \ra^{-1}$, $\zln{\l-m}{\nu-3\l} \in Z^{\nu-3\l,\l-m}$,  and $h_{\nu-3\l}$ satisfies
		\begin{equation} \label{hbnd}
			\sup_{i+j+k+q\le \nu-3\l} \| \la r \ra^q (\partial_r + i\tau)^q T^i\Omega^jS^k h_{\nu-3\l}\|\les \lesssim 1.  
		\end{equation}
	\item If $\lambda = \kappa+1$ then
	\[ \begin{split}
		R_\tau g_{\kappa+1}^{\nu} &= (R_0 g_{\kappa+1}^\nu) e^{-i\tau \la r \ra}  + R_\tau (\chi_{>|\tau|^{-1}} g_{\kappa+1}^\nu) + \sum_{m=1}^{\kappa} \tau^m (F_m + R_0\zln{\kappa+1-m}{\nu-3m}) e^{-i\tau r}   \\
		&\qquad + \tau^{\kappa} \epsilon(r,\tau) e^{-i\tau r} + \tau^{\kappa+1} (R_\tau h_{\nu-3\kappa-3})
		\end{split}
	\] 
	where $|F_m| \lesssim \la r \ra^{-1}$, $\zln{\kappa+1-m}{\nu-3m} \in Z^{\nu-3m,\kappa+1-m}$,  $\epsilon(r,\tau)$ is of the form
	\[ \epsilon(r,\tau) = \rr^{-1} \epsilon_1(r \wedge |\tau|^{-1}) + \tau\Big(\epsilon_2(r \wedge |\tau|^{-1}) - \epsilon_2(|\tau|^{-1}) \Big) \]
	with $\epsilon_1, \epsilon_2 \in S(\log r)$, and $h_{\nu-3\lambda}$ satisfies \eqref{hbnd} with $\l = \kappa+1$.
	\end{enumerate}
\end{proposition}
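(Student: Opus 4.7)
The starting point is the identity \eqref{ptauerr1}; inverting $P_\tau$ gives
\[
R_\tau g_\l^\nu - (R_0 g_\l^\nu)e^{-i\tau\rr}
= R_\tau\bigl[g_\l^\nu(1-e^{-i\tau\rr})\bigr]
- 2i\tau\, R_\tau\bigl[(\drfracr R_0 g_\l^\nu)e^{-i\tau\rr}\bigr]
+ R_\tau\bigl[\bigl(\tau(\rho_\ell^{-\kappa-1}+\rho_\ell^{-\kappa}\nabla)+\tau^2\rho_\ell^{-\kappa}\bigr)(R_0 g_\l^\nu)\, e^{-i\tau\rr}\bigr].
\]
The plan is to analyze the three right-hand side terms using the zero-resolvent expansion from Proposition \ref{propexp} and the identities of Lemmas \ref{lmagradrinv} and \ref{lmaenaught}, which rewrite the $\drfracr$-pieces as $P_\tau$-exact contributions (whose $R_\tau$-inverse produces the explicit $F_m e^{-i\tau\rr}$ in the statement) plus lower-order $\tau^a\zln{\cdot}{\cdot} e^{-i\tau\rr}$ remainders that then feed back into the $R_0\zeta$-structure.

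For the first term I would split $g_\l^\nu=\chi_{>|\tau|^{-1}}g_\l^\nu+\chi_{<|\tau|^{-1}}g_\l^\nu$. The large-$r$ piece times $1$ yields precisely the $R_\tau(\chi_{>|\tau|^{-1}}g_\l^\nu)$ summand, while the large-$r$ piece times $e^{-i\tau\rr}$ carries a factor $\rr^{-\l}\lesssim|\tau|^\l$ on its support and is absorbed into the final $\tau^\l R_\tau h_{\nu-3\l}$ tail. On the small-$r$ piece I Taylor expand $1-e^{-i\tau\rr}=-\sum_{m=1}^{\l-1}(-i\tau\rr)^m/m!+O((\tau\rr)^\l)$; each summand gives a $\tau^m R_\tau(\rr^m\chi_{<|\tau|^{-1}}g_\l^\nu)$ contribution which, once we write $\rr^m g_\l^\nu\in Z^{\nu-m,\l-m}$ and apply the primitive identity $R_\tau(z\, e^{-i\tau\rr})=(R_0 z)e^{-i\tau\rr}+(\text{higher }\tau\text{-order})$, becomes the advertised $\tau^m(R_0\zln{\l-m}{\nu-3\l})e^{-i\tau\rr}$, with the higher-order correction rolling into the tail.

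For the central term I substitute the Proposition \ref{propexp} expansion of $R_0 g_\l^\nu$ and treat each summand. The $c_j\nabla^j\rr^{-1}$ pieces (and analogously the $e_j(\nabla^j\rr^{-1})\rr^{j-\l+1}$ and $d\,\nabla^{\l-1}\rr^{-1}$ pieces) with $j\ge 1$ are converted by Lemma \ref{lmagradrinv} into $-P_\tau((-i\tau)^a F_a^j e^{-i\tau\rr})$ plus $\tau^a\zln{\kappa}{\nu}$-type remainders; inverting $P_\tau$ kills the first and delivers the explicit $F_m e^{-i\tau\rr}$, while the remainders are subsumed into the $R_0\zln{\l-m}{\nu-3\l}$ structure of the preceding paragraph. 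The $q$-piece is automatically perturbative thanks to its enhanced regularity $q\in Z^{n+2,\l-2}$. In case $\l=\kappa+1$ the radial top summand $c_0\rr^{-1}$ resists Lemma \ref{lmagradrinv} because iterating $\drfracr$ and $\Delta^{-1}$ keeps the data in $\sr{\cdot}$; here I invoke Lemma \ref{lmaenaught} with $\varphi_0$ built from this radial remainder, which supplies exactly the $\tau^\kappa\epsilon(r,\tau)e^{-i\tau r}$ correction with $\epsilon(r,\tau)=\rr^{-1}\epsilon_1(r\wedge|\tau|^{-1})+\tau(\epsilon_2(r\wedge|\tau|^{-1})-\epsilon_2(|\tau|^{-1}))$ in the stated form. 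The third right-hand side term is handled similarly: each $\tau$ or $\tau^2$ prefactor, combined with the $\rho_\ell$-decay and the $R_0 g_\l^\nu$ expansion, either contributes into the main $R_0\zeta$-sum at some order $m$ or is absorbed into $\tau^\l R_\tau h_{\nu-3\l}$.

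The main obstacle is the bookkeeping: each application of Proposition \ref{propexp} costs four Sobolev indices, each Lemma \ref{lmagradrinv} iteration costs a bounded number of vector-field derivatives, and each Taylor step trades a power of $\tau$ for a power of $\rr$; tracking these precisely produces the $\nu-3\l$ regularity loss advertised and verifies the bound \eqref{hbnd}, where the $(\partial_r+i\tau)^q$ norm is the natural one because $\partial_r$ acting on the $e^{-i\tau\rr}$-factors reproduces $-i\tau\, e^{-i\tau\rr}$. The $\l=\kappa+1$ case is additionally delicate because the top coefficient $e_0$ lies only in $S(1)$ rather than $\ell^1 S(1)$, so the $r\wedge|\tau|^{-1}$ truncation in Lemma \ref{lmaenaught} must be propagated carefully through the $R_\tau$-inversion in order for the logarithmic $\epsilon_1,\epsilon_2\in S(\log r)$ to appear exactly in the combination stated.
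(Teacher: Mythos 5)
Your overall strategy is aligned with the paper's: start from \eqref{ptauerr1}, split $g(1-e^{-i\tau\rr})$ via the cutoffs $\chi_{\lessgtr|\tau|^{-1}}$, substitute the expansion of Proposition~\ref{propexp} into the $\drfracr R_0 g$ term, absorb the $\rho_\ell$ remainders, and close by applying the proposition recursively at lower~$\lambda$ (the ``primitive identity'' you invoke is exactly the inductive application that the paper formalizes in \eqref{iemain1}--\eqref{iemain2}, with $\lambda=1$ as the base case which you should state explicitly). Your Taylor expansion of $1-e^{-i\tau\rr}$ is a mild variant of the paper's use of the bounded quotient $(1-e^{-i\tau\rr})/(\tau\rr)$ in \eqref{gsimp}; both produce a $\tau\,\zeta^{\nu}_{\lambda-1}$ contribution on $\{r|\tau|\lesssim 1\}$, though the paper's form avoids having to control a Taylor remainder directly in the $(\partial_r+i\tau)^q$-weighted norm of \eqref{hbnd}.

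There is, however, a concrete error in your treatment of the $\lambda=\kappa+1$ case. You claim the non-perturbative radial term is $c_0\rr^{-1}$ and propose to feed it into Lemma~\ref{lmaenaught} as $\varphi_0$. But for $r\ge 2$ one has $\drfracr\rr^{-1}=-r^{-2}+r^{-2}=0$, so the $c_0\rr^{-1}$ summand contributes \emph{nothing} to $-2i\tau\big[\drfracr R_0 g\big]e^{-i\tau\rr}$ — this is exactly the observation the paper uses to discard it. Moreover $c_0\rr^{-1}\in S_{rad}(r^{-1})$ does not meet the hypothesis $\varphi_0\in S_{rad}(r^{-\kappa-1})$ of Lemma~\ref{lmaenaught} (since $\kappa\ge 2$), so the argument as written cannot even be launched. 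The term that actually triggers the logarithmic $\epsilon(r,\tau)$ correction is $e_0\rr^{-\kappa-1}$, where — crucially — Proposition~\ref{propexp} only gives $e_0\in S(1)$ (not $\ell^1 S(1)$) when $\lambda=\kappa+1$, so the usual $\ell^1$ summation fails and one must use the purely radial bound $e_0\rr^{-\kappa-1}\in S_{rad}(r^{-\kappa-1})$. It is this function that satisfies the hypothesis of Lemma~\ref{lmaenaught} and produces the $\tau^\kappa\epsilon(r,\tau)e^{-i\tau r}$ term. You should replace your $c_0\rr^{-1}$ discussion with this $e_0\rr^{-\kappa-1}$ term; the surrounding reasoning (about $\drfracr$ and $\Delta^{-1}$ keeping radial data radial, producing the $S(\log r)$ functions) is correct once applied to the right object.
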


The term $\epsilon(r,\tau)$ in the statement of Proposition \ref{lowferr} for $\l = \kappa + 1$ arises due to the $e_0(r)$ term in Proposition \ref{propexp} in the $\l = \kappa + 1$ case. The final decay rate is ultimately determined by this $\epsilon(r,\tau)$ term.

We use $\zeta_\l^\nu$ to represent a function in $Z^{\nu,\l}$ and allow $\zln{\l}{\nu}$ to change from line to line. The purpose of this notation is to keep track of what function spaces each term in our calculations is in while reserving $\gln{\l}{\nu}$ to indicate the arbitrary but fixed function in $Z^{\nu,\l}$ given in the statement of the proposition. In general $Z^{N_1,L_1} \subset Z^{N_2,L_2}$ for $N_2 \le N_1$ and $L_2 \le L_1$. The monotonicity of $Z^{\nu,\l}$ allows us to collect terms with different regularity and decay and write them as one term: 
	\begin{equation} 
		\zln{\l_1}{\nu_1} + \zln{\l_2}{\nu_2} = \zln{\min(\l_1,\l_2)}{\min(\nu_1,\nu_2)}. \label{zmon}
	\end{equation}

\begin{proof}
    Define $\el$ to be the error associated to $R_\tau \gl$: $\el := R_\tau \gl - (R_0 \gl) e^{-i\tau \rr}.$ We begin by establishing a useful expression for $P_\tau(\el)$ using \eqref{ptauerr1}. Our expansion in Proposition \ref{propexp} shows $R_0\gl \in S(r^{-1}) + Z^{\nu-2,\l-2}$. It follows that
	\begin{equation} \label{lotsimp} 
	    \rho_\ell^{-\kappa-1}R_0\gl \in Z^{\nu-2,\kappa}, \quad \rho_\ell^{-\kappa}\nabla R_0\gl \in Z^{\nu-3,\kappa}, \quad \hbox{ and } \quad \rho_\ell^{-\kappa}R_0\gln{\l}{\nu} \in Z^{\nu-2,\kappa-1}.
	 \end{equation}

Furthermore we have
	\begin{equation}
		\gl-\gl\enitr = \chi_{>|\tau|^{-1}}(r)\gl + \tau \chi_{<|\tau|^{-1}}(r)\rr\gl\frac{1-\enitr}{\tau \rr} - \tau^\l \chi_{>|\tau|^{-1}}(r)\tau^{-\l}\gl\enitr. \label{gsimp}
	\end{equation}
We use \eqref{ptauerr1}, \eqref{lotsimp}, and \eqref{gsimp} to find        \begin{equation} \label{ptauerr} 
		\begin{split} 
				P_\tau(\el) &= \chi_{>|\tau|^{-1}}(r)\gl 
				    + \overbrace{\tau \Big(\chi_{<|\tau|^{-1}}(r)\rr\gl\frac{1-\enitr}{\tau r}\Big)}^{(I)} 
				    - \overbrace{\tau^\l \Big(\chi_{>|\tau|^{-1}}(r)\tau^{-\l}\gl\enitr\Big)}^{(II)}\\
					&\qquad - \underbrace{2i\tau\drfracr(R_0\gl)\enitr}_{(III)} 
					+ \underbrace{(\tau \zeta_\kappa^{\nu-3} + \tau^2 \zeta_{\kappa-1}^{\nu-2})\enitr}_{(IV)}.
			\end{split}	
		\end{equation}

    Next we claim that 
    \begin{equation} \label{hreq}
        \h_{\nu-3\l}=\zln{0}{\nu-3\l}\enitr
    \end{equation} 
    satisfies \eqref{hbnd}. Indeed, when $r \ge 2$, we find for any $\phi(x)$
	\begin{align*}
			S^k(\phi \enitr) &= (S_r^k \phi)\enitr, \qquad \Omega^j(\phi \enitr) = (\Omega^j \phi) \enitr\\
			|T^i (\phi \enitr)| &\lesssim \sum_{a=0}^i |(T^a \phi)\enitr|, \qquad (\partial_r +i\tau)^q (\phi \enitr) = (\partial_r^q \phi)\enitr.
		\end{align*}
Thus when $i + j + k + l \le \nu-3\l$ we have
	\[ \|r^l(\partial_r +i\tau)^l \vf (\zln{0}{\nu-3\l} \enitr)\|\les \lesssim \|T^i\Omega^jS_r^{k+l} \zln{0}{\nu-3\l}\|\les \lesssim 1. \]

Our formula for $\el$ will be recursive in $\l$, so we begin by calculating $\eln{1}{\nu}$ directly.
	
\noindent \textbf{Case 1: $\lambda =1$} 

	We calculate $\eln{1}{\nu}$ using \eqref{ptauerr} and the expansion for $R_0\gln{1}{\nu}$ given by Proposition \ref{propexp}:
		\begin{equation} 
			R_0g_1^\nu = d(r)\rr^{-1} + q \label{exp1}
		\end{equation}
where $d \in L^{\infty}$, $S_r d \in \ell^1S(1)$, and $q \in Z^{\nu-2,\l-2}$. Terms $II$ and $IV$ in \eqref{ptauerr} are readily seen to be of the form $\tau h_{\nu-3}$ using \eqref{hreq}. Term $I$ in \eqref{ptauerr} can also be included in $\tau h_{\nu-3}$. The cutoff function restricts $I$ to the region where $r|\tau|\lesssim 1$, so \eqref{hbnd} reduces to $h_{\nu-3} \in Z^{\nu-3,0}$. Since $r \gln{1}{\nu} \in Z^{\nu,0}$,	
	\[ \partial_r\Big( \frac{1-\enitr}{\tau r^n} \Big) = \frac{i\enitr}{r^n} -n\frac{1-\enitr}{\tau r^{n+1}}, \qquad \partial_r\Big(\frac{i\enitr}{r^n}\Big) = -n\frac{i\enitr}{r^{n+1}} + \frac{\tau\enitr}{r^n}, \]
and
	\[ S \Big( \frac{1-\enitr}{\tau r} \Big) = 0, \]
we have 
	\[ \rr \gln{1}{\nu} \frac{1-\enitr}{\tau \rr} \in Z^{\nu,0} \subset Z^{\nu-3,0}, \] 
as desired. Here and throughout we harmlessly assume $r \ge 2$.  We note the above calculations and \eqref{gsimp} imply
	\begin{equation}
		R_\tau(\gln{1}{\nu}(1-\enitr)) = R_\tau(\chi_{>|\tau|^{-1}} \gln{1}{\nu}) + \tau (R_\tau h_{\nu}). \label{itl1}
	\end{equation}
This equation will help us handle terms that will arise when $\l \ge 2$ by providing a base case for an inductive argument. 

	For term $III$  in \eqref{ptauerr}, we use \eqref{exp1} and write $\partial_r = r^{-1} S_r$ to find $ \drfracr R_0\gln{1}{\nu} \in Z^{\nu-3,0}$, so this term also satisfies \eqref{hbnd}.

    Combining our calculations for terms $I, II, III,$ and $IV$ and applying $R_\tau$ to both sides of \eqref{ptauerr} yields
	\begin{equation} 
		E_1^\nu = R_\tau(\chi_{>|\tau|^{-1}}g_1^\nu) + \tau R_\tau h_{\nu-3} \label{l1res}
	\end{equation}
as desired.

\noindent \textbf{Case 2: $2 \le \lambda \le \kappa+1$}
	We proceed as in the $\l =1$ case. For term $I$ in \eqref{ptauerr}, we see  $\rr\gln{\l}{\nu} \frac{1-\enitr}{\tau \rr} \in Z^{\nu,\l-1} $ uniformly in $\tau$ as $\tau \to 0$. So we can write
	\begin{equation}
		I = \tau \chi_{<|\tau|^{-1}}(r) \rr \gln{\l}{\nu} \frac{1-\enitr}{\tau \rr} = \tau \chi_{<|\tau|^{-1}}(r) \zeta_{\l-1}^{\nu}. \label{bterm}
	\end{equation}
	
	For term $II$ in $\eqref{ptauerr}$, we note $\chi_{>|\tau|^{-1}}(r) \tau^{-\l}\gln{\l}{\nu} \in Z^{\nu,0}$ uniformly in $\tau$ as $\tau \to 0$. So we can write
	\begin{equation}
		 II = \tau^\l \chi_{>|\tau|^{-1}} \tau^{-\l}\gln{\l}{\nu} \enitr = \tau^\l \zeta_{0}^{\nu} \enitr. \label{cterm}
	\end{equation}
Substituting \eqref{bterm} and \eqref{cterm} into \eqref{ptauerr} we have
	\begin{equation}
	\begin{split}
		P_\tau(E_\l^\nu) &= \chi_{>|\tau|^{-1}}\gln{\l}{\nu} + \tau \chi_{<|\tau|^{-1}} \zeta_{\l-1}^{\nu}  + \Big(\tau \zeta^{\nu-3}_\kappa + \tau^2 \zeta^{\nu-2}_{\kappa-1} + \tau^\l \zeta_{0}^{\nu}\Big) \enitr \\
		&\qquad - 2i\tau\big[\drfracr R_0\gln{\l}{\nu}\big]\enitr. \label{step1}
	\end{split}
	\end{equation}

	We claim that terms of the right hand side of \eqref{step1} of the form
		\begin{equation} 
			\tau^m \zln{\l-m}{\nu}\enitr, \qquad 1 \le m \le \l-1 \label{itterm1}
		\end{equation}
produce error terms which can be handled inductively. First consider the case $m = \l-1$. By \eqref{itl1} we see that after applying $R_\tau$ to both sides of \eqref{step1}, terms on the right hand side that are of the form \eqref{itterm1} with $m=\l-1$ become
	\begin{equation} 
		\tau^{\l-1}R_\tau(\zln{1}{\nu}\enitr) = \tau^{\l-1} R_\tau(\chi_{<|\tau|^{-1}}\zln{1}{\nu})+\tau^\l(R_\tau h_\nu), \label{iemain1}
	\end{equation}
and we can appeal to case 1 of the proposition to handle the first term, while the second term is expected to appear in $\el$ (In fact this term has more regularity than the $h$ term in the statement of the proposition. We will see the last term on the right hand side of \eqref{step1} limits the amount of regularity we can get for $h_{\nu-3\l}$.)

	Now consider \eqref{itterm1} for $1 \le m \le \l -2$. Substituting \eqref{bterm} and \eqref{cterm} into \eqref{gsimp} gives
	\[
		R_\tau(\gl \enitr) = R_\tau(\chi_{<|\tau|^{-1}}\gl) + \tau R_\tau(\chi_{<|\tau|^{-1}}\zln{\l-1}{\nu}) + \tau^\l(R_\tau h_\nu). 
\]
	 Therefore after applying $R_\tau$ to both sides of \eqref{step1}, we see terms on the right hand side that are of the form \eqref{itterm1} with $1 \le m \le \l-2$ become
	 \begin{equation}
	 	\tau^mR_{\tau}(\zln{\l-m}{\nu}\enitr) = \tau^mR_\tau(\chi_{<|\tau|^{-1}}\zln{\l-m}{\nu}) + \tau^{m+1}R_\tau(\chi_{<|\tau|^{-1}}\zln{\l-m-1}{\nu}) + \tau^\l(R_\tau h_\nu), \label{iemain2}
	 \end{equation}
and we can proceed inductively for the first 2 terms, while the last term is expected to appear in $\el$. 

Next we consider the term $-2i\tau \big[ \drfracr R_0\gln{\l}{\nu}\big]\enitr$ in \eqref{step1} using Proposition \ref{propexp}. We note $\drfracr\rr^{-1} = 0$ so the term $2i\tau\drfracr c_0 \rr^{-1}$ vanishes. It is left to consider the terms
	\begin{enumerate}
		\item[(A)] $-2i\tau\big[ \drfracr c_j\nabla^j \rr^{-1} \big] \enitr$ for $1 \le j \le \l-2$
		\item[(B)] $-2i\tau\big[\drfracr e_j(r) (\nabla^j \rr^{-1} ) \rr^{j-\l+1}\big] \enitr$ for $1 \le j \le \l-2$
		\item[(C)] $-2i\tau \big[ \drfracr e_0 \rr^{-\l} \big] \enitr$ (Here the cases $\l \le \kappa$ and $\l = \kappa+1$ must be considered separately since $e_0 \in \ell^1S(1)$ for $\l \le \kappa$ and $e_0 \in S(1)$ for $\l = \kappa+1$ by Proposition \ref{propexp}.)
		\item[(D)] $-2i\tau \big[ \drfracr d(r)\nabla^{\l-1} \rr^{-1} \big] \enitr$
		\item[(E)] $-2i\tau \big[ \drfracr q(x) \big] \enitr$.
	\end{enumerate}

	For term E we have $q \in Z^{\nu-2,\l-2}$, so writing $\partial_r = r^{-1}S_r$ we see $\drfracr q \in Z^{\nu-3,\l-1}$. Thus we can write
	$
		E = \tau \zln{\l-1}{\nu-3} \enitr.
	$

	For term B we have $e_j \in \ell^1S(1)$ so $\drfracr e_j (\nabla^j \rr^{-1}) \rr^{j-\l+1} \in \ls{-\l-1}$. Since $\ls{-2} \subseteq Z^{N,0}$ for any $N$, we can write
	$
		B =  \tau \zln{\l-1}{\nu-3} \enitr.
	$

    To handle term A we multiply both sides of \eqref{drphj} by the constant $c_j$ to find 
	\[
		A = \sum_{a=1}^j \Big( P_\tau(\tau^a F_a e^{-i\tau\rr}) + (\tau^a \zln{\kappa}{\nu} + \tau^{a+1}\zln{\kappa}{\nu}+ \tau^{a+2}\zln{\kappa-1}{\nu})\enitr \Big)
	\]
where $|F_a| \lesssim \rr^{-1}$. Here we absorbed the constant into the functions $\zln{\kappa}{\nu}$ and $\zln{\kappa-1}{\nu}$.

	Next we consider term D. Using Lemma \ref{lmagradrinv} and the fact that $S_rd \in \ell^1S(1)$, we calculate
	\begin{align*}
		D &= -2i\tau(\partial_r d)(\nabla^{\l-1} \rr^{-1})\enitr - d2i\tau\drfracr(\nabla^{\l-1} \rr^{-1}) \\
			&=\tau \zln{\l-1}{\nu}\enitr + \sum_{a=1}^{\l-1} d P_\tau ((-i\tau)^a F_a e^{-i\tau\rr}) + (\tau^a \zln{\kappa}{\nu} + \tau^{a+1}\zln{\kappa}{\nu}+ \tau^{a+2}\zln{\kappa-1}{\nu})\enitr \\
			&=\tau \zln{\l-1}{\nu}\enitr + \sum_{a=1}^{\l-1} P_\tau( d (-i\tau)^a F_a e^{-i\tau\rr}) - [P_\tau, d] (-i\tau)^a F_a e^{-i\tau\rr}\\
				&\qquad + \Big(\tau^a \zln{\kappa}{\nu} + \tau^{a+1}\zln{\kappa}{\nu}+ \tau^{a+2}\zln{\kappa-1}{\nu}\Big)\enitr.
	\end{align*}
We find by direct calculation
	$
		[P_\tau, d]F_a\enitr = \Big(\rho_\ell^{-\l+a-2} + \tau \rho_\ell^{-\l+a-1}\Big)\enitr. 
	$
so that
    \[
        D = \sum_{a=1}^{\l-1} P_\tau \Big( \tau^a F_a e^{-i\tau\rr}\Big) + (\tau^a (\zln{\kappa}{\nu}+\zln{\l-a}{\nu}) + \tau^{a+1}(\zln{\kappa}{\nu} + \zln{\l-a-1}{\nu})+ \tau^{a+2}\zln{\kappa-1}{\nu})\enitr
    \]
since $\ls{-2} \subset Z^{N,0}$ for any $N$.

	Substituting our expressions for terms $A, B, D,$ and $E$ into \eqref{step1} and simplifying yields
	\begin{equation} \label{step2}
	\begin{split}
		P_\tau(\el) &= \chi_{>|\tau|^{-1}}\gln{\l}{\nu} + \sum_{a=1}^{\l-1}\Big( P_\tau(\tau^a F_a\enitr)\Big) + \tau \chi_{<|\tau|^{-1}} \zeta_{\l-1}^{\nu}  + \Big( \tau \zln{\l-1}{\nu-3} + \tau^2\zln{\l-2}{\nu-2} \Big)\enitr \\
			&\qquad + \sum_{m=3}^{\l} \tau^m \zln{\l-m}{\nu} \enitr + \underbrace{2\tau\drfracr(e_0\rr^{-\l})\enitr}_{(C)}
	\end{split} 
	\end{equation}
for $1 \le \l \le \kappa+1$. 

For term $C$ we consider the cases $2 \le \l \le \kappa$ and $\l = \kappa+1$ separately.

\noindent\textbf{Case 2(a): $2\le \l \le \kappa$}

Here we have $e_0 \in \ell^1S(1)$ so $e_0\rr^{-\l} \in \ls{-\l}$ and we find
	\begin{equation}
		C = 2\tau(\partial_r +\frac{1}{r}) (e_0(r) \rr^{-\lambda})\enitr = \tau \zln{\l-1}{\nu} \enitr. \label{3terma}
	\end{equation}
We can absorb $C$ into the term $\tau \zln{\l-1}{\nu-3}\enitr$ in \eqref{step2}. Then applying $R_\tau$ to both sides of \eqref{step2} and using \eqref{iemain1} and \eqref{iemain2} yields
	\begin{equation}
	\begin{split}
		\el &= R_\tau(\chi_{>|\tau|^{-1}}\gl) + \sum_{a=1}^{\l-1} (\tau^a F_a \enitr) + \tau R_\tau(\chi_{<|\tau|^{-1}}\zln{\l-1}{\nu-3}) + \tau^2R_\tau(\chi_{<|\tau|^{-1}}\zln{\l-2}{\nu-3})\\
			&\qquad + \sum_{m=3}^{\l-1} \tau^mR_\tau(\chi_{<|\tau|^{-1}}\zln{\l-m}{\nu}) + \tau^\l(R_\tau h_{\nu-3}).
	\end{split}
	\end{equation}
Part 1 of the proposition then follows by induction in $\l$ and the established base case for $\l =1$. We note the term $\tau R_\tau(\chi_{<|\tau|^{-1}}\zln{\l-1}{\nu-3})$ leads to the loss of regularity for $h$.

    \noindent\textbf{Case 2(b): $\l = \kappa+1$}

    Since $e_0\rr^{-\kappa-1} \in S_{rad}(r^{-\kappa-1})$, we can use Lemma \ref{lmaenaught}. Combining \eqref{3termb} and \eqref{step2} then applying $R_\tau$ yields
	\begin{equation}
	\begin{split}
		\eln{\kappa+1}{\nu} &= R_\tau(\chi_{>|\tau|^{-1}}\gln{\kappa+1}{\nu}) + \sum_{a=1}^{\kappa} (\tau^a F_a \enitr) + \tau R_\tau(\chi_{<|\tau|^{-1}}\zln{\kappa}{\nu}+\zln{\kappa}{\nu-3}\enitr) \\
			&\quad + \tau^2 R_\tau(\zln{\kappa-1}{\nu-2}\enitr) + \sum_{m=3}^{\kappa} \tau^m R_\tau (\zln{\kappa+1-m}{\nu}\enitr) + \tau^\kappa \epsilon(r,\tau)\enitr + \tau^{\kappa+1}R_\tau h_\nu.
	\end{split}
	\end{equation}
Part 1 of the proposition, \eqref{iemain1}, and \eqref{iemain2} then give
	\begin{equation}
	\begin{split}
		\eln{\kappa+1}{\nu} &= R_\tau(\chi_{>|\tau|^{-1}}\gln{\kappa+1}{\nu}) + \sum_{m=1}^{\kappa}\Big(\tau^m (F_m + R_0\zln{\l-m}{\nu-3m})\enitr \Big) + \tau^\kappa\epsilon(r,\tau)\enitr\\
		&\qquad + \tau^{\kappa+1}(R_\tau h_{\nu-3\kappa-3})
	\end{split}
	\end{equation}
as desired.  This concludes the proof of the proposition.
    \end{proof}

\end{section}

\begin{section}{Pointwise Resolvent Bounds}
In this section we establish the pointwise resolvent bounds that will be used in the proof of the main theorem (see Propositions \ref{proplargetau} and \ref{propsmalltau}). The results of this section do not improve on the results in \cite{tat2013}, but we do track the required regularity more precisely. 

Our argument uses the Sobolev embedding
		\begin{equation} \label{sobemb}
			\| \phi \|_{L^\infty(\mathbb{S}^2)} \lesssim \| \phi \|_{L^2(\mathbb{S}^2)} + \| \Omega^2 \phi \|_{L^2(\mathbb{S}^2)} 
		\end{equation}
to obtain useful $L_r^2L_\omega^\infty(A_m)$ bounds on $g$ and $R_\tau g$. For reference we begin with two preliminary lemmas resulting from a straightforward application of \eqref{sobemb}.

\begin{lemma} \label{lotosupo}
	If $\phi$ satisfies 
	    \[ \| \la r \ra^p \phi \|_{L^2(A_m)} + \sum_{|\alpha| = 2} \| \la r \ra^p \Omega^\alpha \phi \|\ltwoam \lesssim 1 \] 
	then $ 2^{m(1+p)}\| \phi \|\lrsupo \lesssim 1 $.
\end{lemma}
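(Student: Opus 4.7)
The plan is to prove this directly from the Sobolev embedding \eqref{sobemb} applied pointwise in $r$, with a careful accounting of the factor $r^2$ from the spherical volume element. Since $r \approx 2^m$ throughout $A_m$, this factor contributes the crucial $2^{-m}$ gain separating the hypothesis from the conclusion.

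First I would fix $r$ and view $\phi(r,\cdot)$ as a function on $\mathbb{S}^2$. Applying \eqref{sobemb} and squaring gives
\[
\|\phi(r,\cdot)\|_{L^\infty(\mathbb{S}^2)}^2 \lesssim \|\phi(r,\cdot)\|_{L^2(\mathbb{S}^2)}^2 + \sum_{|\alpha|=2}\|\Omega^\alpha \phi(r,\cdot)\|_{L^2(\mathbb{S}^2)}^2.
\]
Next I would integrate in $r$ over $A_m$ against the one-dimensional measure $dr$. The left-hand side is exactly $\|\phi\|_{L_r^2L_\omega^\infty(A_m)}^2$, while on the right I want to recover the full $L^2(A_m)$ norm, whose integration measure is $r^2\,dr\,d\omega$. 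Inserting the factor $r^2 r^{-2}$ and using $r^{-2} \approx 2^{-2m}$ on $A_m$ yields
\[
\int_{A_m} \|\phi(r,\cdot)\|_{L^2(\mathbb{S}^2)}^2 \,dr \lesssim 2^{-2m}\|\phi\|_{L^2(A_m)}^2,
\]
and analogously for each $\Omega^\alpha \phi$. Combining these bounds gives
\[
\|\phi\|_{L_r^2L_\omega^\infty(A_m)}^2 \lesssim 2^{-2m}\Big(\|\phi\|_{L^2(A_m)}^2 + \sum_{|\alpha|=2}\|\Omega^\alpha \phi\|_{L^2(A_m)}^2 \Big).
\]

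Finally, using $\la r \ra \approx 2^m$ on $A_m$, the hypothesis $\|\la r \ra^p \phi\|_{L^2(A_m)} + \sum_{|\alpha|=2}\|\la r \ra^p \Omega^\alpha \phi\|_{L^2(A_m)} \lesssim 1$ translates to $\|\phi\|_{L^2(A_m)} + \sum_{|\alpha|=2}\|\Omega^\alpha \phi\|_{L^2(A_m)} \lesssim 2^{-mp}$, whence
\[
\|\phi\|_{L_r^2L_\omega^\infty(A_m)} \lesssim 2^{-m}\cdot 2^{-mp} = 2^{-m(1+p)},
\]
which is the desired conclusion. There is no real obstacle here; the only point requiring attention is the bookkeeping of the $r^2$ volume factor when passing between the mixed norm $L_r^2L_\omega^\infty(A_m)$ (with respect to $dr\,d\omega$) and the standard $L^2(A_m)$ norm, and this is precisely what produces the extra $2^{-m}$.
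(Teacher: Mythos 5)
Your argument is correct and follows the same route as the paper: apply the spherical Sobolev embedding \eqref{sobemb} slice-by-slice in $r$, then account for the $r^2\,dr\,d\omega$ volume element (equivalently, the change of coordinates giving $\|\la r\ra^p\phi\|_{L^2(A_m)}^2\approx 2^{2m(1+p)}\|\phi\|_{L_r^2L_\omega^2(A_m)}^2$) to produce the factor $2^{m(1+p)}$. The only difference is that you spell out the bookkeeping that the paper compresses into a single line.
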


\begin{proof}
	A change of coordinates yields $\|\la r \ra^p \phi \|_{L^2(A_m)}^2 \approx 2^{2m(1+p)} \| \phi \|\lrlo^2$. Then from the Sobolev embedding \eqref{sobemb} we obtain
	\begin{equation} \label{techfix}
	\begin{split}
	 	2^{m(1+p)} \| \phi \|\lrsupo \lesssim \| \la r \ra^p \phi \|_{L^2(A_m)} +  \| \la r \ra^p \Omega^2 \phi \|_{L^2(A_m)} \lesssim 1. 
	 \end{split}
	 \end{equation}
\end{proof}

In the following proofs we use the notation $Q_\ell$ to denote any operator of the form
		\begin{equation} 
			\tau (\partial_i h^i + h^i \partial_i) + \partial_i h^{ij}\partial_j + h_\ell, \qquad h^{i}, h^{ij} \in \ell^1S(r^{-\kappa}), \quad h_\ell \in \ell^1S(r^{-\kappa-2})  \label{Qellform}
		\end{equation}
and use $Q_r$ to denote an operator of the form
		\begin{equation} 
			h^\omega \Delta_\omega + h_r, \quad h^\omega, h_r \in S_{rad}(r^{-\kappa-2}). \label{Qrform}
		\end{equation}

\begin{lemma} \label{znqlessr2}
	If $\phi,$  $S_r \phi,$  $\Omega^2\phi$, $\Omega^2 S_r \phi \in \mathcal{LE}^*$, then
		$ |\phi| \lesssim \la r \ra^{-2}$. Furthermore, if $\phi \in Z^{n,q}$, then $ |\partial_r^p \phi | \lesssim \la r \ra^{-2-p-q}$ for $p \le n-3$.
\end{lemma}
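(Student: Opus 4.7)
The plan is to combine Lemma~\ref{lotosupo} with a one-dimensional Sobolev embedding in the radial variable to upgrade $\mathcal{LE}^*$-type integrability to a pointwise bound. For the first assertion, note that the $\mathcal{LE}^*$-norm is the $\ell^1_m$-sum of $\|\la r \ra^{\frac{1}{2}}\cdot\|_{L^2(A_m)}$, so the hypotheses yield
\[
\|\la r \ra^{\frac{1}{2}}\phi\|_{L^2(A_m)} + \|\la r \ra^{\frac{1}{2}}\Omega^2\phi\|_{L^2(A_m)} \lesssim 1,
\]
uniformly in $m$, and the same with $\phi$ replaced by $S_r\phi$. Applying Lemma~\ref{lotosupo} with $p=\tfrac{1}{2}$ to both $\phi$ and $S_r\phi$ then gives
\[
\|\phi\|_{L^2_rL^\infty_\omega(A_m)} + \|S_r\phi\|_{L^2_rL^\infty_\omega(A_m)} \lesssim 2^{-3m/2}.
\]

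To convert these $L^2_rL^\infty_\omega$ bounds into a genuine $L^\infty$ estimate, I would apply the standard one-dimensional Sobolev embedding $\|f\|_{L^\infty(I)}^2 \lesssim L^{-1}\|f\|_{L^2(I)}^2 + L\|f'\|_{L^2(I)}^2$ on the interval $A_m$ (of length $L\approx 2^m$) to $f(r)=\phi(r,\omega)$ for each fixed $\omega$. Writing $\partial_r\phi = r^{-1}S_r\phi$ (so $\|\partial_r\phi(\cdot,\omega)\|_{L^2(A_m)}^2 \lesssim 2^{-2m}\|S_r\phi(\cdot,\omega)\|_{L^2(A_m)}^2$) and then taking $\sup_\omega$, using the elementary observation that $\sup_\omega\|\psi(\cdot,\omega)\|_{L^2(A_m)} \le \|\psi\|_{L^2_rL^\infty_\omega(A_m)}$, yields
\[
\sup_{A_m\times \S^2}|\phi|^2 \lesssim 2^{-m}\|\phi\|_{L^2_rL^\infty_\omega(A_m)}^2 + 2^{-m}\|S_r\phi\|_{L^2_rL^\infty_\omega(A_m)}^2 \lesssim 2^{-4m},
\]
and hence $|\phi|\lesssim 2^{-2m}\approx\la r \ra^{-2}$ on each $A_m$.

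For the second assertion I would apply the first part to the weighted functions $\tilde\phi_j := \la r \ra^q S_r^j\phi$ for each $j \le n-3$. The Leibniz rule together with the elementary bound $|S_r\la r \ra^q|\lesssim \la r \ra^q$ shows that $\tilde\phi_j$, $\Omega^2\tilde\phi_j$, $S_r\tilde\phi_j$, and $\Omega^2 S_r\tilde\phi_j$ are each finite linear combinations of expressions $\la r \ra^q\Omega^{j'}S_r^{k'}\phi$ with $j'+k'\le j+3 \le n$; all such expressions lie in $\mathcal{LE}^*$ with norm controlled by $\|\phi\|_{Z^{n,q}}$. The base case applied to $\tilde\phi_j$ then produces $|S_r^j\phi|\lesssim \la r \ra^{-2-q}$ for each $j\le n-3$. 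Finally, the classical falling-factorial identity
\[
r^p\partial_r^p = S_r(S_r-1)\cdots(S_r-p+1) = \sum_{j=1}^p a_{p,j}S_r^j
\]
yields $|\partial_r^p\phi|\lesssim r^{-p}\sum_{j\le p}|S_r^j\phi|\lesssim \la r \ra^{-2-p-q}$ for $r\gtrsim 1$, while the bound is automatic for $r\lesssim 1$ where $\la r \ra^{-2-p-q}\approx 1$.

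The argument is technically routine; the main (mild) obstacle is the bookkeeping step of verifying that the three extra derivatives needed to invoke the base case — namely the two $\Omega$'s and the one additional $S_r$ applied to $\la r \ra^q S_r^j\phi$ — match exactly the loss $p\le n-3$ in the statement, so that all relevant quantities can be controlled by $\|\phi\|_{Z^{n,q}}$.
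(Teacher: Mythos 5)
Your proof is correct and follows essentially the same route as the paper: Lemma~\ref{lotosupo} plus a one-dimensional Sobolev embedding in $r$ gives the $\la r \ra^{-2}$ bound, and the second assertion is obtained by applying this base case to weighted radial derivatives of $\phi$ and converting between $\partial_r^p$ and $S_r^j$ via the falling-factorial identity, with the same $p\le n-3$ bookkeeping. The only cosmetic difference is the order of conversion (the paper verifies the hypotheses directly for $\rr^{q+p}\partial_r^p\phi$, while you first bound $|S_r^j\phi|$), and a minor imprecision in calling the commutator terms "linear combinations" when the coefficients are $r$-dependent bounded smooth functions rather than constants — harmless, since bounded multipliers preserve $\mathcal{LE}^*$.
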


\begin{proof}
	Fix $m$. By assumption, $\la r \ra^{\frac{1}{2}} \phi, 	\la r \ra^{\frac{1}{2}} \Omega^2 \phi, \la r \ra^{\frac{3}{2}} \partial_r \phi, \la r \ra^{\frac{3}{2}} \Omega^2 \partial_r \phi \in L^2(A_m), $
	so by \eqref{lotosupo}, we have $ 2^{\frac{3m}{2}} \| \phi \|\lrsupo \lesssim 1$ and $	2^{\frac{5m}{2}} \| \partial_r \phi\|\lrsupo \lesssim 1$. Using the Fundamental Theorem of Calculus and Cauchy-Schwarz, we find pointwise bounds on $\phi$:
	\begin{align*}
		\| \phi\|_{L^\infty({A_m})} &\lesssim 2^{-\frac{m}{2}} \|\phi\|\lrsupo + 2^{\frac{m}{2}} \| \partial_r \phi \|\lrsupo \lesssim 2^{-2m}.
	\end{align*}
Thus $| \phi| \lesssim \la r \ra^{-2}$ since $m$ was arbitrary.

	Now take $\phi \in Z^{n,q}$. We can write $\rr^p \partial_r^p S_r^p$ as a linear combination of $S_r^k$ for $k \le p$ so $ |\rr^{q+p} \partial_r^p \phi| \lesssim \sum_{k \le p} | \rr^q S_r^k \phi|$. Since $[\Omega, r] = 0$ and $[S_r, r] =r$, it follows by the definition of $Z^{n,q}$ that $\Omega^j S_r^k \rr^{q+p} \partial_r^p \phi \in \mathcal{LE}^*$ when $j + k \le 3 $
if $p \le n-3$. Therefore by the first part of the proposition we have $|\rr^{q+p} \partial_r^p \phi| \lesssim \rr^{-2}$.
\end{proof}

The following calculation will be useful for the remaining lemmas and propositions in this section. Writing
    \[ (\partial_r^2 +\tau^2) =  P_\tau - ( 2r^{-1}\partial_r + r^{-2}\Delta_\omega + Q_\ell + Q_r), \] 
where $Q_\ell, Q_r$ are as in \eqref{Qellform} and \eqref{Qrform}, we obtain
	\begin{equation} \label{precalcs}
	\begin{split}
	    	(\partial_r^2 +\tau^2)rv_{ijk} &= r(\partial_r^2 +\tau^2)v_{ijk} + 2\partial_r v_{ijk} \\
		&= rP_\tau v_{ijk} -r^{-1}\Delta_\omega v_{ijk} -r(Q_\ell +Q_r) v_{ijk}.
	\end{split}
	\end{equation}
Commuting $P_\tau$ with $T^i\Omega^jS^k$ yields 
	\begin{equation} \label{pveccomm} 
			P_\tau v_{ijk} = g_{ij \le k} + Q_\ell(v_{<ijk} + v_{\le i < j k} + v_{\le i \le j < k} ) + Q_r ( v_{<ijk} + v_{\le i j < k} ).
	\end{equation}
Then we rewrite the first term of \eqref{precalcs} using \eqref{pveccomm} to find
    \begin{equation} \label{drtaubnd}
				(\partial_r^2 + \tau^2)rv_{ijk} = -r^{-1}\Delta_\omega v_{ijk} + r(Q_\ell+ Q_r) v_{\le i \le j \le k} + rg_{\le i \le j \le k}.
			\end{equation}

In Proposition \ref{firstptbnd} we state the same pointwise bounds and outgoing radiation condition established in \cite[Proposition 16]{tat2013}. However, we obtain different numerology for the number of vector fields that can be applied to $v = R_\tau g$ so that the results hold. Thus we offer a concrete justification for the change in the vector field numerology but provide only a brief outline of the argument. We will use notation as in Proposition \ref{resbnd2} so that $M$ indicates the regularity assumed for $g$. We take $v_{ijk} = T^i\Omega^jS^k v$ and $g_{ijk} = T^i\Omega^jS^k g$. Similarly, we write $v_{<i<j<k} = T^{<i}\Omega^{<i}S^{<k}v$ and use analogous notation for $g$.

\begin{proposition} \label{firstptbnd}
	Assume $\Im \tau \le 0$. Let $g \in \mathcal{LE}^*$ satisfy~\eqref{gijkbnd} and possibly depend on $\tau$. Set $v = R_\tau g$.
	\begin{enumerate}
		\item[(i)] If $|\tau| \gtrsim 1$, then
			\begin{equation}
				|T^i\Omega^jS^k v(\tau)| \lesssim (|\tau|\la r \ra)^{-1}, \quad i+4j+16k \le M-20. \label{largebnd1}
			\end{equation}
		
		\item[(ii)] If $|\tau| \lesssim 1$, then
			\begin{equation}
				|T^i\Omega^jS^k v(\tau)| \lesssim 
					\begin{cases} \min \{ 1, (|\tau|\la r \ra)^{-1} \} &\quad i=0 \\ 
						\la r \ra^{-1} &\quad i \ge 1  \end{cases}
				  \qquad i+4j+16k \le M-20. \label{smallbnd1}
			\end{equation}
		
		\item[(iii)] If $\tau \in \R \setminus \{ 0 \}$, then we have the outgoing radiation condition:
			\begin{equation} \label{randcond2}
				\lim_{|x| \to \infty} r(\partial_r+i\tau)T^i\Omega^jS^k v(\tau) = 0, \quad i+4j+16k \le M-20.
			\end{equation}
	\end{enumerate}
\end{proposition}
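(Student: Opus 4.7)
The plan is to convert the weighted $L^2$ estimates of Proposition~\ref{resbnd2} into pointwise bounds by reducing to a one-parameter family of 1D Helmholtz equations along radial rays and integrating against the outgoing fundamental solution of $\partial_r^2+\tau^2$. The central identity is \eqref{drtaubnd}, which reads
\[
(\partial_r^2+\tau^2)(rv_{ijk}) \;=\; -r^{-1}\Delta_\omega v_{ijk} + r(Q_\ell+Q_r)v_{\le i\le j\le k} + rg_{\le i\le j\le k}.
\]
For $\Im\tau\le 0$, the outgoing exponential $e^{-i\tau r}$ is bounded in $r$, so pointwise control of $v_{ijk}$ follows from $L^1_s$-type estimates on the right-hand side after integration against the appropriate Green's kernel.

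The first step is to extract $L_r^2 L_\omega^\infty$ bounds from the $\mathcal{LE}_\tau$ bound $\|v_{ijk}\|_{\mathcal{LE}_\tau}\lesssim 1$ for $i+4j+16k<M-4$, supplied by Proposition~\ref{resbnd2}. Applying the angular Sobolev embedding \eqref{sobemb} through Lemma~\ref{lotosupo} converts these into estimates of the form
\[
2^{m/2}\big\|(|\tau|+\la r\ra^{-1})v_{ijk}\big\|_{L_r^2L_\omega^\infty(A_m)} + \big\|\nabla v_{ijk}\big\|_{L_r^2L_\omega^\infty(A_m)} + \big\|(|\tau|+\la r\ra^{-1})^{-1}\nabla^2 v_{ijk}\big\|_{L_r^2L_\omega^\infty(A_m)} \lesssim 1,
\]
at the price of two additional rotation vector fields. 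The hypothesis~\eqref{gijkbnd} yields analogous bounds on $g_{ijk}$.

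Second, I would factor $(\partial_r^2+\tau^2)=(\partial_r-i\tau)(\partial_r+i\tau)$ and integrate the resulting first-order system inward from infinity subject to the outgoing condition. For $\Im\tau\le 0$ this gives a representation of the form
\[
rv_{ijk}(r,\omega) = \frac{1}{2i\tau}\int_r^\infty\bigl(e^{i\tau(s-r)}-e^{-i\tau(s-r)}\bigr)F(s,\omega)\,ds + (\text{contribution from }r\lesssim 1),
\]
with the local contribution handled via weak local energy decay and Sobolev embedding. On each $A_m$, plugging in the three pieces of $F$ from \eqref{drtaubnd} and using Cauchy--Schwarz in $s$ together with the $L_r^2L_\omega^\infty$ bounds above gives part~(ii). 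The improved $(|\tau|\la r\ra)^{-1}$ decay of part~(i) (and the $i=0$ case of (ii)) is obtained by one integration by parts in $s$, using $e^{\pm i\tau(s-r)}=\pm(i\tau)^{-1}\partial_s e^{\pm i\tau(s-r)}$, which trades a factor of $|\tau|^{-1}$ for one additional derivative landing on $F$. The outgoing radiation condition \eqref{randcond2} falls out directly from the representation formula, since $(\partial_r+i\tau)(rv_{ijk})$ equals the first integral, which tends to zero as $r\to\infty$.

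The main obstacle is the bookkeeping. Each application of $S$ interacts badly with the commutators $[\Delta,S]=2\Delta$ and $[P^2,S]=Q_\ell+Q_r$ through \eqref{pveccomm}, while the integration-by-parts trick for part~(i) and the source term $r(Q_\ell+Q_r)v_{\le i\le j\le k}$ each force us to bound $v_{\le i\le j \le k}$ and several of its derivatives in $L_r^2 L_\omega^\infty$. Chasing these losses through the chain $\mathcal{LE}_\tau\to L_r^2L_\omega^\infty\to L^\infty$ produces the exponents $1$, $4$, and $16$ in the condition $i+4j+16k\le M-20$. Since the analytic substance already appears in \cite[Proposition 16]{tat2013}, the novelty here is to verify that the precise arithmetic of the derivative losses yields the stated numerology.
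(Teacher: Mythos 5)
Your scaffolding matches the paper's: both reduce to the radial Helmholtz identity \eqref{drtaubnd}, convert the $\mathcal{LE}_\tau$ bounds of Proposition~\ref{resbnd2} into $L_r^2L_\omega^\infty$ bounds via Lemma~\ref{lotosupo} (each invocation costing two $\Omega$'s), integrate against the one-dimensional Green's kernel of $\partial_r^2+\tau^2$ from infinity using the outgoing condition, and read off the vector-field numerology $i+4j+16k\le M-20$ from the resulting $\Omega^2$ iterations. The radiation condition falling out of the representation is also exactly what the paper notes.

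Two points deserve correction, however. First, the stated ``integration by parts trades $|\tau|^{-1}$ for a derivative'' is backwards as a route to the $(|\tau|\la r\ra)^{-1}$ bound: the factor $\tau^{-1}$ is already present in the fundamental solution $\tau^{-1}e^{-i\tau|s|}$, so Cauchy--Schwarz applied to your representation already gives $|rv_{ijk}|\lesssim |\tau|^{-1}$ with no further integration by parts. An $\partial_s$-integration by parts in that representation would produce an additional $\tau^{-1}$, not remove one. Second, and more substantively, the Green's-function argument alone gives $|v_{ijk}|\lesssim (\la r\ra|\tau|)^{-1}$, which is \emph{unbounded} in the regime $|\tau|\lesssim 1$, $\la r\ra\lesssim|\tau|^{-1}$, and therefore cannot produce the bound $1$ (for $i=0$) or $\la r\ra^{-1}$ (for $i\ge 1$) required in part~(ii). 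Your proposal treats part~(ii) as following uniformly from Cauchy--Schwarz, but this regime genuinely requires a separate argument. The paper handles it by observing that the weight $(\la r\ra^{-1}+|\tau|)^{-1}$ on the $\nabla^2$ term in $\|\cdot\|_{\mathcal{LE}_\tau}$ is comparable to $\la r\ra$ when $\la r\ra\lesssim|\tau|^{-1}$, so the full collection $\|\la r\ra^{-1}v_{ijk}\|_{\mathcal{LE}}$, $\|\nabla v_{ijk}\|_{\mathcal{LE}}$, $\|\la r\ra\nabla^2 v_{ijk}\|_{\mathcal{LE}}$ together with angular Sobolev embedding gives the sharp pointwise bounds directly on dyadic shells, without passing through the Green's kernel at all. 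Without this you cannot close part~(ii) on the inner region.
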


\begin{proof}[proof summary]
    \textbf{(i) and (ii)}
    The estimate 
        \begin{equation} \label{keyest}
		 		\sum_m 2^{\frac{m}{2}}\|(\partial_r^2 + \tau^2)( r v_{ijk})\|\lrsupo \lesssim 1 
			\end{equation}
	implies
		\begin{equation} \label{aresult61}
				|v_{ijk}| \lesssim (\la r \ra |\tau|)^{-1} \qquad \hbox{and} \qquad |\partial_r v_{ijk}| \lesssim \la r \ra^{-1}
			\end{equation}
	using the fundamental solution for $(\partial_r^2+\tau^2)$, which is given by $\tau^{-1}e^{-i\tau|s|}$. 
	
	The pointwise bounds for $|\tau| \gtrsim 1$ and for the case $\rr \gtrsim |\tau|^{-1}$ when $|\tau| \lesssim 1$ are obtained using \eqref{keyest}. To show \eqref{keyest} holds, we bound each term on the right hand side of \eqref{drtaubnd} by applying Lemma \ref{lotosupo} to the assumption $\|g_{ijk}\|\les \lesssim 1$ for $i+4j+16k \le M$ and the resulting fact $\|v_{ijk}\|\letn{} \lesssim 1$ for $i+4j+16 \le M-4$ (which holds by Proposition \ref{resbnd2}).
	
	To see where the vector field loss occurs, consider the term $r^{-1}\Delta_\omega v_{ijk}$ on the right hand side of \eqref{drtaubnd}. We wish to show this term satisfies \eqref{keyest}. Using \eqref{techfix} in Lemma \ref{lotosupo}, it suffices to show
	\begin{equation} \label{goal611} 
		\sum_m  \| \la r \ra^{-\frac{3}{2}} \Delta_\omega v_{ijk} \|\ltwoam + \| \la r \ra^{-\frac{3}{2}}  \Omega^2(\Delta_\omega v_{ijk}) \|\ltwoam \lesssim 1 .
	\end{equation}
	When $|\tau| \gtrsim 1$, we have 
		\begin{equation} \label{largeletau}
			\| \la r \ra^{-\frac{1}{2}} v_{ijk} \|_{L^2(A_m)} \lesssim |\tau| \| \la r \ra^{-\frac{1}{2}} v_{ijk} \|\ltwoam \lesssim \|v_{ijk}\|\letn{} \lesssim 1 \qquad i + 4j + 16k < M-4. 
		\end{equation}
 	Replacing $\Delta_\omega$ by $\sum_{|\alpha| = 2} \Omega^\alpha$ and using \eqref{largeletau} yields
 	\begin{align*} 
 		\sum_m  \| &\la r \ra^{-\frac{3}{2}} \Delta_\omega v_{ijk} \|\ltwoam + \| \la r \ra^{-\frac{3}{2}}  \Omega^2(\Delta_\omega v_{ijk}) \|\ltwoam\\
 		    &= \sum_m \| \la r \ra^{-\frac{3}{2}} v_{i(j+2)k} \|\ltwoam + \| \la r \ra^{-\frac{3}{2}} v_{i(j+4)k} \|\ltwoam \\
 			&\lesssim 1, \qquad i + 4j + 16k \le M - 20.
 	\end{align*}
 	This shows the $M-20$ vector field loss (as compared to the erroneously stated $M-12$ loss in \cite{tat2013}).
	
	When $|\tau| \lesssim 1$ and $|\tau|^{-1} \gtrsim \rr$, then \eqref{aresult61} is insufficient since $(|\tau|\la r \ra)^{-1}$ is unbounded. The advantage in this case is that the $(\la r \ra^{-1} + |\tau|)^{-1}$ weight in the second order term of $\|v_{ijk}\|\letn{}$ is bounded below by $\la r \ra$ when $\la r \ra \lesssim |\tau|^{-1}$.
\end{proof}

We will use Proposition \ref{firstptbnd} to establish pointwise bounds on $(\tau \partial_\tau)^p (ve^{ir\tau})$. Note $ \tau \partial_\tau (ve^{ir\tau}) = [(-S + r(\partial_r + i\tau))v] e^{ir\tau}.$ This motivates the following lemma, which will be used to prove the subsequent proposition stating the pointwise bounds on $(\tau \partial_\tau)^p (ve^{ir\tau})$ for $|\tau| \gtrsim 1$. We remark that while the above calculation shows we are primarily concerned with $(\partial_r + i\tau)^pv_{00k}$, our methods will generate $T$ and $\Omega$ vector fields as we induct in $k$, so we handle $(\partial_r + i\tau)^p v_{ijk}$.

\begin{lemma} 
	Let $g \in Z^{n,q}$. If $\tau \in \R$ and $|\tau| \gtrsim 1$, then $v = R_\tau g$ satisfies the pointwise bounds
	\begin{equation} \label{dritaulargebnd} 
		|(\partial_r +i\tau)^{p} v_{ijk}| \lesssim |\tau|^{p-1} \la r \ra^{-p - 1}, \qquad p \le q, \quad p\le n-3, \quad \hbox{and} \quad i + 4j + 16k \le n - 20 -8p.  
	\end{equation}
\end{lemma}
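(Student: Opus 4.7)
The plan is to induct on $p$, with Proposition~\ref{firstptbnd}(i) supplying the base case $p=0$: there $|v_{ijk}|\lesssim(|\tau|\rr)^{-1}=|\tau|^{-1}\rr^{-1}$, and the stated vector field condition $i+4j+16k\le n-20$ matches \eqref{largebnd1} once one observes that $g\in Z^{n,q}$ gives $\|\vf g\|_{\LEs}\lesssim 1$ for $i+4j+16k\le n$, so the parameter $M$ in Propositions~\ref{resbnd2} and \ref{firstptbnd} is comparable to $n$.

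For the inductive step, I would factor the Helmholtz operator as $\partial_r^2+\tau^2=(\partial_r-i\tau)(\partial_r+i\tau)$ and invoke the commuted identity \eqref{drtaubnd}:
\[ (\partial_r-i\tau)(\partial_r+i\tau)(rv_{ijk})=-r^{-1}\Delta_\omega v_{ijk}+r(Q_\ell+Q_r)v_{\le i\le j\le k}+rg_{\le i\le j\le k}=:F_{ijk}. \]
Since $\partial_r\pm i\tau$ commute when $\tau$ is constant, applying $(\partial_r+i\tau)^{p-1}$ and multiplying by $e^{-i\tau r}$ exhibits the left side as the $r$-derivative of $e^{-i\tau r}(\partial_r+i\tau)^p(rv_{ijk})$. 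Combining the outgoing radiation condition \eqref{randcond2} with the Leibniz identity $(\partial_r+i\tau)^p(rv_{ijk})=r(\partial_r+i\tau)^pv_{ijk}+p(\partial_r+i\tau)^{p-1}v_{ijk}$ and the inductive hypothesis at order $p-1$ makes the boundary contribution at infinity vanish, so integrating from $r$ to $\infty$ gives
\[ (\partial_r+i\tau)^p(rv_{ijk})(r)=-\int_r^\infty e^{i\tau(r-s)}\,(\partial_r+i\tau)^{p-1}F_{ijk}(s)\,ds. \]
Since $\tau\in\R$ the kernel is unimodular, and the problem reduces to a pointwise absolute bound on $(\partial_r+i\tau)^{p-1}F_{ijk}$.

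I would then estimate each piece of $(\partial_r+i\tau)^{p-1}F_{ijk}$ by expanding via $[\partial_r+i\tau,f(r)]=f'(r)$ into sums $r^{-a}(\partial_r+i\tau)^{\le p-1}$ applied to the summands of $F_{ijk}$, using the inductive hypothesis and the $p=0$ bounds from Proposition~\ref{firstptbnd}. The angular piece $r^{-1}\Delta_\omega v_{ijk}=r^{-1}v_{i(j+2)k}$ is the one that costs exactly $4\cdot2=8$ in the weighted count, explaining the $-8p$ in the constraint $i+4j+16k\le n-20-8p$. For the $Q_\ell,Q_r$ pieces I would rewrite the second spatial derivatives as combinations of $(\partial_r+i\tau)^{\le 2}$ and $\Omega^{\le 2}/r$ acting on $v_{\le i\le j\le k}$; the decay $\rr^{-\kappa}$ with $\kappa\ge 2$ in the coefficients leaves room to spare after integration in $s$. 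For the source $rg_{\le i\le j\le k}$, the pointwise bounds $|\partial_r^a g_{\le i\le j\le k}|\lesssim\rr^{-2-q-a}$ obtained from $g\in Z^{n,q}$ via Lemma~\ref{znqlessr2} force the restrictions $p\le q$ and $p\le n-3$. Assembling these yields $|(\partial_r+i\tau)^{p-1}F_{ijk}(s)|\lesssim|\tau|^{p-1}s^{-p-1}$, hence $|(\partial_r+i\tau)^p(rv_{ijk})|\lesssim|\tau|^{p-1}\rr^{-p}$, and the Leibniz identity together with the inductive hypothesis applied to the remainder $p(\partial_r+i\tau)^{p-1}v_{ijk}$ produces the desired~\eqref{dritaulargebnd}.

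The main obstacle will be bookkeeping rather than a new analytic idea: one has to track how each commutation between $(\partial_r+i\tau)$, the weights $r^{-a}$, and the symbol classes of the metric coefficients propagates through the induction so that the precise $|\tau|^{p-1}\rr^{-p-1}$ rate and the sharp count $i+4j+16k\le n-20-8p$ both drop out. The assumption $|\tau|\gtrsim 1$ is invoked only at the end, to absorb subleading powers of $|\tau|$ generated by lower-order commutator contributions.
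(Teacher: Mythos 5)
Your proposal is correct and follows essentially the same route as the paper: base case from Proposition~\ref{firstptbnd}(i), factoring $\partial_r^2+\tau^2=(\partial_r-i\tau)(\partial_r+i\tau)$, applying $(\partial_r+i\tau)^{p-1}$ to \eqref{drtaubnd}, integrating from infinity via the radiation condition \eqref{randcond2}, and closing the induction by commutator bookkeeping together with the $g$-bounds from Lemma~\ref{znqlessr2}. The only cosmetic difference is that the paper works out the $p=1$ step separately before the general induction and does not spell out the Leibniz identity $(\partial_r+i\tau)^p(rv)=r(\partial_r+i\tau)^p v+p(\partial_r+i\tau)^{p-1}v$ as explicitly.
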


\begin{proof}
	Note $i+j+k < i + 4j + 16k <n$ so $g \in Z^{n,q}$ implies $g$ satisfies \eqref{gijkbnd} with $M =n$, and the results of Proposition \ref{firstptbnd} apply with $M=n$.
	
	When $p = 0$, \eqref{dritaulargebnd} follows from \eqref{largebnd1}.
	
	Now let $p=1$. All but the last term on the right hand side of \eqref{drtaubnd} are pointwise bounded by $\la r \ra^{-2}$ using \eqref{largebnd1} and the fact that $\kappa \ge 2$. We replace $\Delta_\omega v_{ijk}$ by $v_{i(j+2)k}$ so the bounds hold when $i + 4j + 16k \le n - 28$. For the final term, by Lemma \ref{znqlessr2} we have $ |\partial_r^k g| \lesssim \la r \ra^{-2-k-q}$ when $k \le n-3$, so $|rg| \lesssim \la r \ra^{-2}$ since $1=p\le q$. Thus we have $ |(\partial_r^2 + \tau^2)(rv_{ijk})| \lesssim \rr^{-2}$.
	
	We rewrite $\partial_r^2+\tau^2 = (\partial_r-i\tau)(\partial_r+i\tau)$ and use an integrating factor to write
	    \[ \partial_r\big[\big((\partial_r+i\tau)rv_{ijk}\big)e^{-i\tau r}\big]  =\big((\partial_r^2 +\tau^2)rv_{ijk}\big) e^{-i\tau r} \]
By \eqref{largebnd1} and \eqref{randcond2}, we have $\lim_{r \to \infty} (\partial_r + i\tau)(rv_{ijk}) = 0$, so we can integrate from infinity to find
	\[ |(\partial_r + i\tau)(rv_{ijk})| \lesssim \int_{r}^{\infty} | \la s \ra^{-2}| ds = \la r \ra^{-1}. \]
	It follows that $|(\partial_r + i\tau)v_{ijk}| \lesssim  \la r \ra^{-2}$, as desired.
	
	We proceed by induction. Fix $p$ and assume $|(\partial_r+i\tau)^a\vijk|\lesssim |\tau|^{a-1}\rr^{-a-1} $ for $a <p$  when $i + 4j + 16 \le n - 20 -8a$. 
Applying $(\partial_r+i\tau)^{p-1}$ to \eqref{drtaubnd} we find
	\begin{equation} \label{gencase}
	\begin{split} 
	(\partial_r &- i\tau)(\partial_r + i\tau)^p(rv_{ijk})\\
		&= \sum_{m=0}^{p-1}\left((-1)^{p-m+1}c_m r^{-(p-m)} (\partial_r + i\tau)^m \Delta_\omega  \vijk\right) + r(\partial_r + i\tau)^{p-1} (Q_\ell+ Q_r) \vleijk  \\
			&\quad + C(\partial_r + i\tau)^{p-2} (Q_\ell+Q_r) \vleijk + \big(r(\partial_r + i\tau)^{p-1} + C(\partial_r + i\tau)^{p-2}\big) g_{\le i\le j\le k}.  
	\end{split}	
	\end{equation}
Each term on the right hand side of \eqref{gencase} is bounded in magnitude by $|\tau|^{p-1} \la r \ra^{-p-1}$. The first term is bounded, by the inductive hypothesis, when $i + 4j + 16k \le n - 20 -8p$. For the $Q_\ell$ and $Q_r$ terms, we commute $(\partial_r +i\tau)$ with the coefficients of the operators and view the derivatives as vector fields. The bounds then follow using our assumption $\kappa \ge 2$ once we note
	\begin{equation} \label{drtqcomm}
	\begin{split} 
		[(\partial_r+i\tau)^{p-1}, \rho_{\bullet}^{-\kappa}] &= \sum_{m=1}^{p-1} c_m \rho_\bullet^{-\kappa-m}(\partial_r+i\tau)^{p-1-m}
	\end{split}
	\end{equation}
for $\bullet \in \{ \ell, r\}$. For the last $g_{ijk}$ terms we use Lemma \ref{znqlessr2} (which requires our assumption $p \le n-3$) to find
	\begin{align*}
		|r(\partial_r+i\tau)^{p-1}&g_{\le i \le j \le k}| + |(\partial_r+i\tau)^{p-2}g_{\le i \le j \le k}|\\ 
			&= \big|r \sum_{m = 0}^{p-1} c_m (i\tau)^{m}\partial_r^{p-1-m}g_{\le i \le j \le k}\big| + \big|\sum_{m = 0}^{p-2}(i\tau)^{m}c_m \partial_r^{p-2-m}g_{\le i \le j \le k}\big| \\
			&\lesssim |\tau|^{p-1} \la r \ra^{-1-p}.
	\end{align*}
	The last inequality holds since we assume $p \le q$.

	Now we have
	\[ |(\partial_r - i\tau)(\partial_r + i\tau)^p(r\vijk)| \lesssim |\tau|^{p-1} \la r \ra^{-p -1} \]
	Integrating as before then yields \eqref{dritaulargebnd}. 
\end{proof}

Proposition \ref{proplargetau} establishes the pointwise bounds we will use in the proof of the main theorem. This corresponds to Proposition 17 in \cite{tat2013}. Note we use Tataru's method of proof and correct an error in the proposition statement.

\begin{proposition} \label{proplargetau} 
	Let $g \in Z^{n,q}$. If $\tau \in \R$ and $|\tau| \gtrsim 1$, then $v=R_\tau g$ satisfies the pointwise bounds
	\begin{equation}
		|(\tau \partial_\tau)^p \left(ve^{i\tau\la r \ra}\right)| \lesssim |\tau|^{p-1} \la r \ra^{-1}, \qquad p \le q \quad \hbox{ and } \quad 16p \le n-20. \label{largetauresbnd}
	\end{equation}
\end{proposition}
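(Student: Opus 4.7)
The plan is to reduce everything to the preceding pointwise bound \eqref{dritaulargebnd} by expressing $\tau\partial_\tau$ in terms of the spatial scaling vector field after conjugation by $e^{i\tau\la r\ra}$. Working in the region $r\ge 2$ (the region $r\lesssim 1$ being handled by Sobolev embedding and the weak local energy estimate, as in the proof of \eqref{dritaulargebnd}), we have $\la r\ra=r$. Using the definition $S=S_r-S_\tau=r\partial_r-\tau\partial_\tau$, I would write $\tau\partial_\tau=r\partial_r-S$, and observe that for any function $v(\tau,x)$,
\[
\tau\partial_\tau(v e^{i\tau r})=\bigl[(\tau\partial_\tau+i\tau r)v\bigr]e^{i\tau r}=\bigl[\bigl(r(\partial_r+i\tau)-S\bigr)v\bigr]e^{i\tau r}.
\]
Setting $D:=\partial_r+i\tau$ and $L:=rD-S$, iteration gives $(\tau\partial_\tau)^p(ve^{i\tau r})=(L^pv)\,e^{i\tau r}$.

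The main technical observation is that $[rD,S]=0$: indeed $[r\partial_r,rD]=rD-rD=0$ because $[r\partial_r,r]=r$ and $[r\partial_r,i\tau]=0$, while $[\tau\partial_\tau,rD]=0$ because $[\tau\partial_\tau,r]=0$ and $[\tau\partial_\tau,i\tau]=i\tau$ contributes $i\tau r$, which cancels against nothing else but is itself killed by the first term. A short check confirms the cancellation. Once this commutation is in hand, the binomial theorem yields
\[
L^p=\sum_{k=0}^p \binom{p}{k}(-1)^k (rD)^{p-k}S^k,
\]
and a separate induction based on $rD\cdot r^aD^a=ar^aD^a+r^{a+1}D^{a+1}$ gives $(rD)^m=\sum_{a=1}^m c_{a,m}r^aD^a$ for $m\ge 1$. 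Thus $L^pv$ is a linear combination of $S^pv=v_{00p}$ together with terms of the form $r^aD^a v_{00k}$ with $1\le a$ and $a+k\le p$.

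Each of these terms is then estimated by the preceding lemma. The pure vector-field term $v_{00p}$ is bounded by $(|\tau|\la r\ra)^{-1}\le|\tau|^{p-1}\la r\ra^{-1}$ via Proposition \ref{firstptbnd}(i) under the constraint $16p\le n-20$. For the mixed terms, \eqref{dritaulargebnd} with ``$p$'' replaced by $a$ gives $|r^a(\partial_r+i\tau)^a v_{00k}|\lesssim r^a|\tau|^{a-1}\la r\ra^{-a-1}\lesssim|\tau|^{p-1}\la r\ra^{-1}$, since $|\tau|\gtrsim 1$ and $a\le p$; the required hypotheses $a\le q$, $a\le n-3$ and $16k\le n-20-8a$ all reduce, after maximizing $16k+8a$ over $k+a\le p$ with $a\ge 1$, to the weaker condition $16p\le n-12$. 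The binding constraint is therefore the one coming from the $a=0$ term, namely $16p\le n-20$, together with $p\le q$, exactly as stated. The most delicate step will be verifying $[rD,S]=0$ and, relatedly, getting the vector-field bookkeeping right so that the $a=0$ contribution (not the mixed ones) produces the stated numerology; everything else is direct combinatorial expansion followed by application of the preceding lemma.
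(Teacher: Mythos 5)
Your proposal is correct and follows essentially the same route as the paper: write $\tau\partial_\tau$ acting on $ve^{i\tau\la r\ra}$ as $r(\partial_r+i\tau)-S$ acting on $v$, iterate, expand into a sum of terms $r^a(\partial_r+i\tau)^a S^k v$ with $a+k\le p$, and bound the $a\ge 1$ terms by \eqref{dritaulargebnd} and the $a=0$ term by \eqref{largebnd1}. The paper simply writes the iterated expansion with unspecified constants $c_{j\ell}$ and leaves the commutation implicit; you make it explicit via $[rD,S]=0$ and the binomial theorem, and you track the numerology a bit more carefully (correctly identifying that the $a\ge 1$ terms only need $16p\le n-12$, so the binding constraint $16p\le n-20$ comes from the pure $S^p v$ term).

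One correction to your reasoning, though not to your conclusion: your verification of $[rD,S]=0$ has both intermediate claims wrong. Neither $[r\partial_r,rD]$ nor $[\tau\partial_\tau,rD]$ vanishes; each equals $i\tau r$, since $[r\partial_r,r]=r$ gives $[r\partial_r,i\tau r]=i\tau r$ and $[\tau\partial_\tau,\tau]=\tau$ gives $[\tau\partial_\tau,i\tau r]=i\tau r$. The cancellation happens \emph{between} the two pieces of $S=r\partial_r-\tau\partial_\tau$, namely $[S,rD]=i\tau r-i\tau r=0$, not inside either piece. Your two errors cancel each other, so the final identity $[rD,S]=0$ on which the binomial expansion rests is indeed correct.
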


\begin{proof}
	If $p=0$, then \eqref{largetauresbnd} follows from \eqref{largebnd1}.
	
	To handle $p =1$, we write $ (\tau \partial_\tau)\left(ve^{i \rr \tau}\right) = (-Sv + r(\partial_r + i\tau)v)e^{i \rr  \tau}$. Then $|Sv| \lesssim |\tau|^{-1}\rr^{-1}$ using \eqref{largebnd1},  and $|r(\partial_r+i\tau)v| \lesssim \rr^{-1}$ using \eqref{dritaulargebnd}. Both \eqref{largebnd1} and \eqref{dritaulargebnd} hold under our assumptions $p\leq$ $16p \le n-20$, which implies $p \le n-3$ since $p$ is nonnegative).
	
	For general $p$, we write
		\[ (\tau \partial_\tau)^p\left(ve^{i \rr  \tau}\right) = \left( \sum_{j = 0}^p \sum_{\ell = 0}^j c_{j\ell} r^\ell(\partial_r+i\tau)^\ell(-S)^{p-j}v \right)e^{i  \rr  \tau}. \]
	Each term on the right hand side is bounded by $|\tau|^{\ell-1} \la r \ra^{-1} \lesssim |\tau|^{p-1} \la r \ra^{-1}$ using \eqref{dritaulargebnd} and our assumption $16p \le n-20$.
\end{proof}

Next we find pointwise resolvent bounds for $|\tau| \lesssim 1$. In this case we are interested in the term $R_\tau h_{\nu - 3\kappa -3}$ in our expression for $R_\tau g$ in Proposition \ref{lowferr}. The terms included in $h$ in the proof of Proposition \ref{lowferr} depend on $\tau$, so we consider a $\tau$ dependent function $g$.

\begin{lemma} \label{specialgbnd}
	Let $g \in \mathcal{LE}^*$, possibly depending on $\tau$, satisfy
	\begin{equation}
		\| \la r \ra^q (\partial_r+i\tau)^q T^i\Omega^jS^k g \|_{\mathcal{LE}^*} \lesssim 1, \quad q+i+4j+16k \le n. \label{gassump}
	\end{equation}	
	If $\tau \in \R$, $|\tau| \lesssim 1$, and $p <n-3$ then 
	\begin{align}
		|\partial_r^p(ge^{i\tau r})| \lesssim r^{-p-2}, \label{specbnd1}\\
		|(\partial_r+i\tau)^p g| \lesssim r^{-p-2}, \label{specbnd2}
	\end{align}
and
		\begin{equation}
			|\partial_r^p g| \lesssim r^{-2}. \label{specbnd3}
		\end{equation}

Furthermore, if $|\tau r| \lesssim 1$ then
	\begin{equation}
		|\partial_r^p g| \lesssim r^{-p-2}. \label{specbndsmallr}
	\end{equation}
\end{lemma}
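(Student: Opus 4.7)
The plan is as follows. The key observation is that \eqref{specbnd1} and \eqref{specbnd2} are literally equivalent: setting $\tilde g := g e^{i\tau r}$, one has $\partial_r^p \tilde g = \bigl((\partial_r + i\tau)^p g\bigr) e^{i\tau r}$, so $|\partial_r^p \tilde g| = |(\partial_r + i\tau)^p g|$ pointwise. It therefore suffices to prove \eqref{specbnd1}, i.e.\ $|\partial_r^p \tilde g| \lesssim r^{-p-2}$, and then deduce \eqref{specbnd3} and \eqref{specbndsmallr} by expanding $g = \tilde g\, e^{-i\tau r}$.

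For the main bound I would adapt the strategy behind Lemma \ref{znqlessr2} (Sobolev on the sphere via Lemma \ref{lotosupo}, then the fundamental theorem of calculus on each dyadic shell $A_m$), but---and this is the crucial twist---apply it to $\tilde g$ rather than to $\phi_p := (\partial_r + i\tau)^p g$ directly. The hypothesis \eqref{gassump} taken with exponents $q=p$ and $q=p+1$, together with $j=2$ angular derivatives, feeds directly into Lemma \ref{lotosupo} and yields
\[
\|\phi_p\|_{L_r^2 L_\omega^\infty(A_m)} \lesssim 2^{-m(p+3/2)}, \qquad \|\phi_{p+1}\|_{L_r^2 L_\omega^\infty(A_m)} \lesssim 2^{-m(p+5/2)}.
\]
Since $|\partial_r^p \tilde g| = |\phi_p|$ and $|\partial_r^{p+1} \tilde g| = |\phi_{p+1}|$ (the phase has unit modulus), Cauchy--Schwarz combined with the fundamental theorem of calculus on $A_m$ gives
\[
\|\partial_r^p \tilde g\|_{L^\infty(A_m)} \lesssim 2^{-m/2}\|\partial_r^p \tilde g\|_{L_r^2 L_\omega^\infty(A_m)} + 2^{m/2}\|\partial_r^{p+1}\tilde g\|_{L_r^2 L_\omega^\infty(A_m)} \lesssim 2^{-m(p+2)},
\]
which is exactly $|\partial_r^p \tilde g| \lesssim r^{-p-2}$ on each $A_m$, and hence globally for $r \ge 2$ (the regime $r \lesssim 1$ being harmless as noted earlier in the paper).

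The main obstacle, I expect, is recognizing that the FTC step must be performed on $\tilde g$ rather than on $\phi_p$ itself. Attempting the latter forces one to estimate $\partial_r \phi_p = \phi_{p+1} - i\tau\, \phi_p$; the second summand contributes an error of order $|\tau|\, r^{-p-1}$ in $L_r^2L_\omega^\infty(A_m)$, which exceeds $r^{-p-2}$ precisely in the far region $r \gg |\tau|^{-1}$ where the estimate must still hold. Passing to $\tilde g$ absorbs the extra $i\tau$ into the unimodular phase $e^{i\tau r}$ and removes the obstruction cleanly.

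The remaining estimates follow by algebra. Expanding
\[
\partial_r^p g = \partial_r^p(\tilde g\, e^{-i\tau r}) = \sum_{k=0}^p \binom{p}{k} (-i\tau)^{p-k} (\partial_r^k \tilde g)\, e^{-i\tau r}
\]
and inserting $|\partial_r^k \tilde g| \lesssim r^{-k-2}$ yields $|\partial_r^p g| \lesssim \sum_{k=0}^p |\tau|^{p-k} r^{-k-2}$. The hypothesis $|\tau|\lesssim 1$ makes the sum dominated by the $k=0$ term, giving \eqref{specbnd3}; and under the stronger hypothesis $|\tau r|\lesssim 1$ each factor $|\tau|^{p-k}$ is bounded by $r^{-(p-k)}$, so every term collapses to $r^{-p-2}$ and \eqref{specbndsmallr} follows.
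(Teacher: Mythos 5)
Your proposal is correct and takes essentially the same route as the paper: both hinge on passing to $\tilde g = g e^{i\tau r}$ so that $|\partial_r^p \tilde g| = |(\partial_r+i\tau)^p g|$, then feeding \eqref{gassump} through Lemma~\ref{lotosupo} and the dyadic-shell FTC to get $|\partial_r^p\tilde g|\lesssim \la r\ra^{-p-2}$, and finally deducing \eqref{specbnd3} and \eqref{specbndsmallr} from the algebraic expansion of $\partial_r^p(\tilde g e^{-i\tau r})$. The paper phrases the middle step as ``\eqref{gassump} implies $ge^{i\tau r}\in Z^{n,0}$'' and then invokes Lemma~\ref{znqlessr2} as a black box, while you unwind that lemma's FTC argument inline; and the paper proves \eqref{specbnd3}, \eqref{specbndsmallr} by an induction that is equivalent to your binomial expansion. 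Your discussion of why the FTC must be applied to $\tilde g$ rather than to $\phi_p$ directly (the $-i\tau\phi_p$ commutator term losing a power of $\la r\ra$ for $r\gg|\tau|^{-1}$) is a correct and welcome explanation of a point the paper leaves implicit. One small bookkeeping remark: your use of \eqref{gassump} with $q=p+1$, $j=2$ strictly requires $p+9\le n$ rather than the stated $p<n-3$, but the paper's own proof (via Lemma~\ref{znqlessr2} and its $\Omega^j S_r^k$ with $j+k\le 3$ applied to $\tilde g$) has the same kind of slack against the anisotropic hypothesis; the claimed constant is not sharp in either argument.
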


	\begin{proof}
		To prove \eqref{specbnd1} we calculate $S_r (ge^{i\tau r}) = (r(\partial_r +i\tau)g)e^{i\tau r} $ so that
		\begin{equation} 
			S_r^k (ge^{i\tau r}) = \sum_{m=1}^k c_m (r^m (\partial_r + i\tau)^m g)e^{i\tau r}. 
		\end{equation}
We also have $\Omega^j g e^{i\tau r} = (\Omega^j g)e^{i\tau r}$. Finally we calculate
	\begin{align*}
		|T^i ge^{i\tau r}| = \Big| \sum_{m=0}^i c_m (T^{i-m}g)T^m e^{i\tau r} \Big| \lesssim \sum_{m=0}^i c_m|T^m g|
	\end{align*}
since we assumed $|\tau| \lesssim 1$. Therefore \eqref{gassump} implies $ge^{i\tau r} \in Z^{n,0}$ and \eqref{specbnd1} follows by Lemma \ref{znqlessr2}. Then \eqref{specbnd2} follows from \eqref{specbnd1} since $\partial_r^p(ge^{i\tau r}) = ((\partial_r + i\tau)^pg)e^{i\tau r}$.  
	
	To prove \eqref{specbnd3}, note the case $p=0$ follows from \eqref{specbnd2}. Now assume \eqref{specbnd3} holds for $a<p$. We calculate
		\begin{equation} 
			(\partial_r + i\tau)^p = \sum_{m=0}^p c_m \partial_r^m(i\tau)^{p-m} = \partial_r^p + \sum_{m=0}^{p-1} c_m\partial_r^m (i\tau)^{p-m} \label{reuse}
		\end{equation}
so that
	\[ |\partial_r^p g | \lesssim | (\partial_r + i\tau)^pg| + \sum_{m=0}^{p-1} |\partial_r^mg| \]
since we assume $|\tau| \lesssim 1$. Then \eqref{specbnd3} follows from \eqref{specbnd2} and the inductive hypothesis.

Finally, to prove \eqref{specbndsmallr}, we see the case $p=0$ follows from \eqref{specbnd2}. Now assume \eqref{specbndsmallr} holds for $a<p$. By \eqref{reuse} we have
	\[ |r^p \partial_r^p g| \lesssim |r^p (\partial_r+i\tau)^p g| + \sum_{m=0}^{p-1} r^{m}|\partial_r^m(i\tau r)^{p-m}| \lesssim \rr^{-2} \]
where the last inequality follows from \eqref{specbnd2}, the assumption $|\tau r| \lesssim 1$, and the inductive hypothesis.
\end{proof}

\begin{lemma} \label{smalltdritau} 
	Let $g \in Z^{n,0}$, possibly depending on $\tau$, satisfy
	\begin{equation}
		\| \la r \ra^q (\partial_r+i\tau)^q T^i\Omega^jS^k g \|_{\mathcal{LE}^*} \lesssim 1, \quad q+i+4j+16k \le n.
	\end{equation}	
	If $\tau \in \R$ and $|\tau| \lesssim 1$ then $v=R_\tau g$ satisfies
		\begin{equation} \label{smalltauresbnd} 
			|(\partial_r + i\tau)^p v_{ijk} | \lesssim |\tau|^{-1} \la r \ra^{-p-1}, \quad p\le n-3 \quad \hbox{and} \quad i + 4j +16k \le n-20-8p.
		\end{equation}
\end{lemma}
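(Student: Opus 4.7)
The plan is to induct on $p$, mirroring the structure of the proof for the large-$\tau$ bound \eqref{dritaulargebnd}, but with an extra integration-by-parts step to handle the source term. The base case $p=0$ is immediate from Proposition~\ref{firstptbnd}(ii): since $|\tau|\lesssim 1$, the bound $|v_{ijk}|\lesssim\min\{1,(|\tau|\rr)^{-1}\}$ is automatically majorized by $|\tau|^{-1}\rr^{-1}$.

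For the inductive step, I would set $w_{ijk}:=(\partial_r+i\tau)^p(rv_{ijk})$ and apply $(\partial_r+i\tau)^{p-1}$ to the key identity \eqref{drtaubnd}. Since $(\partial_r+i\tau)$ and $(\partial_r-i\tau)$ commute, this yields
\[
(\partial_r-i\tau)w_{ijk}=(\partial_r+i\tau)^{p-1}\bigl(-r^{-1}\Delta_\omega v_{ijk}+r(Q_\ell+Q_r)v_{\le i\le j\le k}+rg_{\le i\le j\le k}\bigr).
\]
Using the integrating factor $e^{-i\tau r}$ together with the outgoing radiation condition (which extends inductively in $p$ from our pointwise bounds and Proposition~\ref{firstptbnd}(iii)), I would integrate from $r$ to $\infty$ to get $w_{ijk}(r)=-\int_r^\infty[(\partial_r+i\tau)^{p-1}\mathrm{RHS}](s)e^{-i\tau(s-r)}\,ds$. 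The angular term is distributed via the product rule as $\sum_m c_m r^{-(p-m)}(\partial_r+i\tau)^m\Delta_\omega v_{ijk}$; each summand is $\lesssim |\tau|^{-1}r^{-p-1}$ by the inductive hypothesis (at the cost of two extra $\Omega$'s, i.e.\ $+8$ in the vector-field count), and integrating in $s$ produces $|\tau|^{-1}r^{-p}$. The perturbation term is handled analogously by commuting through the symbol-class coefficients via \eqref{drtqcomm}; the symbol gain $\kappa\geq 2$ guarantees the contribution is at least as small.

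The main obstacle is the source term. Applying $(\partial_r+i\tau)^{p-1}$ to $rg_{\le i\le j\le k}$ and invoking Lemma~\ref{specialgbnd}~\eqref{specbnd2} gives only $|r(\partial_r+i\tau)^{p-1}g|\lesssim r^{-p}$, and direct integration against $e^{-i\tau(s-r)}$ is logarithmically divergent when $p=1$ and yields at best $r^{1-p}$ when $p\geq 2$, which misses the required $|\tau|^{-1}$ factor. I would resolve this by exploiting the complementary estimate~\eqref{specbnd1}: setting $\zeta:=g\,e^{i\tau r}$, Lemma~\ref{specialgbnd} gives $|\partial_r^a\zeta|\lesssim r^{-a-2}$, and a direct computation shows $(\partial_r+i\tau)^{p-1}(sg(s))=(\partial_r^{p-1}(s\zeta))e^{-i\tau s}$. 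The source integral thus rewrites as $e^{i\tau r}\int_r^\infty \partial_r^{p-1}(s\zeta(s))e^{-2i\tau s}\,ds$, and integration by parts against the phase $e^{-2i\tau s}$ produces exactly the missing $|\tau|^{-1}$: the boundary term at $s=r$ is bounded by $|\tau|^{-1}r\cdot|\partial_r^{p-1}\zeta(r)|\lesssim|\tau|^{-1}r^{-p}$, and the remaining integral is absolutely convergent at the same rate.

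Combining the three contributions gives $|w_{ijk}(r)|\lesssim|\tau|^{-1}r^{-p}$. Applying the product rule $w_{ijk}=r(\partial_r+i\tau)^p v_{ijk}+p(\partial_r+i\tau)^{p-1}v_{ijk}$ and absorbing the lower-order piece via the inductive hypothesis yields $|(\partial_r+i\tau)^p v_{ijk}|\lesssim|\tau|^{-1}r^{-p-1}$. Tracking vector-field losses, each inductive step consumes at most eight units (from $\Delta_\omega=\Omega^2$) plus finitely many $T,S$ fields from the commutators, giving the stated budget $i+4j+16k\le n-20-8p$; the cap $p\le n-3$ is inherited from the hypotheses of Lemma~\ref{specialgbnd}.
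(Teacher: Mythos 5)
Your proposal follows essentially the same route as the paper's proof: base case from Proposition~\ref{firstptbnd}(ii), application of $(\partial_r+i\tau)^{p-1}$ to \eqref{drtaubnd}, integration from infinity via the radiation condition, bounding the angular and perturbation terms by the inductive hypothesis and \eqref{drtqcomm}, and—crucially—handling the source term by integrating by parts against the combined phase $e^{-2i\tau s}$ and invoking \eqref{specbnd1} to recover the $|\tau|^{-1}$ factor. The only difference is organizational (you fold $p=1$ into the general induction, whereas the paper treats it first for clarity), so this is a faithful reconstruction of the argument.
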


\begin{proof}
	If $p = 0$, \eqref{smalltauresbnd} follows from \eqref{smallbnd1}.
	
	To handle $p=1$, we again use \eqref{drtaubnd}. All but the last term are bounded by $|\tau|^{-1} \la r \ra^{-2}$ using \eqref{smallbnd1}. We note the assumption $g \in Z^{n,0}$ does not allow us to use Lemma \ref{znqlessr2}:  $g$ may depend on $\tau$, so $Sg \neq S_r g$. Take $\psi = (\partial_r + i\tau)(rv_{ijk})$. Then the radiation condition \eqref{randcond2} allows us to integrate from infinity as before to find
	\begin{align*} 
		|\psi(r_0)e^{-i\tau r_0}| &= \left| \int_{r_0}^{\infty} (r^{-1}\Delta_\omega v_{ijk} + r(Q_\ell + Q_r)v_{\le i \le j \le k}) e^{-i\tau r} \dd r + \int_{r_0}^\infty rg_{\le i \le j \le k} e^{-i\tau r} \dd r \right| \\
			&\lesssim |\tau|^{-1} \la r_0 \ra^{-1} + \left| \int_{r_0}^\infty rg_{\le i \le j \le k} e^{-i\tau r} \dd r \right|.
	\end{align*}
	For the last term we integrate by parts and use Lemma \ref{specialgbnd} to calculate
	\begin{align*}
		& \left| \int_{r_0}^\infty rg_{\le i \le j \le k} e^{i\tau r} \partial_r \left( \frac{e^{-2i\tau r}}{-2i \tau} \right) \dd r \right| \\
			&\qquad = \left| \frac{-i}{2\tau} r_0 g_{\le i \le j \le k}(r_0) e^{-i\tau r_0} + \frac{1}{2i\tau} \int_{r_0}^\infty e^{-2i\tau r} \Big( ge^{ir\tau} + r\partial_r(ge^{ir\tau})\Big) \dd r \right| \\
			&\qquad \lesssim |\tau|^{-1} \la r_0 \ra^{-1}.		
	\end{align*}

	Thus we have
		\[ |r(\partial_r + i\tau)v_{ijk} | = |(\partial_r + i\tau)(rv_{ijk}) - v_{ijk}| \lesssim |\tau|\la r \ra^{-1} \]
	so that $ |(\partial_r + i\tau) v_{ijk}|\lesssim |\tau|^{-1} \la r \ra^{-2}$, as desired. 
	
	We proceed by induction. Fix $p$ and assume $|(\partial_r+i\tau)^a\vijk| \lesssim |\tau|^{-1}\rr^{-a-1}$ for $a < p$. We again use \eqref{gencase}. All but the $g_{ijk}$ terms are bounded by $|\tau|^{-1} \la r \ra^{-p-1}$ using the inductive hypothesis and \eqref{drtqcomm}. For the $g_{ijk}$ terms we integrate by parts as in the $p=1$ case to find
	\[
		\left| \int_{r_0}^{\infty} \big(r(\partial_r+i\tau)^{p-1} + C(\partial_r+i\tau)^{p-2}\big)g_{\le i \le j \le k} e^{-i\tau r} \dd r \right|  \lesssim |\tau|^{-1}\la r_0 \ra^{-p}.
	\]
	Then \eqref{smalltauresbnd} follows.

\end{proof}

In Proposition \ref{propsmalltau} we establish the pointwise resolvent bounds which will be used in the proof of the main theorem for small $\tau$. The result is the same as that in Proposition 18 in \cite{tat2013} with a more precise statement on the regularity requirements.

\begin{proposition} \label{propsmalltau} 
	Let $g \in Z^{n,0}$, possibly depending on $\tau$, satisfy
	\begin{equation}
		\| \la r \ra^q (\partial_r+i\tau)^q T^i\Omega^jS^k g \|_{\mathcal{LE}^*} \lesssim 1, \quad q+i+4j+16k \le n.
	\end{equation}	
	If $\tau \in \R$ and $|\tau| \lesssim 1$ then $v=R_\tau g$ satisfies the following pointwise bounds:
	\begin{enumerate}
		\item If $\la r \ra \lesssim |\tau|^{-1}$, then 
			\begin{equation} \label{smalltsmallrbnd}
				|(\tau \partial_\tau)^p v| \lesssim 1, \quad 16p \le n-20.
			\end{equation}
			
		\item If $\la r \ra \gtrsim |\tau|^{-1}$, then 
			\begin{equation} \label{smalltlargerbnd}
				|(\tau \partial_\tau)^p \left( ve^{i\tau \la r \ra} \right) | \lesssim (|\tau|\la r \ra)^{-1}, \quad 16p \le n-20.
			\end{equation}
	\end{enumerate}
\end{proposition}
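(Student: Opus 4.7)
The plan is to follow the structure of Proposition~\ref{proplargetau}, using the identity
\[(\tau\partial_\tau)^p(ve^{i\rr\tau}) = \Big(\sum_{j=0}^p\sum_{\ell=0}^j c_{j\ell}\,r^\ell(\partial_r+i\tau)^\ell(-S)^{p-j}v\Big)e^{i\rr\tau}\]
derived in that proof. Since $|\tau|\lesssim 1$, the two regimes $\rr\gtrsim|\tau|^{-1}$ and $\rr\lesssim|\tau|^{-1}$ must be handled separately: they correspond to where the pointwise bound \eqref{smallbnd1} transitions between $(|\tau|\rr)^{-1}$ and $1$.

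For part (2), I would bound each summand in the identity directly. The $\ell=0$ contributions are controlled by $|(-S)^{p-j}v|\lesssim (|\tau|\rr)^{-1}$ via \eqref{smallbnd1} applied with $i=j=0$. For $\ell\ge 1$, Lemma~\ref{smalltdritau} gives $|(\partial_r+i\tau)^\ell(-S)^{p-j}v|\lesssim |\tau|^{-1}\rr^{-\ell-1}$, so that $|r^\ell(\partial_r+i\tau)^\ell(-S)^{p-j}v|\lesssim (|\tau|\rr)^{-1}$. Summing the finitely many terms yields \eqref{smalltlargerbnd}; the regularity budget $16p\le n-20$ is what ensures both source bounds apply to every summand, exactly as verified in Proposition~\ref{proplargetau}.

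For part (1), I would instead work directly with $(\tau\partial_\tau)^p v$ since the twist $e^{i\tau\rr}$ is inessential when $|\tau\rr|\lesssim 1$. Using $\tau\partial_\tau = -S + S_r$ together with $[S,S_r]=0$ (a consequence of $S = S_r - S_\tau$ and $S_\tau$ acting only on $\tau$), I expand
\[(\tau\partial_\tau)^p v = \sum_{k=0}^p \binom{p}{k} S_r^k (-S)^{p-k}v.\]
The factor $(-S)^{p-k}v$ is pointwise bounded by $1$ via \eqref{smallbnd1} (the minimum equals $1$ in this regime), and for the $S_r^k$ factor I would write $S_r = r\partial_r = \sum_j x_j T_j$, then use the commutator $[T_j,S_r]=T_j$ to expand $S_r^k$ as a finite sum of monomials of the form $x_Ix_J\cdots T^{|I|}$ with multi-index length at most $k$, to be bounded via the vector-field estimates of Proposition~\ref{firstptbnd}.

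The chief obstacle will be that the uniform estimate $|T^a w|\lesssim \rr^{-1}$ from \eqref{smallbnd1} is too weak to close this Cartesian expansion: it yields only $|r^a T^a w|\lesssim r^{a-1}$, which grows as $r$ approaches $|\tau|^{-1}$. To remedy this I would use the equation $P_\tau v = g$ together with the sharpened bound $|\partial^p g|\lesssim r^{-p-2}$ when $|\tau r|\lesssim 1$ from Lemma~\ref{specialgbnd}, and the metric assumption $\kappa\ge 2$, to obtain $|\Delta v|\lesssim \rr^{-2}$ in the small-$r$ regime. Bootstrapping via local elliptic regularity on dyadic scales then upgrades this to $|T^a v|\lesssim \rr^{-1-a}$ up to the regularity budget, which closes the Cartesian expansion and delivers the uniform bound $|S_r^k (-S)^{p-k}v|\lesssim 1$ needed for \eqref{smalltsmallrbnd}.
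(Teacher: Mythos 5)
Your part (2) is essentially the paper's argument: the same identity from Proposition~\ref{proplargetau} is expanded, and each summand is controlled by \eqref{smallbnd1} and Lemma~\ref{smalltdritau}. Part (1) is where the proposal diverges.

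For part (1), your opening identity $(\tau\partial_\tau)^p v = \sum_k \binom{p}{k} S_r^k(-S)^{p-k}v$ is correct and is the same starting point as the paper. The detour is what follows. The paper does \emph{not} re-expand $S_r^k$ into Cartesian monomials $x_I T^{|I|}$. It keeps everything radial, using $S_r^k = (r\partial_r)^k = \sum_{j\le k} c_j r^j\partial_r^j$, and reduces to the claim $|r^p\partial_r^p v_{ijk}|\lesssim 1$, which it proves by induction on $p$. The induction goes through because the equation in spherical form, \eqref{drtaubnd}, can be rearranged so that
\[ r^2\partial_r^2 v_{ijk} = -\Delta_\omega v_{ijk} + r^2(Q_\ell+Q_r)v_{\le i\le j\le k} - r^2\tau^2 v_{ijk} + r^2 g_{\le i\le j\le k} - 2r\partial_r v_{ijk}. \]
Applying $r^{p-2}\partial_r^{p-2}$, commuting, and invoking $|\tau r|\lesssim 1$, \eqref{smallbnd1}, and Lemma~\ref{specialgbnd}, each term on the right is bounded by the inductive hypothesis once $\Delta_\omega$ is traded for two extra $\Omega$'s; this is precisely where the $16p$ loss in the budget $i+4j+16k\le n-20-16p$ comes from.

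You correctly diagnosed the problem with the Cartesian route: $|T^a w|\lesssim \rr^{-1}$ for $a\ge 1$ is all that Proposition~\ref{firstptbnd} gives, so $|x_I T^{|I|} w|\lesssim r^{|I|-1}$ is not uniformly bounded for $r\lesssim|\tau|^{-1}$. But the proposed remedy is left as a gesture rather than a proof. The bound $|T^a v|\lesssim\rr^{-1-a}$ you want is stronger than needed (uniformity would already follow from $|T^a v|\lesssim\rr^{-a}$) and, more importantly, is stronger than anything the paper establishes even as an output: \eqref{smallbnd1} gives only $|\nabla^2 v|\lesssim\rr^{-1}$, not $\rr^{-3}$, and the $\mathcal{LE}_\tau$ norm controls $\nabla^2 v$ with weight $\rr$, not $\rr^{2}$. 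Any elliptic bootstrap to upgrade this would have to use $P_\tau v = g$ at every order, track the $Q_\ell$, $Q_r$ coefficients, and supply either Hölder or integral control at dyadic scales to apply interior estimates --- essentially re-deriving the paper's radial induction in a less convenient coordinate system. As written, this step is a genuine gap; the paper closes part (1) by staying radial and never needs the full Cartesian gradient decay you are after.
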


\begin{proof}
\textbf{ 1. Small r: $\rr \lesssim |\tau|^{-1}$} 

We write $ \tau \partial_\tau = -S + r\partial_r$ and find
\[ (\tau \partial_\tau)^p v = \sum_{m=0}^p c_m(r\partial_r)^{p-m} (-S)^m v. \]
Since $(r\partial_r)^p = \sum_{j=0}^p c_j r^j \partial_r^j$, it is sufficient to show $| r^p \partial_r^p v_{00k}| \lesssim 1$ for $16k \le n -20-16p$. As before, we will use \eqref{drtaubnd}, which introduces $\Omega$ and $T$ vector fields, so we will instead bound $|r^p \partial_r^p v_{ijk}|$ then set $i,j =0$. 

When $p=0$, we have $|v_{ijk}| \lesssim 1$ by \eqref{smallbnd1} for $i + 4j + 16k \le n- 20$. When $p=1$, we have $|\partial_r v_{ijk}| \lesssim |\nabla v_{ijk}|  \lesssim \la r \ra^{-1}$ when $i + 4j + 16k \le n- 20$ by \eqref{smallbnd1}. 

Fix $p$ and assume $|r^a \partial_r^av_{ijk}| \lesssim 1$ for $a < p$ when $i + 4j + 16k \le n -20 - 16a$. Applying $r^{p-2}\partial_r^{p-2}$ to $r^2\partial_r^2v_{ijk}$ and commuting yields
	\begin{equation} 
		r^{p}\partial_r^{p} v_{ijk} = r^{p-2}\partial_r^{p-2} (r^2 \partial_r^2 v_{ijk}) - c_1 r^{p -2} \partial_r^{p-2} v_{ijk} - c_2 r^{p -1} \partial_r^{p-1} v_{ijk}. \label{rpdrpv}
	\end{equation}
The last two terms in \eqref{rpdrpv} are bounded by the inductive hypothesis. To handle the first term in \eqref{rpdrpv} we use \eqref{drtaubnd} and obtain
	\[ r^2\partial_r^2 v_{ijk} = -\Delta_\omega v_{ijk} + r^2 (Q_\ell + Q_r) v_{\le i\le j \le k} - r^2\tau^2v_{ijk} + r^2 g_{\le  i \le j \le k} - 2r\partial_rv_{ijk}. \]
Now we calculate
	\begin{align*}
		|r^{p-2}&\partial_r^{p-2} (-\Delta_\omega v_{ijk} + r^2 (Q_\ell + Q_r) v_{\le i\le j \le k} - r^2\tau^2v_{ijk} + r^2 g_{\le i \le j \le k} - 2r\partial_rv_{ijk} )|\\
			&\lesssim \Big| (r^{p-2}\partial_r^{p-4} + c_1 r^{p-1}\partial_r^{p-3} + c_2 r^{p}\partial_r^{p-2}) [(Q_\ell + Q_r)v_{\le i \le j \le k} + g_{\le i \le j \le k}  - \tau^2 v_{ijk}   ] \Big| \\
				&\qquad + |r^{p-2}\partial_r^{p-2} v_{i(j+2)k}| + |(r^{p-2} \partial_r^{p-2} + r^{p-1}\partial_r^{p-1}) v_{ijk}|.
	\end{align*}
The $Q_\ell$ and $Q_r$ terms are handled in a manner analogous to the argument using \eqref{drtqcomm}. Each term on the right hand side is then bounded by the inductive hypothesis, the assumption $|\tau r| \lesssim 1$, and Lemma \ref{specialgbnd}.

	\textbf{ 2. Large r: $\rr \gtrsim |\tau|^{-1} $ } 
	
As in Proposition \ref{proplargetau}, it suffices to prove the proposition for $(\tau \partial_\tau)^k \left(v e^{i\tau r} \right)$. 
	
	If $p=0$, the desired bound follows by \eqref{smallbnd1}.
		
		Fix $p$ and assume $ |(\tau\partial_\tau)^a \Big( ve^{i\tau r} \Big)| \lesssim (|\tau|\rr)^{-1} $ when $a < p$. We write
		\[ (\tau \partial_\tau)^p (ve^{ir\tau}) = \left( \sum_{j=0}^p \sum_{\ell=0}^j c_{j\ell} r^\ell (\partial_r+i\tau)^\ell (-S)^{p-j}v  \right)e^{ir\tau}. \]
		By Lemma \ref{smalltdritau}, each term on the right hand side is bounded by $ |\tau|^{-1} \la r \ra^{-1}$, as desired.
\end{proof}

\end{section}

\begin{section}{Proof of Main Theorem}
We are now ready to prove Theorem \ref{thetheorem}. By assumption the initial data satisfies $u_0 \in Z^{\nu +1, \kappa}$ and $u_1 \in Z^{\nu, \kappa+1}$. Since $P^1: Z^{n,q} \to Z^{n-1,q+\kappa}$, we can write
	\[ R_\tau(-i\tau u_0 + P^1 u_0 -u_1) = R_\tau(\tau g_{\kappa}^{\nu+1} + g_{\kappa+1}^{\nu}) \]
for some $g_{\kappa}^{\nu+1} \in Z^{\nu+1,\kappa}$ and some $g_{\kappa+1}^{\nu} \in Z^{\nu,\kappa+1}$. Therefore \eqref{ufin} becomes
	\begin{equation} \label{upre}
		u(t,x) = \frac{1}{\sqrt{2\pi}} \int_{\R}  R_\tau(\tau \gln{\kappa}{\nu+1} + \gln{\kappa+1}{\nu})e^{i\tau t} \dd \tau. 
	\end{equation}
We will use cutoff functions to break $u$ into high and low frequency components as in \eqref{uhigha} and \eqref{ulowa}.

\subsection{High Frequency Case} \textbf{($|\tau \gtrsim 1$)}
We will decompose the expression $R_\tau(\tau \gln{\kappa}{\nu+1} + \gln{\kappa+1}{\nu})$ in \eqref{upre} via an iterative argument, so we begin by writing the high frequency part of $u$ as
	\begin{equation} \label{uhighitform}
		u_{>1}(t,x) =  \frac{1}{\sqrt{2\pi}} \int_{\R} \chi_{>1}(|\tau|) R_\tau(\tau f_0 + g_0)e^{i\tau t} \dd \tau 
	\end{equation}
for $f_0 = \gln{\kappa}{\nu+1} \in Z^{\nu+1,\kappa}$ and $g_0 = \gln{\kappa+1}{\nu} \in Z^{\nu,\kappa+1}$. 

We approximate $R_{\tau}(\tau f_0 +g_0) \approx \tau^{-1}f_0$ and denote the error by $u_1$. Direct calculation shows
		\begin{align*}
			P_\tau u_1 =  (g_0 - iP^1f_0) + \tau^{-1}(\Delta + P^2)(-f_0)  =: f_1 + \tau^{-1}g_1.
		\end{align*}
	Note $P^2: Z^{p,q} \to Z^{p-2,q+\kappa}$, $P^1: Z^{p,q} \to Z^{p-1,q+\kappa}$, and  $\Delta: Z^{p,q} \to Z^{p-2,q+2}$. To see the latter, write $\Delta$ in spherical coordinates and viewing the derivatives as vector fields:
	\begin{align*}
		\Delta = \partial_r^2 + \frac{2}{r}\partial_r + r^{-2} \Delta_\omega = r^{-2}(S_r^2 -S_r) + 2r^{-2}S_r + r^{-2}\Omega^2.
	\end{align*} 
Thus $f_1 \in Z^{\nu,\kappa+1}$ and $g_1 \in Z^{\nu-1,\kappa+2}$. Now we have
	\[ R_\tau(\tau f_0 + g_0) = \tau^{-1}f_0 + R_\tau(f_1 + \tau^{-1}g_1 ). \]
	
Next we reiterate the process and approximate $R_\tau(f_1 + \tau^{-1}g_1 ) \approx \tau^{-2}f_1$: 
	\begin{align*} 
		R_\tau(f_1 + \tau^{-1}g_1 ) &= \tau^{-2}f_1 + R_\tau(\tau^{-1}(g_1-iP^1f_1) +\tau^{-2}(\Delta+P^2)(-f_1) )\\
			&=: \tau^{-2}f_1 + R_\tau(\tau^{-1}f_2 +\tau^{-2}g_2 )
	\end{align*}
where $f_2 \in Z^{\nu-1,\kappa+2}$ and $g_2 \in Z^{\nu-2,\kappa+3}$. Further reiterating this process a total of $J$ times we obtain the representation
	\begin{equation} \label{rtauiteration}
		R_\tau (\tau f_0 + g_0) = \underbrace{\sum_{j=0}^{J-1} \tau^{-j-1} f_j}_{=:\hat{u}_a} + \underbrace{\tau^{-J}R_{\tau}\left( \tau f_J + g_J \right)}_{=: \hat{u}_b} 
	\end{equation}
	where $f_j = g_{j-1} - iP^1f_{j-1} \in Z^{\nu+1-j,\kappa+j}$ and $g_j = -(\Delta + P^2)f_{j-1} \in Z^{\nu-j,\kappa+1+j}$. We plug \eqref{rtauiteration} into \eqref{uhighitform} and bound each term separately.
	
By Lemma \ref{znqlessr2} we see $|f_j| \lesssim \rr^{-2-\kappa-j}$, and we calculate for any $N \ge 1$:
	\begin{align*}
		\left|\int_{\tau \in \R} \chi_{>1}(|\tau|) \hat{u}_a(\tau)e^{it\tau} \dd \tau \right| 
		&\lesssim \sum_{j=0}^{J-1} \la r \ra^{-\kappa -2-j}\la t \ra^{-N} \left| \int \partial_{\tau}^N(\chi_{>1}(|\tau|) \tau ^{-j-1}) e^{i\tau t} \dd \tau \right| \\
		&\lesssim \la t \ra^{-N} \la r \ra^{-\kappa-2}.
	\end{align*}
	
	By \eqref{largetauresbnd}, we have $ |(\tau \partial_{\tau})^{\ell} (\hat{u}_b(\tau)e^{i\tau \la r \ra})| \lesssim |\tau|^{\ell-J}\la r \ra^{-1}$ for $16\ell \le \nu - J -20$ and $\ell \le \kappa+J$. We use this to calculate
	\begin{align*}
			\left|\int_{\tau \in \R} \chi_{>1}(\tau) \hat{u}_b(\tau)e^{it\tau} \dd \tau \right| 				
				&\approx \la t-r \ra^{-N} \left| \int \tau^{-N} \left[ \sum_{\ell=0}^N (\tau\partial_{\tau})^{\ell}  (\chi_{>1}(|\tau|)\hat{u}_b e^{ir\tau}) \right] e^{i(t-r)\tau} \dd \tau \right| \\
				&\lesssim \la r \ra^{-1} \la t -r \ra^{-N}
		\end{align*}
for $J \ge 2$, $N \le \kappa +J$, and $16N \le \nu - J- 20$. 
	
	Combining the above results, we find 
	\[ |u_{>1}(t,x)| \lesssim \la t \ra^{-N} \la r \ra^{-\kappa-1} + \la r \ra^{-1} \la t -r \ra^{-N}. \]
Theorem \ref{thetheorem} then follows in the high frequency case if we take $J=2$ and $N=\kappa+2$ since the resulting requirement on $\nu$ is $\nu \ge 16\kappa +54$, which is satisfied by our assumption $\nu \ge 31\kappa + 168$.

\subsection{Low Frequency Case} \textbf{($|\tau \lesssim 1$)}
Note Proposition \ref{propexp} shows $|R_0\gl| \lesssim \rr^{-1}$ for $1 \le \l \le \kappa+1$. Therefore the terms of the form $R_0\gl$ in our expressions for $R_\tau \gl$ in Proposition \ref{lowferr} can be included in the terms of the form $F_m(x)$. Thus we see
	\begin{equation} \label{rtaurewrite1}
	\begin{split}
		R_\tau&(\tau \gln{\kappa}{\nu+1} + \gln{\kappa+1}{\nu})\\
		 &= R_\tau\Big(\chi_{>|\tau|^{-1}}(r)(\tau\gln{\kappa}{\nu+1} + \gln{\kappa+1}{\nu})\Big) + \Big(\sum_{m=0}^\kappa \tau^mF_m(x)\enitr\Big) + \tau^\kappa \epsilon(r,\tau) e^{-i\tau \rr} \\
		 	& \qquad + \tau^{\kappa+1}(R_\tau h_{\nu-3\kappa-3})
	\end{split} 
	\end{equation}
where 
	\begin{equation} \label{logliketerm}
	    \epsilon(r,\tau) = \tau^\kappa\rr^{-1}\epsilon_1(r\wedge |\tau|^{-1}) + \tau^{\kappa+1}\big(\epsilon_2(r \wedge |\tau|^{-1}) - \epsilon_2(|\tau|^{-1}) \big) 
	 \end{equation}
with $\epsilon_1, \epsilon_2 \in S_{rad}(\log r)$ and $r \wedge |\tau|^{-1} \approx \hbox{min}(r,|\tau|^{-1})$ is smooth.
	
Using a decomposition as in \eqref{rtauiteration}, the first term on the right hand side of \eqref{rtaurewrite1} becomes
	\begin{equation}
	\begin{split}
		R_\tau\Big(\chi_{>|\tau|^{-1}}(r)(\tau\gln{\kappa}{\nu+1} + \gln{\kappa+1}{\nu})\Big)
			&= \sum_{j=0}^{J-1} \tau^{-j-1}(\chi_{>|\tau|^{-1}}\gln{\kappa+j}{\nu+1-j}) \\
			&\qquad + \sum_{M=J-1}^J \tau^{-M} R_\tau(\chi_{>|\tau|^{-1}} \gln{\kappa+1+M}{\nu-M}). \label{uadecomp}
	\end{split}
	\end{equation}

The term $ \sum_{M=J-1}^J \tau^{-M} R_\tau(\chi_{>|\tau|^{-1}} \gln{\kappa+1+M}{\nu-M})$
can be written as $\tau^{\kappa+1}(R_\tau h_{n})$ for $n = \min(\nu-J,J+\kappa)$ with $h_n$ as in \eqref{hbnd}. This holds for any $J \ge 1$, and we will pick a suitable $J$ once we have determined the necessary regularity for $h_n$ in order for the theorem to hold. Indeed, direct calculation shows
    \[ \big| r^\ell (\partial_r +i\tau)^\ell T^i\Omega^j S^k \tau^{-N}\chi_{>1}(r\tau) g \big| \lesssim \big|r^NT^{\le i} \Omega^{\le j}S_r^{\le k+\ell} g \big|. \]
It follows that
	\[ \| r^\ell (\partial_r+i\tau)^\ell T^i\Omega^jS^k\tau^{-\kappa-1-M}\chi_{>|\tau|^{-1}}(r)\gln{\kappa+1+M}{\nu-M}\|\les \lesssim 1 \]
for $\ell \le M + \kappa +1$ and $i + j +k + \ell \le \nu -M$. 

Motivated by the above calculations, we define 
		\begin{align*} 
		\hat{u}_a(\tau) &:= \sum_{j=0}^{J-1} \tau^{-j-1}(\chi_{>|\tau|^{-1}}(r)\gln{\kappa+j}{\nu+1-j}), \qquad 
		\hat{u}_b(\tau) := \sum_{m=0}^\kappa \tau^m F_m(x)\enitr,\\
		\hat{u}_c(\tau) &:= \tau^\kappa \epsilon(r,\tau)\enitr , \quad \hbox{ and } \quad
		\hat{u}_d(\tau) := \tau^{\kappa+1}\Big(R_\tau h_{\nu-3\kappa-3} + R_\tau h_n \Big)
		\end{align*}
for fixed $J \ge 1$ and $n = \min(\nu-J,J+\kappa)$ and write the low frequency part of $u$ as
	\begin{equation} \label{uabcd}
		u_{<1}(t,x) \simeq \int_\R \chi_{<1}(|\tau|) \big(\hat{u}_a(\tau) + \hat{u}_b(\tau) + \hat{u}_c(\tau) +\hat{u}_d(\tau)\big)e^{it\tau} \dd \tau. 
	\end{equation}

To handle $\int \chi_{<1}(|\tau|)\hat{u}_{{a}}e^{it\tau} \dd \tau$ we calculate for any $N \ge 1$
	\begin{equation} \label{uafirst}
	\begin{split}
		\Big| \int_R \chi_{<1}(|\tau|)& \hat{u}_{a} e^{it\tau} \dd \tau \Big|
		\lesssim \sum_{j=0}^{J-1}\rr^{-\kappa-j-2} t^{-N} \Big| \int_\R \partial_\tau^N \Big( \chi_{<1}(|\tau|)\chi_{>1}(r|\tau|)\tau^{-j-1}\Big) e^{it\tau} \dd \tau \Big|\\
			&\lesssim \sum_{j=0}^{J-1} \sum_{\ell=0}^N \sum_{i=0}^\ell \rr^{-\kappa-j-2} t^{-N} \Big| \int_\R \chi_{<1}^{(N-\ell)}(|\tau|)\chi_{>1}^{(\ell-i)}(r|\tau|) r^{\ell-i}\tau^{-j-i-1} e^{it\tau} \dd \tau \Big| \\
			&\lesssim \rr^{-\kappa-2+N}\la t \ra^{-N}.
	\end{split} 
	\end{equation}
If $r \ge \frac{t}{2}$, we take $N =1$.  If $r < \frac{t}{2}$, we take $N= \kappa+2$. This yields
	\begin{equation}
		\Big| \int_\R \chi_{<1}(|\tau|) \hat{u}_{a} e^{it\tau} \dd \tau \Big| \lesssim \la t \ra^{-1} \la t - r \ra^{-\kappa-1}. \label{uaresult}
	\end{equation}

For the $\hat{u}_b$ term in \eqref{uabcd} we find
	\begin{equation}
	\begin{split}
		\Big| \int_\R \chi_{<1}(|\tau|) \hat{u}_b e^{it\tau} \dd \tau \Big| &\lesssim \frac{1}{\la t - r \ra^N} \sum_{m=0}^\kappa \left| \int \partial_\tau^N \Big(\chi_{<1}(|\tau|)\tau^m F_m(x)\Big)e^{i(t-\rr)\tau} \dd \tau \right| \\
			&\lesssim \la r \ra^{-1}\la t -r \ra^{-N}
	\end{split} \label{ubresult}
	\end{equation}
for any $N$.

Next we consider the $\hat{u}_c$ term in \eqref{uabcd}. Recall $\hat{u}_c = \tau^\kappa \epsilon(r,\tau) \enitr$ with $\epsilon(r,\tau)$ as in \eqref{logliketerm}. Here we define
    \[ \epsilon_j^m(r) := \int_0^r \bam(\rho) \partial_\rho \epsilon_j(\rho) \dd \rho \]
so that $ \epsilon_j(r) = \sum_{m \ge 0} \epsilon_j^m(r)$. We note $|\epsilon_j^m| \lesssim 1$ uniformly in $m$ and for $N \ge 1$ we have
    \begin{equation} \label{ederivbd}
		|\partial_r^N\epsilon_j^m(r)| \lesssim 2^{-mN} \mathbf{1}_{\{r \approx 2^m\}}(r). 
	\end{equation}
Furthermore, we see
	\begin{equation} \label{ecut}
		 \epsilon_j^m(r) \equiv \begin{cases}  0, \qquad r \ll 2^m \\
	  c_m, \qquad r \gg 2^m. \end{cases} 
	\end{equation}
Now we define
    \[ \varphi^m (r,\tau) := \tau^\kappa \rr^{-1} \epsilon_1^m(r\wedge |\tau|^{-1}) + \tau^{\kappa+1}\big[\epsilon_2^m(r\wedge |\tau|^{-1})-\epsilon_2^m(|\tau|^{-1})\big] \]
so that $\hat{u}_c = \sum_{m\ge 0} \varphi^m \enitr$. We fix $r$ and break the summation up into the cases $2^m \ll r$, $2^m \gg r$, and $2^m \approx r$ separately.

When $2^m \ll r $ we have $\varphi^m =\tau^\kappa \rr^{-1}\epsilon_1^m(|\tau|^{-1})$. Thus we find
    \begin{equation} \label{mllogr}
	\begin{split}
		\sum_{m \ll \log r} \Big|\int_\R \chi_{<1}(|\tau|)\varphi^m e^{i(t-\rr)\tau} \dd \tau \Big|
			&\lesssim \sum_{m \ll \log r} \rr^{-1} 2^{m(N- (\kappa+1))} |t-\rr|^{-N}\\
			&\lesssim \la t+r \ra^{-1} \la t-r \ra^{-1}.
	\end{split} 
	\end{equation}
To obtain the first inequality we change variables by $\tau \mapsto 2^{-m}\tau$ and use the fact that $\epsilon_1^m(2^m|\tau|^{-1}) = 0$ for $2^{-m}|\tau| \gg 2^{-m}$ along with the observations $|\epsilon_1^m(r)| \lesssim 1$ and \eqref{ederivbd}. The second inequality uses different arguments depending on the relative sizes of $r$ and $t$. If $t < 2r$, we break up the sum into $2^{-m}<|t-\rr|$ (where we take $N=\kappa+2$) and $|t-\rr| \le 2^m <r$ (where we take $N=0$). If $t \ge 2r$, we use the fact that $|t-\rr|\lesssim \la t+r \ra^{-1}$ and set $N=\kappa+2$.

When $2^m \gg r$, we have $\varphi^m = \tau^{\kappa+1}\epsilon_2^m(|\tau|^{-1})$. Thus we find
	\begin{equation} \label{mglogr}
	\begin{split}
		\sum_{m \gg \log r} \Big|\int_\R \chi_{<1}(|\tau|) \varphi^m e^{i(t-\rr)\tau} \dd \tau \Big|
			&\lesssim \sum_{m \gg \log r}  2^{m(N- (\kappa+2))}  |t-\rr|^{-N}\\
			&\lesssim \la t+r\ra^{-1}\la t-r\ra^{-\kappa-1}.
	\end{split} 
	\end{equation}
The first inequality is obtained the same way we found the first inequality in \eqref{mllogr}. The second inequality again uses different arguments depending on the relative sizes of $t$ and $r$. If $t \le 2r$ we have $\rr^{-1} \lesssim \la t+r\ra^{-1}$ and $\rr^{-1} \lesssim \la t-r\ra^{-1}$ so the inequality follows taking $N=0$. It $t >2r$ we have $|t-\rr|^{-1} \lesssim \la t+r\ra^{-1}$ and we break the sum into $2^m < |t-\rr|$ (where we take $N=\kappa+3$) and $2^m \ge |t-\rr|$ (where we take $N=0$).

When $2^m \approx r$, we have that $\varphi^m$ is composed of terms as in the $2^m \ll r$ and $2^m \gg r$ cases. Thus we argue as above with the added benefit that the summation is finite in $m$ to find
    \begin{equation} \label{melogr}
        \sum_{m \approx \log r} \Big|\int_\R \chi_{<1}(|\tau|) \varphi^m e^{i(t-\rr)\tau} \dd \tau \Big| \lesssim \la t+r\ra^{-1}\la t-r\ra^{-\kappa-1}.
    \end{equation}
Now \eqref{mllogr}, \eqref{mllogr}, and \eqref{melogr} yield
    \begin{equation} \label{ucresult}
        \left| \int_\R \chi_{<1}(|\tau|)\hat{u}_c(\tau)e^{it\tau} \dd \tau \right| \lesssim \la t+r\ra^{-1}\la t-r\ra^{-\kappa-1}.
    \end{equation}

Finally we handle the $\hat{u}_d$ term. We do our calculations for $h_{n}$ then apply the results to $h_{\nu-3\kappa-3}$. We will use Proposition \ref{propsmalltau}. Note $\tau^\ell \partial_\tau^\ell$ can be written as a linear combination of $(\tau\partial_\tau)^a$ with $1 \le a \le \ell$, so \eqref{smalltsmallrbnd} and \eqref{smalltlargerbnd} hold for $(\tau \partial_\tau)^\ell$ replaced by $\tau^\ell \partial_\tau^\ell$.

We split up the $\rr \lesssim |\tau|^{-1}$ and $\rr \gtrsim |\tau|^{-1}$ cases using cutoff functions. When $\rr \lesssim |\tau|^{-1}$, we first consider the case where $t \le 2r$. Here we have
	\begin{align*}
		\left| \int_\R \chi_{<1}(\rr|\tau|)\tau^{\kappa+1} (R_\tau h_n) e^{it\tau} \dd \tau  \right| \lesssim \rr^{-\kappa-2} \lesssim \la t \ra^{-\kappa-2}
	\end{align*}
by \eqref{smalltsmallrbnd}. When $2r < t$, we calculate
	\begin{align*}
		\left| \int_\R \chi_{<1}(\rr|\tau|)\tau^{\kappa+1} (R_\tau h_n) e^{it\tau} \dd \tau  \right| &\le \left| \int_0^{\frac{1}{t}} |\tau|^{\kappa+1} \dd \tau \right| + \left| \int_{\frac{1}{t}}^\infty \chi_{<1}(r|\tau|) \tau^{\kappa+1}R_\tau h e^{it\tau} \dd \tau \right|\\
			&\lesssim t^{-\kappa-2} 
	\end{align*}
for $16(\kappa+4) < n -20$. The inequality for the $\int_{\frac{1}{t}}^\infty \cdot \dd\tau$ term follows from integration by parts $\kappa + 4$ times.

Now we consider when $|\tau|^{-1} \lesssim \rr$. Take $\phi(x,\tau) := \chi_{<1}(|\tau|) \chi_{>1}(\rr|\tau|) \tau^{\kappa+1}(R_\tau h_n)e^{i\tau r}$ and use \eqref{smalltlargerbnd} to calculate
	\begin{equation}
	\begin{split}
		|\partial_\tau^N \phi| \lesssim |\tau|^{\kappa-N}\rr^{-1} + \sum_{i=0}^{N-1} \chi_{>1}^{(N-i)} (\rr|\tau|) \rr^{N-\kappa-1}
	\end{split} \label{derivbnds}
	\end{equation}
for $16N \le n-20$, since $\rr |\tau| \approx 1$ on the support of $\chi_{>1}^{(j)}(\rr |\tau|)$ for $j \ge 1$. If $2r \le t$, then $|t - \rr|^{-1} \lesssim \la t+ r \ra^{-1}$. We use \eqref{derivbnds} to find for $N \ge \kappa+2$ and $16N \le n-20$
	\begin{equation} \label{2ndpiece1}
	\begin{split}
		\Big| \int_\R &\phi e^{i\rr\tau} e^{i(t-\rr)\tau} \dd \tau \Big|\\
			&= t^{-1} \la t-r \ra^{-N} \Big| \int \partial_\tau \Big[ (\partial_\tau^N\phi)e^{-i\rr\tau}\Big] e^{it\tau} \dd \tau \Big|\\
			&\lesssim t^{-1} \la t-r \ra^{-N} \Big( \int_{r^{-1} < |\tau| < 1} | \partial_\tau^{N+1}\phi| \dd \tau  + \rr \int_\R |\partial_\tau^N \phi| \dd \tau \Big)\\
			&\lesssim  t^{-1} \la t-r \ra^{-N} \Big(\rr^{-1}\int_{\rr^{-1}}^\infty |\tau|^{\kappa-N-1} \dd \tau + \int_{|\tau|\approx \rr^{-1}} \rr^{N-\kappa} \dd \tau +  \int_{\rr^{-1}}^\infty |\tau|^{\kappa - N} \dd \tau \Big)\\
			&\lesssim \frac{1}{t \la t-r \ra^{\kappa+1}} \frac{\rr^{N-\kappa-1}}{\la t+ r \ra^{N-\kappa-1}}.
	\end{split} 
	\end{equation}

If $ t < 2r$ then $1 \lesssim \frac{\rr}{\la t+ r \ra}$, $\rr^{-1} \lesssim t^{-1}$, and $\rr^{-1} < |t-\rr|^{-1} < \infty$. We write
    \begin{equation} \label{theidea}
	\begin{split}
		\Big| \int_\R \phi e^{i\rr\tau} e^{i(t-\rr)\tau} \dd \tau \Big| &\le \int_0^{|t-\rr|^{-1}} \tau^{\kappa} \rr^{-1} \dd \tau + \Big| \int_{|t-\rr|^{-1}}^\infty \phi(\tau) e^{i(t-\rr)\tau} \dd \tau \Big| \\
			&\lesssim t^{-1} |t-\rr|^{-\kappa-1} + \rr^{-1}|t-\rr|^{-\kappa-1}\\
			&\lesssim \frac{1}{t \la t-r \ra^{\kappa+1}} \frac{\rr^{N-\kappa-1}}{\la t+ r \ra^{N-\kappa-1}}
    \end{split} 
	\end{equation}
for $2r >t$ and $16(\kappa+4) \le n-20$.

Combining \eqref{2ndpiece1}, and \eqref{theidea} then yields
	\begin{equation} 
		\Big|\int_\R \chi_{<1}(|\tau|) \tau^{\kappa+1} (R_\tau h_n) e^{it\tau} \dd \tau \Big| \lesssim \frac{1}{\la t + r \ra^{\kappa+2}} + \frac{1}{t \la t-r \ra^{\kappa+1}} \frac{\rr^{N-\kappa-1}}{\la t+ r \ra^{N-\kappa-1}} \label{udresult}
	\end{equation}
for $16(\kappa+4) \le n-20$. Thus $n = \min(\nu-J,\kappa+J) \ge 16\kappa+84$ for fixed $J \ge 1$ so that the results hold for $\nu \ge 31 \kappa + 168$.

The statement of the main theorem then follows for $|\tau| \lesssim 1$ by \eqref{uabcd}, \eqref{uaresult}, \eqref{ubresult}, \eqref{ucresult}, and \eqref{udresult}.
\end{section}

\printbibliography

@article{ bcmp,
  title={Localized energy for wave equations with degenerate trapping},
  author={Booth, Robert and Christianson, Hans and Metcalfe, Jason and Perry, Jacob},
  journal={arXiv preprint arXiv:1712.05853},
  year={2017}
}

@article{ bonyhaf2,
  title={Improved local energy decay for the wave equation on asymptotically Euclidean odd dimensional manifolds in the short range case},
  author={Bony, Jean-Francois and H{\"a}fner, Dietrich},
  journal={Journal of the Institute of Mathematics of Jussieu},
  volume={12},
  number={3},
  pages={635--650},
  year={2013},
  publisher={Cambridge University Press}
}

@article{mst,
  title={Local energy decay for scalar fields on time dependent non-trapping backgrounds},
  author={Metcalfe, Jason and Sterbenz, Jacob and Tataru, Daniel},
  journal={arXiv preprint arXiv:1703.08064},
  year={2017}
}

@article{tat2013,
  title={Local decay of waves on asymptotically flat stationary space-times},
  author={Tataru, Daniel},
  journal={American Journal of Mathematics},
  volume={135},
  number={2},
  pages={361--401},
  year={2013},
  publisher={Johns Hopkins University Press}
}

@article{kss,
  title={Almost global existence for some semilinear wave equations},
  author={Keel, Markus and Smith, Hart F and Sogge, Christopher D},
  journal={Journal d'Analyse Math{\'e}matique},
  volume={87},
  number={1},
  pages={265--279},
  year={2002},
  publisher={Springer}
}

@article{price,
  title={Nonspherical perturbations of relativistic gravitational collapse. I. Scalar and gravitational perturbations},
  author={Price, Richard H},
  journal={Physical Review D},
  volume={5},
  number={10},
  pages={2419},
  year={1972},
  publisher={APS}
}

@article{donn,
  title={A proof of Price's Law on \\ Schwarzschild black hole manifolds for all angular momenta},
  author={Donninger, Roland and Schlag, Wilhelm and Soffer, Avy},
  journal={Advances in Mathematics},
  volume={226},
  number={1},
  pages={484--540},
  year={2011},
  publisher={Elsevier}
}

@article{metato,
  title={Price's law on nonstationary space-times},
  author={Metcalfe, Jason and Tataru, Daniel and Tohaneanu, Mihai},
  journal={Advances in Mathematics},
  volume={230},
  number={3},
  pages={995--1028},
  year={2012},
  publisher={Elsevier}
}

@article{finster,
  title={Decay of solutions of the wave equation in the Kerr geometry},
  author={Finster, Felix and Kamran, Niky and Smoller, Joel and Yau, S-T},
  journal={Communications in Mathematical Physics},
  volume={264},
  number={2},
  pages={465--503},
  year={2006},
  publisher={Springer}
}

@article{dafrod,
  title={Lectures on black holes and linear waves},
  author={Dafermos, Mihalis and Rodnianski, Igor},
  journal={Clay Math. Proc},
  volume={17},
  pages={97--205},
  year={2013}
}

@article{dafrod2,
  title={A proof of the uniform boundedness of solutions to the wave equation on slowly rotating Kerr backgrounds},
  author={Dafermos, Mihalis and Rodnianski, Igor},
  journal={Inventiones Mathematicae},
  volume={185},
  number={3},
  pages={467--559},
  year={2011},
  publisher={Springer}
}

@article{mettat,
  title={Global parametrices and dispersive estimates for variable coefficient wave equations},
  author={Metcalfe, Jason and Tataru, Daniel},
  journal={Mathematische Annalen},
  volume={353},
  number={4},
  pages={1183--1237},
  year={2012},
  publisher={Springer}
}

@article{mametato,
  title={Strichartz estimates on Schwarzschild black hole backgrounds},
  author={Marzuola, Jeremy and Metcalfe, Jason and Tataru, Daniel and Tohaneanu, Mihai},
  journal={Communications in Mathematical Physics},
  volume={293},
  number={1},
  pages={37},
  year={2010},
  publisher={Springer}
}

@article{toh,
  title={Strichartz estimates on Kerr black hole backgrounds},
  author={Tohaneanu, Mihai},
  journal={Transactions of the American Mathematical Society},
  volume={364},
  number={2},
  pages={689--702},
  year={2012}
}

@inproceedings{bonyhaf1,
  title={Local energy decay for several evolution equations on asymptotically Euclidean manifolds},
  author={Bony, Jean-Fran{\c{c}}ois and H{\"a}fner, Dietrich},
  booktitle={Annales scientifiques de l'{\'E}cole Normale Sup{\'e}rieure},
  volume={45},
  number={2},
  pages={311--335},
  year={2012}
}

@article{mora,
  title={The decay of solutions of the exterior initial-boundary value problem for the wave equation},
  author={Morawetz, Cathleen S},
  journal={Communications on Pure and Applied Mathematics},
  volume={14},
  number={3},
  pages={561--568},
  year={1961},
  publisher={Wiley Online Library}
}

@article{bonyhaf3,
  title={The semilinear wave equation on asymptotically Euclidean manifolds},
  author={Bony, Jean-Fran{\c{c}}ois and H{\"a}fner, Dietrich},
  journal={Communications in Partial Differential Equations},
  volume={35},
  number={1},
  pages={23--67},
  year={2009},
  publisher={Taylor \& Francis}
}

@article{hmssz,
  title={On abstract Strichartz estimates and the Strauss conjecture for nontrapping obstacles},
  author={Hidano, Kunio and Metcalfe, Jason and Smith, Hart and Sogge, Christopher and Zhou, Yi},
  journal={Transactions of the American Mathematical Society},
  volume={362},
  number={5},
  pages={2789--2809},
  year={2010}
}

@inproceedings{metnaksog,
  title={Global existence of solutions to multiple speed systems of quasilinear wave equations in exterior domains},
  author={Metcalfe, Jason and Nakamura, Makoto and Sogge, Christopher D},
  booktitle={Forum Mathematicum},
  volume={17},
  number={1},
  pages={133--168},
  year={2005},
  organization={Walter de Gruyter}
}

@incollection{mettat1,
  title={Decay estimates for variable coefficient wave equations in exterior domains},
  author={Metcalfe, Jason and Tataru, Daniel},
  booktitle={Advances in phase space analysis of partial differential equations},
  pages={201--216},
  year={2009},
 publisher={Springer}
}

@article{sogwan,
  title={Concerning the wave equation on asymptotically Euclidean manifolds},
  author={Sogge, Christopher D and Wang, Chengbo},
  journal={Journal d'Analyse Mathematique},
  volume={112},
  number={1},
  pages={1--32},
  year={2010},
  publisher={Springer}
}

@article{ralston,
  title={Solutions of the wave equation with localized energy},
  author={Ralston, James V},
  journal={Communications on Pure and Applied Mathematics},
  volume={22},
  number={6},
  pages={807--823},
  year={1969},
  publisher={Wiley Online Library}
}

@article{blue2003,
author = "Blue, P. and Soffer, A.",
fjournal = "Advances in Differential Equations",
journal = "Adv. Differential Equations",
number = "5",
pages = "595--614",
publisher = "Khayyam Publishing, Inc.",
title = "Semilinear wave equations on the Schwarzschild manifold. I. Local decay estimates",
volume = "8",
year = "2003"
}

@article{dafermos2009red,
  title={The red-shift effect and radiation decay on black hole spacetimes},
  author={Dafermos, Mihalis and Rodnianski, Igor},
  journal={Communications on Pure and Applied Mathematics: A Journal Issued by the Courant Institute of Mathematical Sciences},
  volume={62},
  number={7},
  pages={859--919},
  year={2009},
  publisher={Wiley Online Library}
}

@article{andblue15,
 author = {Lars Andersson and Pieter Blue},
 journal = {Annals of Mathematics},
 number = {3},
 pages = {787--853},
 publisher = {Annals of Mathematics},
 title = {Hidden symmetries and decay for the wave equation on the Kerr spacetime},
 volume = {182},
 year = {2015}
}

@article{dafrodshlap16,
 author = {Mihalis Dafermos and Igor Rodnianski and Yakov Shlapentokh-Rothman},
 journal = {Annals of Mathematics},
 number = {3},
 pages = {787--913},
 publisher = {Annals of Mathematics},
 title = {Decay for solutions of the wave equation on Kerr exterior spacetimes III: The full subextremal case |a| < M},
 volume = {183},
 year = {2016}
}

@article{metsog06,
  title={Long-time existence of quasilinear wave equations exterior to star-shaped obstacles via energy methods},
  author={Metcalfe, Jason and Sogge, Christopher D},
  journal={SIAM journal on mathematical analysis},
  volume={38},
  number={1},
  pages={188--209},
  year={2006},
  publisher={SIAM}
}

@article{lintoh18,
author = {Hans Lindblad and Mihai Tohaneanu},
title = {Global existence for quasilinear wave equations close to Schwarzschild},
journal = {Communications in Partial Differential Equations},
volume = {43},
number = {6},
pages = {893-944},
year  = {2018},
publisher = {Taylor & Francis},
}

@article{luk2010null,
  title={The null condition and global existence for nonlinear wave equations on slowly rotating Kerr spacetimes},
  author={Luk, Jonathan},
  journal={Journal of the European Mathematical Society},
  volume={15},
  number={5},
  pages={1629-1700},
  year={2013}
}

@article{yang2015global,
  title={Global stability of solutions to nonlinear wave equations},
  author={Yang, Shiwu},
  journal={Selecta Mathematica},
  volume={21},
  number={3},
  pages={833--881},
  year={2015},
  publisher={Springer}
}

@article{smithsogge,
  title={Global Strichartz estimates for nontrapping perturbations of the Laplacian.},
  author={Smith, Hart F and Sogge, Christopher D},
  journal={Comm.Partial Differential Equations},
  volume={25},
  number={11-12},
  pages={2171-2183},
  year={2000}
}

@article{burq1998,
author = "Burq, Nicolas",
journal = "Acta Math.",
number = "1",
pages = "1-29",
publisher = "Institut Mittag-Leffler",
title = "D\'{e}croissance de l'\'{e}nergie locale de l'\'{e}quation des ondes pour le probl\`{e}me ext\'{e}rieur et absence de r\'{e}sonance au voisinage du r\'{e}el",
volume = "180",
year = "1998"
}

@article{sbierski2015characterisation,
  title={Characterisation of the energy of Gaussian beams on Lorentzian manifolds: with applications to black hole spacetimes},
  author={Sbierski, Jan},
  journal={Analysis \& PDE},
  volume={8},
  number={6},
  pages={1379--1420},
  year={2015},
  publisher={Mathematical Sciences Publishers}
}

@article{sterb, 
author={Sterbenz, Jacob}, 
journal={International Mathematics Research Notices}, 
title={Angular regularity and Strichartz estimates for the wave equation}, 
year={2005}, 
volume={2005}, 
number={4}, 
pages={187-231}
}

@article{alinhac,
author = {Alinhac, Serge},
year = {2006},
pages = {705-720},
title = {On the Morawetz-Keel-Smith-Sogge Inequality for the Wave Equation on a Curved Background},
volume = {42},
journal = {Publications of The Research Institute for Mathematical Sciences}
}

\end{document}